\newtheorem{lemma}{Lemma}[section]
\newtheorem{AL}{Lemma}[section]
\newtheorem{proposition}{Proposition}[section]
\newtheorem{theorem}{Theorem}[section]
\theoremstyle{definition}
\theoremstyle{remark}
\theoremstyle{remark}
\newtheorem{remark}{Remark}[section]
\numberwithin{equation}{section}
\newcommand{\R}{{\mathbb R}}
\newcommand{\la}{{\mathcal L}_a}
\definecolor{blu}{rgb}{0,0,1}
\newcommand{\rd}{{\mathbb R}^d}
\newtheorem{itheorem}{{\bf Theorem}}[section]
\title[The heat equation with inverse square potential] {On  inhomogeneous heat equation with\\ inverse square potential}
\author[D.  G.  Bhimani]{Divyang G. Bhimani}
\address{Department of Mathematics, Indian Institute of Science Education and Research, Dr. Homi Bhabha Road, Pune 411008, India}
\email{divyang.bhimani@iiserpune.ac.in}
\author[S. Haque] {Saikatul Haque}
\address{Harish-Chandra Research Institute, Chhatnag Road, Jhunsi, 
Prayagraj (Allahabad) 211019, India}
\email{saikatulhaque@hri.res.in}
\begin{document}
\subjclass[2000]{35B40,  35K67, 35B30, 35K57}
\keywords{Heat equation, Inverse square potential, Inhomogeneous nonlinearity, Self similar solution}

\begin{abstract} We study inhomogeneous 
heat equation  with inverse square potential, namely, 
\[\partial_tu +  \mathcal{L}_a u= \pm |\cdot|^{-b} |u|^{\alpha}u,\]
where $\mathcal{L}_a=-\Delta + a |x|^{-2}.$
We establish some fixed-time decay estimate for $e^{-t\mathcal{L}_a}$ associated with inhomogeneous nonlinearity $|\cdot|^{-b}$ in Lebesgue spaces.  We then  develop local theory in  $L^q-$ scaling  critical and super-critical regime  and small data global well-posedness in critical Lebegue spaces.  We further study asymptotic behaviour of global solutions by using self-similar solutions,  provided the initial data satisfies certain bounds. 
 Our method of proof is inspired from  the work of Slimene-Tayachi-Weissler (2017) where they considered the classical case,  i.e.  $a=0$.  
\end{abstract}
\maketitle
\tableofcontents
\section{Introduction}
We study heat equation associated  to inverse square potential:
\begin{equation}\label{0}
\begin{cases}
u_t(t,x) +\mathcal{L}_au(t,x)=\frac{\mu}{|x|^{b}}|u(t,x)|^{\alpha}u(t,x) \\
u(0,x)=\varphi(x)
\end{cases} (t,x) \in \R^+ \times \R^d
\end{equation}
where $d\geq2$, $\mu \in \{ \pm 1 \}$ and $b, \alpha>0.$  \eqref{0} is also known as  Hardy parabolic equation.
The  Schr\"odinger  operator
$$\mathcal{L}_{a}=-\Delta + \frac{a}{|x|^2},$$
with $a\geq- \left(\frac{d-2}{2} \right)^2$,  is initially defined
with domain $C_{c}^{\infty}(\R^d \setminus \{ 0\}).$ See \cite[Section 1.1]{Killipatal}. It is then 
extended   as an unbounded operator in $L^p(\rd)$ that 
generates  a positive semigroup $\{e^{-t\la}\}_{t\geq0}$ in $L^p(\rd)$ for  $s_1<d/p<s_2+2.$ Here, 
\[
s_1:=s_1(a)=\frac{d-2}{2}-\sqrt{\frac{(d-2)^2}{4}+a} \quad \text{and} \quad   s_2:=s_2(a)=\frac{d-2}{2}+\sqrt{\frac{(d-2)^2}{4}+a}
\]
 are the roots of $s^2-(d-2)s-a=0$, see  \cite[Theorems 1.1, 1.3]{metafune2016scale}.  
 Moreover, the semigroup $\{e^{-t\la}\}_{t\geq0}$  has the following smoothing effect (see also Remark \ref{rfu} below).

\begin{itheorem}[Decay estimate, Theorem 5.1 in \cite{ioku2016estimates}]\label{ste}
Assume $d\geq2$, $a\geq- \left(\frac{d-2}{2} \right)^2$  and 
\begin{equation}\label{r}
\tilde{s}_1=\max(s_1,0) \quad \text{and} \quad \tilde{s}_2=\min(s_2,d-2).
\end{equation}
Then,  for all $\tilde{s}_1<\frac{d}{q}\leq\frac{d}{p}<\tilde{s}_2+2$ and $t>0,$ we have 
\begin{equation}\label{d}
\|e^{-t\la}f\|_{L^q}\leq ct^{-\frac{d}{2}(\frac{1}{p}-\frac{1}{q})}\|f\|_{L^p}.
\end{equation}
\end{itheorem}
This smoothing effect  for $e^{-t\la}$ will play  a vital role in our analysis of  short and long time behaviour of (mild) solutions  of \eqref{0}.  We note that the inverse-square potential breaks space-translation symmetry for \eqref{0}. However,   it retains the scaling symmetry.   Specifically, 
if $u (t,x)$ solves \eqref{0}, then 
\begin{equation}\label{scl}
 u_{\lambda} (t,x) = \lambda^{\frac{2-b}{\alpha}} u(\lambda^{2} t, \lambda x) 
\end{equation}
 also solves \eqref{0} with data $u_\lambda(0).$ The Lebesgue space $L^q$ is invariant   under the above scaling   only when   $q=q_c:= \frac{d \alpha}{2-b}.$ We shall see that this constant $q_c$ will play important role in studying \eqref{0}. The problem \eqref{0} 
  is $L^q-$
\begin{equation*}
\begin{cases} \text{sub-critical} & \text{if} \   1\leq q <q_c\\
\text{critical}  & \text{if} \ q=q_c\\
\text{super-critical}  & \text{if} \ q> q_c.
\end{cases}
\end{equation*}

We point out that with $a=0$, the heat equation \eqref{0} is extentively studied, see  \cite{chikami2021well} and the references therein. Corresponding results in the context of Schr\"odinger equation, are investigated in  \cite{guzman2021scattering,bhimani2023sharp} and the references therein.
With $a\neq0$, the operator $\mathcal{L}_a$ is a mathematically intriguing borderline situation that can be found in a number of physical contexts,  including geometry,  combustion theory to the Dirac equation with Coulomb potential,  and to the study of perturbations of classic space-time metrics such as Schwarzschild and Reissner–Nordstr\"om.  See \cite{Killipatal, Zhang, Burq, Kalf, Luis} for detailed discussions.
In fact,   substantial  progress has been made to understand  well-poseness theory for  nonlinear Schr\"odinger and wave  equation associated to $\mathcal{L}_a$,  see for e.g. \cite{Jason, Haque,  Killip, Zhang} and references therein.   The mathematical interest in these equations with $a|x|^{-2}$ however comes mainly from the fact that the potential term is homogeneous of degree -2 and therefore scales exactly the same as the Laplacian.  On the other hand,  it appears that  we know very little   about  the  well-posedness results  for  heat equation  associated to $\mathcal{L}_a,$  even when $b=0$ in \eqref{0}. In this note,  we aim to  initiate  a systematic study  of well-posedness theory for  \eqref{0}.

In this article, by a solution to \eqref{0} we mean mild solution of \eqref{0},  that is,  a solution of the integral equation
\begin{equation*}
u(t)=e^{-t\la}\varphi+\mu\int_0^t e^{-(t-s)\la}(|\cdot|^{-b}|u(s)|^\alpha u(s))ds.
\end{equation*} 

We now state our first theorem.
\begin{theorem}[Local theory]\label{local} Let $d\geq2$, $a\geq-\frac{(d-2)^2}{4}$, $0\leq b<\min(2,d),$  $\tilde{s}_1,\tilde{s}_2$ be as in \eqref{r} and $0<\alpha<\frac{2-b}{\tilde{s}_1}$   and $\varphi\in L^q(\rd).$ 
\begin{enumerate}
\item \label{i}
Assume that  $$ \max\left\{\frac{d(\alpha+1)}{\tilde{s}_2+2-b},q_c\right\}<q<\frac{d}{\tilde{s}_1}.$$ 
Then there exists a maximal time $T_{\max}>0$ and a unique solution $u$ of \eqref{0} such that $$u \in C([0,T_{\max} ); L^q (\rd )).$$ Moreover,  blowup alternative holds,  i.e.  if $T_{\max}<\infty,$
 then  $\lim_{t\to T_{\max}} \|u(t)\|_q = \infty$.

\item \label{ii} Assume that  
$$q_c   \leq q<  \frac{d}{\tilde{s}_1},\qquad q>\frac{d}{\tilde{s}_2+2}$$ 
(cf. Figure \ref{f2}).   Then\begin{itemize}
\item[(i)] there exist $T > 0$ and a  solution $u$ of \eqref{0} such that $u \in C ([0, T ], L^q (\rd )).$ 
\item[(ii)] uniqueness in part (i) is guaranteed only among functions in
\begin{itemize}
\item $\{u \in C([0,T] ; L^q(\rd):\sup_{t\in[0,T ]} t^{\frac{d}{2}(\frac{1}{q}-\frac{1}{r})}\|u(t)\|_r<\infty\}$ where $r$ satisfies \eqref{2} and $q>q_c.$
\item $\{u \in C([0,T] ; L^q(\rd):\sup_{t\in[0,T ]} t^{\frac{d}{2}(\frac{1}{q_c}-\frac{1}{r})}\|u(t)\|_r\leq M \}$ for some $M>0$ and $r$ satisfies \eqref{6} and $q=q_c$.
\end{itemize} 
\end{itemize}   Moreover,  for the super-critical exponents, continuous dependency on data and the blowup alternative holds.

\item\label{iii}  In all the above cases, except where $q = q_c$, the maximal existence time of the solution, denoted by $T_{\max}$, depends on $\|\varphi\|_q$, not $\varphi$ itself.
\end{enumerate}
\end{theorem}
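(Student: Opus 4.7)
The approach is the standard Kato--Fujita contraction scheme applied to the mild formulation
\[
\Phi(u)(t)=e^{-t\mathcal{L}_a}\varphi+\mu\int_0^t e^{-(t-s)\mathcal{L}_a}\bigl(|\cdot|^{-b}|u(s)|^\alpha u(s)\bigr)\,ds,
\]
using as main inputs the linear smoothing estimate \eqref{d} of Theorem A and a careful treatment of the Hardy weight $|\cdot|^{-b}$. For each admissible $q$ I would introduce an auxiliary exponent $r\ge q$ and work in the complete metric space
\[
X_T=\bigl\{u\in C([0,T];L^q(\rd))\cap C((0,T];L^r(\rd)):\|u\|_{X_T}<\infty\bigr\}
\]
with norm $\|u\|_{X_T}=\sup_{[0,T]}\|u(t)\|_{L^q}+\sup_{(0,T]}t^{\beta}\|u(t)\|_{L^r}$, where $\beta=\tfrac{d}{2}(\tfrac1q-\tfrac1r)$ is exactly the decay exponent of Theorem A. This choice immediately yields $\|e^{-t\mathcal{L}_a}\varphi\|_{X_T}\lesssim\|\varphi\|_{L^q}$ provided $\tilde s_1<d/r\le d/q<\tilde s_2+2$; the role of $r$ is to buy room for a H\"older argument on the nonlinear term.

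The core step is the nonlinear estimate. Using $|\cdot|^{-b}\in L^{d/b,\infty}(\rd)$ together with H\"older's inequality in Lorentz spaces (or, equivalently, a direct split of $\rd$ into $|x|\le 1$ and $|x|>1$ followed by ordinary H\"older), one obtains
\[
\bigl\||x|^{-b}|u|^\alpha u\bigr\|_{L^p}\lesssim \|u\|_{L^r}^{\alpha+1},\qquad \tfrac1p=\tfrac{\alpha+1}{r}+\tfrac bd.
\]
Applying Theorem A to $e^{-(t-s)\mathcal{L}_a}$ with this $p$ and inserting the $X_T$-bound $\|u(s)\|_{L^r}\le s^{-\beta}\|u\|_{X_T}$ reduces the Duhamel term to a Beta-function integral $\int_0^t(t-s)^{-\theta_1}s^{-(\alpha+1)\beta}\,ds=c\,t^{1-\theta_1-(\alpha+1)\beta}$ with $\theta_1=\tfrac d2(\tfrac1p-\tfrac1q)$. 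In the range of part (1) and in the super-critical part of (2) the exponent $1-\theta_1-(\alpha+1)\beta$ is strictly positive, yielding a bound $T^{\gamma}\|u\|_{X_T}^{\alpha+1}$ for some $\gamma>0$, and a contraction then closes on a ball of any fixed radius once $T$ is taken small. At the critical value $q=q_c$ that exponent vanishes by design, the estimate becomes homogeneous, and one must instead restrict to a small ball in $X_T$; this is precisely why uniqueness in part (2)(ii) at $q=q_c$ is restricted to functions with $\sup_{[0,T]}t^{\beta}\|u(t)\|_r\le M$.

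The interplay between the four constraints --- $d/p<\tilde s_2+2$ for Theorem A on the nonlinearity, $\theta_1<1$ for time integrability, $\tfrac1p=\tfrac{\alpha+1}{r}+\tfrac bd$ for H\"older to close, and the scaling identity $1=\theta_1+(\alpha+1)\beta$ at $q=q_c$ --- is precisely what pins down the window $\max\{d(\alpha+1)/(\tilde s_2+2-b),q_c\}<q<d/\tilde s_1$ in (1), the window $q_c\le q<d/\tilde s_1$ with $q>d/(\tilde s_2+2)$ in (2), and the implicit definitions of $r$ in the referenced equations \eqref{2} and \eqref{6}. I expect the main obstacle of the proof to be precisely this numerical bookkeeping: verifying that in each regime a valid intermediate exponent $r$ exists, that all four inequalities above are simultaneously satisfied, and, in the critical case, that the restricted uniqueness class is genuinely the largest one in which the contraction is trapped.

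Once existence and uniqueness in $X_T$ are in hand, continuous dependence in the super-critical regime follows from the same nonlinear estimate applied to the difference of two solutions, bounded by $(\|u\|_{X_T}^\alpha+\|v\|_{X_T}^\alpha)\|u-v\|_{X_T}$ and hence Lipschitz in $\varphi$. The blow-up alternative and part (3) both come from the observation that, away from $q=q_c$, the fixed-point time $T$ depends on $\varphi$ only through $\|\varphi\|_{L^q}$: if $\|u(t)\|_{L^q}$ stayed bounded as $t\uparrow T_{\max}<\infty$ one could restart the scheme near $T_{\max}$ and extend past it, a contradiction. The exclusion of $q=q_c$ in (3) reflects the fact that in the critical case $T$ is governed by the smallness of $e^{-t\mathcal{L}_a}\varphi$ in the auxiliary norm, a quantity not controlled by $\|\varphi\|_{L^{q_c}}$ alone.
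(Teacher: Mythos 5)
Your overall architecture coincides with the paper's: a contraction for the Duhamel map in the two--norm space $C([0,T];L^q)\cap\{t^{\beta}\|u(t)\|_r<\infty\}$ with $\beta=\frac d2(\frac1q-\frac1r)$, the supercritical/critical dichotomy ($T$ small vs.\ small ball), and the restart argument for uniqueness, blow-up and part (3). The gap is in the step everything rests on: the inequality $\||x|^{-b}|u|^{\alpha}u\|_{L^p}\lesssim\|u\|_{L^r}^{\alpha+1}$ with $\frac1p=\frac{\alpha+1}{r}+\frac bd$ is false in Lebesgue spaces. Since $|x|^{-b}$ lies only in $L^{d/b,\infty}$, O'Neil's inequality puts the product in $L^{p,r/(\alpha+1)}\supsetneq L^{p}$, and for $b>0$ a function behaving like $|x|^{-d/r}\,|\log|x||^{-\gamma}$ near the origin with $\frac1r<\gamma\le\frac{1}{(\alpha+1)p}$ (possible because $(\alpha+1)p<r$) lies in $L^r$ while $|x|^{-b}|u|^{\alpha+1}\notin L^p$. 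The split into $|x|\le1$ and $|x|>1$ does not rescue a single-exponent bound either: the two pieces land in two different Lebesgue spaces and can only be recombined \emph{after} the semigroup acts. That is exactly what the paper's key Proposition \ref{est0} does: split $|\cdot|^{-b}=k_1+k_2$ with $k_1\in L^{m-\epsilon}$, $k_2\in L^{m+\delta}$, $m=d/b$, apply Theorem \ref{ste} at $t=1$ to each piece with two nearby source exponents and the same target $L^q$, and then transfer to all $t>0$ via the commutation $e^{-t\la}D_\lambda=D_\lambda e^{-\lambda^2 t\la}$ (itself proved by a semigroup-uniqueness argument, since no explicit kernel is available for $a\neq0$); this yields $\|e^{-t\la}(|\cdot|^{-b}f)\|_q\lesssim t^{-\frac d2(\frac1p-\frac1q)-\frac b2}\|f\|_p$ under $\tilde s_1<\frac dq\le b+\frac dp<\tilde s_2+2$, and with it your exponent bookkeeping is exactly right, since your $\theta_1=\frac d2(\frac{\alpha+1}{r}-\frac1q)+\frac b2$. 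So your plan closes only after you either prove this weighted smoothing estimate or upgrade Theorem \ref{ste} to Lorentz spaces by real interpolation; as written, the nonlinear estimate does not hold.

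Two smaller points you leave unaddressed. First, part (\ref{i}) asserts \emph{unconditional} uniqueness in $C([0,T_{\max});L^q)$; a fixed point in your two-norm space only gives uniqueness in that restricted class. The paper gets the stronger statement by running the argument in $L^q$ alone (taking in effect $r=q$, which is admissible precisely when $q>\frac{d(\alpha+1)}{\tilde s_2+2-b}$ — this is where that threshold comes from — and invoking Weissler's abstract local-existence theorem with the locally Lipschitz map $K_t(\varphi)=e^{-t\la}(|\cdot|^{-b}|\varphi|^{\alpha}\varphi)$ whose Lipschitz constant $t^{-\frac{d\alpha}{2q}-\frac b2}M^{\alpha}$ is integrable since $q>q_c$). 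Second, at $q=q_c$ you correctly identify that existence hinges on smallness of $\sup_{0<t\le T}t^{\beta}\|e^{-t\la}\varphi\|_r$, but for \emph{arbitrary} data in $L^{q_c}$ this smallness must be produced; the paper does it by approximating $\varphi$ in $L^{q_c}$ by $\psi\in L^{q_c}\cap L^{r}$ and using $t^{\beta}\|e^{-t\la}\psi\|_r\le T^{\beta}\|\psi\|_r$. Both points are fixable, but they belong in the proof.
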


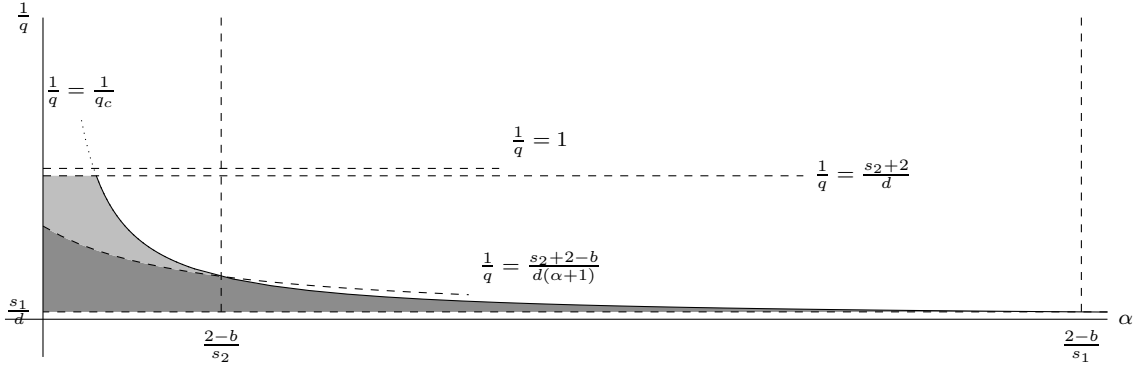
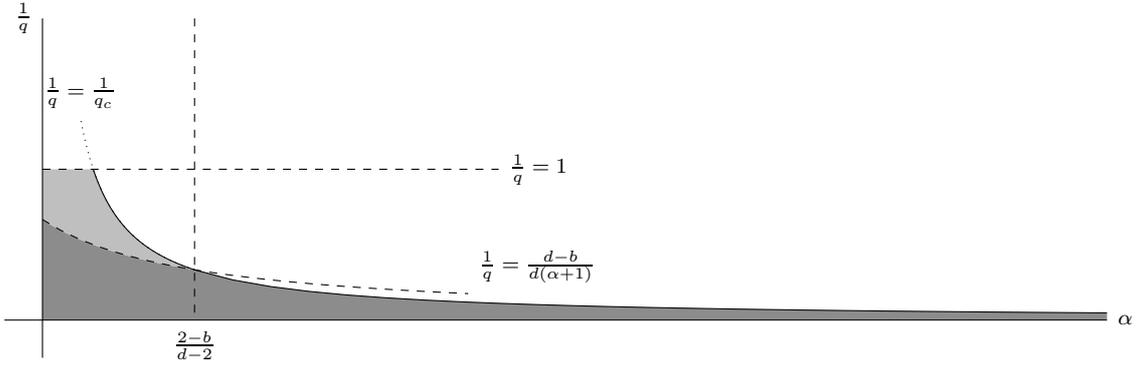
\begin{figure}
\subfigure[The case  $d=3$, $a=-\frac{1}{8}$, $b=1$] {
\begin{tikzpicture}[scale=2]

\fill [gray!90!white](0,1/6-2^.5/12)--(0,5/3-2*2^.5/3)--plot[domain=0:4-2*2^.5]  ({\x},{(6+2^.5)/(12*\x+12)})--plot[domain=4-2*2^.5:4+2*2^.5]  ({\x},{(1/3)*(1/\x)})--(4-2*2^.5,1/6-2^.5/12);
\fill [gray!50!white](0,1/6-2^.5/12)--(0,10/12+2^.5/12)--(1/3,10/12+2^.5/12)--plot[domain=20/49-2*2^.5/49:4-2*2^.5]  ({\x},{(1/3)*(1/\x)})--plot[domain=4-2*2^.5:0]  ({\x},{(6+2^.5)/(12*\x+12)});
\draw[][] (-.25,0)--(7,0) node[anchor=west] {\tiny{$\alpha$}};
\draw[][-] (0,-.25)--(0,2) node[anchor=east] {\tiny{$\frac{1}{q}$}};
\draw[dashed](0,1)--(3,1) node[anchor=south west] {\tiny{$\frac{1}{q}=1$}};
\draw[dashed](4+2*2^.5,2)--(4+2*2^.5,0) node[anchor=north] {\tiny{$\frac{2-b}{s_1}$}};
\draw[dashed](4-2*2^.5,2)--(4-2*2^.5,0) node[anchor=north] {\tiny{$\frac{2-b}{s_2}$}};
\draw[dotted][-] [domain=1:.25] plot ({\x}, {(1/3)*(1/\x)})node[anchor=south] {\tiny{{$\frac{1}{q}=\frac{1}{q_c}$}}};
\draw[][-] [domain=1:20/49-2*2^.5/49] plot ({\x}, {(1/3)*(1/\x)});
\draw[][-] [domain=7:1] plot ({\x}, {(1/3)*(1/\x)});
\draw[dashed][] [domain=0:2.8] plot ({\x}, {(6+2^.5)/(12*\x+12)})node[anchor=south west] {\tiny{{$\frac{1}{q}=\frac{s_2+2-b}{d(\alpha+1)}$}}};
\draw[dashed](7,1/6-2^.5/12)--(0,1/6-2^.5/12) node[anchor=east] {\tiny{$\frac{s_1}{d}$}};
\draw[dashed](0,10/12+2^.5/12)--(5,10/12+2^.5/12) node[anchor=west] {\tiny{$\frac{1}{q}=\frac{s_2+2}{d}$}};
\end{tikzpicture}
}
\subfigure[The case $d=3$, $a\geq0$, $b=1$]  {
\begin{tikzpicture}[scale=2]
\fill [gray!90!white](0,0)--(0,2/3)--plot[domain=0:1]  ({\x},{2/(3*\x+3)})--plot[domain=1:7]({\x},{1/(3*\x)})--(7,0);
\fill [gray!50!white](0,2/3)--(0,1)--(1/3,1)--plot[domain=1/3:1]  ({\x},{1/(3*\x)})--plot[domain=1:0]  ({\x},{2/(3*\x+3)});
\draw[][] (-.25,0)--(7,0) node[anchor=west] {\tiny{$\alpha$}};
\draw[][-] (0,-.25)--(0,2) node[anchor=east] {\tiny{$\frac{1}{q}$}};
\draw[dashed](0,1)--(3,1) node[anchor=west] {\tiny{$\frac{1}{q}=1$}};
\draw[dashed](1,2)--(1,0) node[anchor=north] {\tiny{$\frac{2-b}{d-2}$}};
\draw[dotted][-] [domain=1:.25] plot ({\x}, {(1/3)*(1/\x)})node[anchor=south] {\tiny{{$\frac{1}{q}=\frac{1}{q_c}$}}};
\draw[dashed][] [domain=0:2.8] plot ({\x}, {(2/3)*1/(\x+1)})node[anchor=south west] {\tiny{{$\frac{1}{q}=\frac{d-b}{d(\alpha+1)}$}}};
\draw[][-] [domain=7:1] plot ({\x}, {(1/3)*(1/\x)});
\draw[][-] [domain=1:1/3] plot ({\x}, {(1/3)*(1/\x)});
\end{tikzpicture}
}
\caption{\tiny{Local well-posedness in mere $L^q(\rd)$ occurs in the darkly shaded (open) region by part \eqref{i} of Theorem \ref{local}. Local existence in $L^q(\rd)$ is guaranteed by part \eqref{ii} of Theorem \ref{local}  in the total shaded (dark \& light) region along with the open segment on boundary which is part of the curve $\frac{1}{q}=\frac{1}{q_c}$.}}  \label{f2}
\end{figure}

As far as we know,  Theorem \ref{local} is new for $a\neq 0.$
In \cite[Theorem 1.1]{slimene2017well},  Slimene,  Tayachi and Weissler  proved 
Theorem \ref{local} when $a=0.$  Our method of proof is based on standard contraction argument and inspired from the work in \cite{slimene2017well}.  The main key ingredient  to prove Theorem \ref{local} is Proposition \ref{est0}.  In order to prove Proposition \ref{est0},   Theorem \ref{ste} and scaling properties of $e^{-t\la}$ 
play a vital role.

\begin{remark}\label{r1} For $a=0$,   \eqref{d} is valid for the end points,  see Remark \ref{rfu} below.   Thus, in this case,  whenever there is a strict inequality $<$ involving $s_1$ in Theorem \ref{local} it can be relaxed to non strict one $\leq$. 
\end{remark}

\begin{remark} The case $a=b=0$ corresponds to standard nonlinear heat equation.  This has been extensively studied in the literature  since the pioneering work  done in  early 80s, see \cite{weissler1980local, brezis1996nonlinear}. 
In this situation,  for the sub-critical case,  Weissler \cite{weissler1980local} proved local well-poseness for \eqref{0}   in $L^q(\R^d)$  for $q> q_c\geq 1.$  For sub-critical case,   there is no general theory of existence,  see \cite{weissler1980local, brezis1996nonlinear}. In fact, Haraux-Weissler \cite{haraux1982non} proved  that if  $1<q_c <  \alpha +2$, then  there is a global solution (with zero initial data) in $L^q(\R^d)$ for  $1\leq q < q_c.$ But no such  solution exists when $\alpha +2 < q_c.$  For critical case,  i.e.  $q=q_c$  the solution exists globally in time for small initial data.  
\end{remark}
\begin{remark} For the sub-critical  exponents,  i.e.  for $q<q_c,$ classical inhomogeneous heat equation,  i.e.  \eqref{0} with $a=0,$ is known to be  ill-posed in $L^q(\R^d)$.   We believe that similar result hold for \eqref{0}.  However,  shall not pursue this issue in the present paper.
\end{remark}

After achieving local well-posedness with a contraction mapping,  one then finds the following lower estimate for blow-up in the super critical case $q>q_c$ as in the classical case $a=b=0$.
\begin{theorem} [Lower blow-up rate]  Assume that $q> \max (1, q_c)$ and $T_{\max}< \infty,$ where $T_{\max}$ is the existence time of  the resulting maximal solution of \eqref{0}.  Then
under the hypotheses of Theorem \ref{local},  we have 
       $$\|u(t)\|_q \gtrsim (T_{max}-t)^{\frac{ d}{2q}-\frac{2-b}{2\alpha}} ,\quad \forall t\in[0,T_{max}).$$
\end{theorem}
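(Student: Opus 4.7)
The plan is to combine the scaling symmetry \eqref{scl} with part \eqref{iii} of Theorem \ref{local}. Since $q>\max(1,q_c)$ we are in the super-critical regime, so by Theorem \ref{local}\eqref{iii} the maximal existence time $T_{\max}(\varphi)$ depends only on $\|\varphi\|_q$. Applying the local well-posedness statement once to data of unit $L^q$-norm produces a universal time $\tau_0>0$ such that every $\psi\in L^q(\rd)$ with $\|\psi\|_q\le 1$ generates a mild solution on at least $[0,\tau_0]$.

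The next step is to upgrade this into a quantitative scaling law for $T_{\max}$. If $u$ solves \eqref{0} with data $\varphi$, then $u_\lambda(t,x)=\lambda^{(2-b)/\alpha}u(\lambda^{2}t,\lambda x)$ solves \eqref{0} with data $\varphi_\lambda(x)=\lambda^{(2-b)/\alpha}\varphi(\lambda x)$, and the change of variables $t\mapsto \lambda^{2}t$ gives $T_{\max}(\varphi_\lambda)=\lambda^{-2}T_{\max}(\varphi)$ together with
\[
\|\varphi_\lambda\|_q=\lambda^{\gamma}\|\varphi\|_q,\qquad \gamma:=\frac{2-b}{\alpha}-\frac{d}{q}.
\]
The assumption $q>q_c$ is exactly what guarantees $\gamma>0$, so one may choose $\lambda=\|\varphi\|_q^{-1/\gamma}$ to normalize $\|\varphi_\lambda\|_q=1$. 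Combining with $\tau_0$ then yields the \emph{existence-time estimate}
\[
T_{\max}(\varphi)\ge \tau_0\,\|\varphi\|_q^{-2/\gamma}.
\]

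To conclude, restart the Cauchy problem at an arbitrary $t\in[0,T_{\max})$ with data $u(t)\in L^q(\rd)$. By uniqueness in Theorem \ref{local}\eqref{i} the restarted maximal solution agrees with $u$, so its maximal existence time equals $T_{\max}-t$. The previous inequality gives $T_{\max}-t\ge \tau_0\|u(t)\|_q^{-2/\gamma}$, and inverting (permissible because $\gamma>0$) yields
\[
\|u(t)\|_q\gtrsim (T_{\max}-t)^{-\gamma/2}=(T_{\max}-t)^{\frac{d}{2q}-\frac{2-b}{2\alpha}},
\]
which is the claim. The only real care needed is the sign of $\gamma$ (positive precisely in the super-critical range), which is what lets us normalize the data downwards; the rest is bookkeeping. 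The hypothesis $q>1$ enters implicitly to place us in a Banach space suitable for the contraction argument of Theorem \ref{local}, and the blow-up alternative of Theorem \ref{local}\eqref{i} ensures that the restarted solution cannot be extended past $T_{\max}$, so it is genuinely maximal.
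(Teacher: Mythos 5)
Your argument is correct, and it lands on the same exponent the paper intends: the paper states this theorem without proof, remarking only that it follows from the contraction mapping "as in the classical case", the relevant quantitative fact being recorded inside the proof of Theorem \ref{local}\eqref{ii}, namely that the guaranteed local existence time satisfies $T\sim \rho^{-\alpha\left(1-\frac{d\alpha}{2q}-\frac{b}{2}\right)^{-1}}$ with $\rho=\|\varphi\|_q$. Since $1-\frac{d\alpha}{2q}-\frac{b}{2}=\frac{\alpha}{2}\left(\frac{2-b}{\alpha}-\frac{d}{q}\right)=\frac{\alpha\gamma}{2}$, this is exactly your bound $T\gtrsim\|\varphi\|_q^{-2/\gamma}$; where the paper would simply read the power off the contraction estimate, you re-derive it from the scaling \eqref{scl} together with a uniform existence time $\tau_0$ for data of norm at most one, and then run the standard restart-at-$t$ argument. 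That scaling detour is a legitimate and arguably more robust way to get the power law (it does not require remembering the explicit form of $T(\rho)$), and the rest of your proof (normalization $\lambda=\|\varphi\|_q^{-1/\gamma}$, positivity of $\gamma$ exactly when $q>q_c$, inversion) is correct bookkeeping.

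Two small caveats, neither fatal. First, Theorem \ref{local}\eqref{iii} should not be read as saying the \emph{maximal} existence time is a function of $\|\varphi\|_q$ alone (data of equal norm can blow up at different times); what is true, and what you actually use, is that the \emph{guaranteed local} existence time from the contraction depends only on the norm, which gives your $\tau_0$ and the inequality $T_{\max}(\varphi)\geq \tau_0\|\varphi\|_q^{-2/\gamma}$. Second, in the super-critical range covered by Theorem \ref{local}\eqref{ii} uniqueness is only conditional, so the identifications $T_{\max}(\varphi_\lambda)=\lambda^{-2}T_{\max}(\varphi)$ and "restarted maximal time $=T_{\max}-t$" should be carried out in the conditional uniqueness class $\sup_t t^{\beta}\|u(t)\|_r<\infty$; this is harmless because the auxiliary norm of $u_\lambda$ equals $\lambda^{\gamma}$ times that of $u$ (using $\beta=\frac{d}{2}(\frac{1}{q}-\frac{1}{r})$), so scaled and restarted solutions remain in the class, but it is worth saying explicitly since you cite the unconditional uniqueness of part \eqref{i}.
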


On the other hand in the critical case $q=q_c$, we achieve `small' data global well-posedness where the smallness is in the sense described in the theorem below:
\begin{theorem}[Small data global existence]\label{global} Let $d \geq 2,$ $0\leq b<\min(2,d)$ and $\frac{d}{\tilde{s}_2+2}<q_c<\frac{d}{\tilde{s}_1}$ i.e. $$\frac{2-b}{\tilde{s}_2+2}<\alpha<\frac{2-b}{\tilde{s}_1}.$$
Then we have the following.\begin{enumerate}
\item\label{global-1}  If $\varphi \in L^{q_c} (\rd )$ and $\|\varphi\|_{q_c}$ is sufficiently small, then $T_{max} (\varphi)= \infty.$
\item\label{global-2} If $\varphi\in \mathcal{S}'(\rd)$ such that $|\varphi|\leq c( 1+|\cdot|^2)^{-\sigma/2}$, 
$c $ sufficiently small and $\sigma>\frac{2-b}{\alpha}$, then $T_{max}(\varphi) = \infty$.
\item\label{global-3}
Let $\varphi\in \mathcal{S}' (\rd )$ be such that $|\varphi| \leq c|\cdot|^{- \frac{2-b}{\alpha}}$ , for $c$  sufficiently small. Then there exists a global 
time solution of (1.5), $u\in C([0,\infty);L^q(\rd))$ for all $q\in(q_c,\frac{d}{\tilde{s}_1})$. Moreover $u(t)\to\varphi$ in $\mathcal{S}'(\rd)$ as $t\to0$.
\end{enumerate}
\end{theorem}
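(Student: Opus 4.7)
The three parts of the theorem all reduce to a contraction mapping argument built on Theorem A together with Proposition \ref{est0} (the weighted nonlinear estimate for the semigroup), but the functional framework changes with the hypothesis. The common ingredient is the nonlinear operator
\[
\mathcal{N}(u)(t) = e^{-t\la}\varphi + \mu \int_0^t e^{-(t-s)\la}\bigl(|\cdot|^{-b}|u(s)|^\alpha u(s)\bigr)\, ds,
\]
which I want to show is a contraction on a small ball of an appropriate function space. The critical scaling in each case forces the contraction estimate to take the form $\|\mathcal{N}(u) - \mathcal{N}(v)\|_X \leq C(M^\alpha + c^\alpha) \|u-v\|_X$ with $M, c$ small, so the smallness in the hypothesis is exactly what is needed to close the fixed point.

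For part \eqref{global-1}, I would work in
\[
X = \Bigl\{u \in C([0,\infty); L^{q_c}(\rd)) : \sup_{t>0} t^{\frac{d}{2}(\frac{1}{q_c}-\frac{1}{r})}\|u(t)\|_r \leq M\Bigr\},
\]
for $r$ satisfying the relation \eqref{6} coming from the local theory. The decay estimate \eqref{d} bounds $\|e^{-t\la}\varphi\|_{L^{q_c}}$ and $\|e^{-t\la}\varphi\|_{L^r}$ by $\|\varphi\|_{q_c}$ (with the correct power of $t$), and Proposition \ref{est0} controls the Duhamel term by $M^{\alpha+1}$; choosing $\|\varphi\|_{q_c}$ and $M$ small closes the loop and yields $T_{\max}=\infty$. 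Part \eqref{global-2} is a direct consequence of part \eqref{global-1} once I observe that the hypothesis $\sigma > (2-b)/\alpha = d/q_c$ is exactly the threshold that puts $(1+|\cdot|^2)^{-\sigma/2}$ in $L^{q_c}(\rd)$, whence
\[
\|\varphi\|_{q_c} \leq c\,\bigl\|(1+|\cdot|^2)^{-\sigma/2}\bigr\|_{L^{q_c}} \leq C_\sigma\, c,
\]
which is as small as we like for small $c$.

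Part \eqref{global-3} is the real work, because the borderline homogeneous majorant $|\cdot|^{-(2-b)/\alpha}$ lies outside $L^{q_c}(\rd)$ and only matches the critical scaling. My plan is to fix any $q \in (q_c, d/\tilde{s}_1)$ and work in the self-similar space
\[
Y_q = \Bigl\{u \in C((0,\infty); L^q(\rd)) : \sup_{t>0}\, t^{\frac{d}{2}(\frac{1}{q_c}-\frac{1}{q})}\|u(t)\|_q \leq M\Bigr\}.
\]
The key a priori bound is
\[
\|e^{-t\la}\varphi\|_q \leq C c\, t^{-\frac{d}{2}(\frac{1}{q_c}-\frac{1}{q})},
\]
which I get from positivity/monotonicity of $\{e^{-t\la}\}_{t\geq 0}$ together with the scaling identity $e^{-t\la}(|\cdot|^{-\gamma})(x) = t^{-\gamma/2}(e^{-\la}(|\cdot|^{-\gamma}))(x/\sqrt t)$ for the homogeneous function $|x|^{-(2-b)/\alpha}$, reduced to $t=1$ via Theorem A (which gives membership of $e^{-\la}(|\cdot|^{-(2-b)/\alpha})$ in $L^q$ for $q \in (q_c, d/\tilde{s}_1)$). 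Then Proposition \ref{est0} estimates the nonlinear term in $Y_q$ with the correct scaling power, and contraction closes provided $c$ is small. Continuity at $t=0$ in $L^q$ needs a separate argument; $\mathcal{S}'$-convergence $u(t)\to\varphi$ as $t\to 0$ follows by testing against a Schwartz function, using the known $\mathcal{S}'$ convergence $e^{-t\la}\varphi \to \varphi$ on the linear part and showing that the Duhamel term, paired with a test function, is bounded by $\int_0^t s^{-\frac{(\alpha+1)d}{2}(\frac{1}{q_c}-\frac{1}{q})} ds \cdot M^{\alpha+1}$ times the decay of $\psi$, which vanishes as $t\to 0$ by the critical scaling identity.

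\textbf{Main obstacle.} The delicate step is in part \eqref{global-3}: obtaining the pointwise/integral scaling estimate on $e^{-t\la}\varphi$ when $\varphi$ is only dominated by the critical homogeneous weight, and then verifying that the solution produced in the scale-invariant space $Y_q$ indeed converges to $\varphi$ in $\mathcal{S}'$ as $t\to 0$. The homogeneous majorant is not in any $L^p(\rd)$, so I cannot appeal to Theorem A directly on $\varphi$, and the inverse-square potential $a|x|^{-2}$ breaks translation invariance; the scaling symmetry \eqref{scl} of $e^{-t\la}$ is the only available substitute and has to be used carefully together with positivity to dominate $e^{-t\la}|\varphi|$ by a multiple of $e^{-t\la}(|\cdot|^{-(2-b)/\alpha})$.
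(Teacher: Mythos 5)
Parts \eqref{global-1} and \eqref{global-2} of your proposal are fine and essentially the paper's argument: both reduce to checking that $\sup_{t>0}t^{\beta}\|e^{-t\la}\varphi\|_r$ is small (the paper does this by quoting its general small-data result, Theorem \ref{global2}, rather than re-running the contraction, but the estimates are the same), and your observation that $\sigma>\frac{2-b}{\alpha}=\frac{d}{q_c}$ puts $(1+|\cdot|^2)^{-\sigma/2}$ in $L^{q_c}$ is exactly the paper's reduction of \eqref{global-2} to \eqref{global-1}.

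The gap is in part \eqref{global-3}, where you propose to run the contraction directly in $Y_q$ for \emph{each} $q\in(q_c,\frac{d}{\tilde{s}_1})$, measuring the nonlinearity in $L^{q/(\alpha+1)}$ and the solution only in the weighted $L^q$ norm. This single-exponent scheme does not cover the stated range of $q$. Two things break. First, Proposition \ref{est0} applied with the pair $(\frac{q}{\alpha+1},q)$ requires $b+\frac{d(\alpha+1)}{q}<\tilde{s}_2+2$, i.e. $q>\frac{d(\alpha+1)}{\tilde{s}_2+2-b}$; when $\alpha\leq\frac{2-b}{\tilde{s}_2}$ this threshold lies strictly above $q_c$ (e.g. $d=3$, $a\geq0$, $b=1$, $\alpha=\tfrac12$ gives $q_c=1.5$ but threshold $2.25$), so for $q$ just above $q_c$ the key smoothing estimate is simply not available in your framework. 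Second, the Duhamel bound $M^{\alpha+1}\int_0^t(t-s)^{-\frac{d\alpha}{2q}-\frac{b}{2}}s^{-\beta(q)(\alpha+1)}ds$ requires $\beta(q)(\alpha+1)<1$ with $\beta(q)=\frac{2-b}{2\alpha}-\frac{d}{2q}$, and this fails for large $q$ in the admissible range: e.g. $d=3$, $a\geq0$, $b=\tfrac12$, $\alpha=1$ gives $\beta(q)(\alpha+1)=\tfrac32-\tfrac3q\geq1$ for all $q\geq6$, while the theorem claims every $q\in(2,\infty)$. This is precisely why the paper runs the fixed point only once, in the scale-invariant space built on a single auxiliary exponent $r$ satisfying \eqref{6} (whose existence is what the hypothesis $\frac{d}{\tilde{s}_2+2}<q_c<\frac{d}{\tilde{s}_1}$ guarantees), and then upgrades to all $q\in[r,\frac{d}{\tilde{s}_1})$ by the a priori estimate of Lemma \ref{23}, in which the Duhamel integral is taken over $[t/2,t]$ so the $s=0$ singularity never enters, iterating over a finite chain of exponents and interpolating; this also yields one single solution enjoying all the $L^q$ bounds simultaneously, a point your per-$q$ construction leaves open. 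A further (smaller) gap: you assert that Theorem A gives $e^{-\la}(|\cdot|^{-\frac{2-b}{\alpha}})\in L^q$, but that function lies in no Lebesgue space, so Theorem A cannot be applied to it directly; one must first split it as $\chi_{\{|x|\leq1\}}|\cdot|^{-\frac{2-b}{\alpha}}+\chi_{\{|x|>1\}}|\cdot|^{-\frac{2-b}{\alpha}}\in L^s+L^\sigma$ with $s<q_c<\sigma$ and apply Theorem A to each piece with its own exponent pair (as the paper does); your use of positivity of $e^{-t\la}$ and of the scaling identity to pass from this to the bound on $e^{-t\la}\varphi$ is correct.
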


Theorem \ref{global} is proved using  more general  Theorem \ref{global2} below.    This is inspired from  \cite[Theorem 4.1]{slimene2017well} (that deal with $a=0$ case),  in  which  they uses idea from the earlier work \cite[Theorem 6.1]{cazenave1998asym} (that deals with $a=b=0$ case).

A  solution of  \eqref{0} is \textit{self-similar} if $u_\lambda = u$ for all $\lambda > 0$ where $u_\lambda$ is defined  in \eqref{scl}.  In \cite{Hirose,  Wang},  authors have established the existence of radially symmetric self-similar solutions and later in \cite[Theorem 1.4]{slimene2017well}  authors have proved the  self-similar solutions that are not necessarily symmetric  for classical  inhomogeneous heat equation.  In the next theorem,  we establish  similar result in the presence of  inverse square potential.
\begin{theorem}[Self-similar solutions]\label{selfsimilar}
Let $d\geq2,$ $0 < b < \min(2, d )$ and $\frac{d}{\tilde{s}_2+2}<q_c<\frac{d}{\tilde{s}_1}$ i.e. $$\frac{2-b}{\tilde{s}_2+2}<\alpha<\frac{2-b}{\tilde{s}_1}.$$
Let $\varphi(x) = \omega(x)|x|^{-\frac{2-b}{\alpha}}$, where $\omega \in L^\infty(\rd)$
 is homogeneous of degree $0$ and $\|\omega\|_\infty$  is sufficiently small. Then there exists a global mild self-similar solution $u_S$ of \eqref{0} and $u_S(t)\to\varphi$ in $\mathcal{S}'(\rd)$ as $t\to 0$.
\end{theorem}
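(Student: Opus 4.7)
The plan is to realize $u_S$ as the mild solution produced by the small-data global existence machinery of Theorem~\ref{global}\eqref{global-3}, and then to deduce self-similarity from uniqueness in a scaling-invariant class. The crucial observation is that an $\omega$ which is homogeneous of degree $0$ forces $\varphi(x)=\omega(x)|x|^{-\frac{2-b}{\alpha}}$ to be homogeneous of degree $-\frac{2-b}{\alpha}$, and therefore $\varphi$ is a \emph{fixed point} of the scaling on initial data:
\[
\varphi_\lambda(x):=\lambda^{\frac{2-b}{\alpha}}\varphi(\lambda x)=\lambda^{\frac{2-b}{\alpha}}\omega(\lambda x)|\lambda x|^{-\frac{2-b}{\alpha}}=\omega(x)|x|^{-\frac{2-b}{\alpha}}=\varphi(x),
\]
using homogeneity of $\omega$ at the last step.

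First, since $|\varphi(x)|\leq \|\omega\|_\infty |x|^{-\frac{2-b}{\alpha}}$, for $\|\omega\|_\infty$ small enough the hypothesis of Theorem~\ref{global}\eqref{global-3} is satisfied. Hence there exists a global mild solution $u_S\in C([0,\infty);L^q(\R^d))$ for every $q\in(q_c,\tfrac{d}{\tilde s_1})$ with $u_S(t)\to\varphi$ in $\mathcal{S}'(\R^d)$ as $t\to 0$. This already gives the existence of a candidate global solution attaining the prescribed initial data.

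Second, I would show $u_S$ is self-similar, i.e.\ $(u_S)_\lambda=u_S$ for every $\lambda>0$, where $(u_S)_\lambda(t,x)=\lambda^{\frac{2-b}{\alpha}}u_S(\lambda^2 t,\lambda x)$. A direct computation (or the scaling symmetry of \eqref{0}) shows that $(u_S)_\lambda$ is again a mild solution of \eqref{0} with initial data $\varphi_\lambda=\varphi$. Furthermore, the Kato-type norm
\[
\| v\|_\K :=\sup_{t>0} t^{\frac{d}{2}(\frac{1}{q_c}-\frac{1}{r})}\|v(t)\|_{L^r}
\]
(for the exponent $r$ used in the proof of Theorem~\ref{global}) is scaling invariant, so $\|(u_S)_\lambda\|_\K=\|u_S\|_\K$, which is small by the construction in Theorem~\ref{global}\eqref{global-3}. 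The uniqueness statement in part~\eqref{ii} of Theorem~\ref{local} (critical case, in the small-norm Kato class) then forces $(u_S)_\lambda=u_S$ on $(0,\infty)\times\R^d$. Hence $u_S$ is self-similar. The convergence $u_S(t)\to\varphi$ in $\mathcal{S}'$ is inherited from Theorem~\ref{global}\eqref{global-3}.

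The main obstacle is the second step: one needs an honest uniqueness statement in a scaling-invariant class at the critical exponent, since $\varphi\notin L^{q_c}(\R^d)$ (it has the wrong homogeneity to be integrable near $0$ or at infinity). The way to circumvent this is the same trick used in \cite{cazenave1998asym,slimene2017well}: do not work in $L^{q_c}$ at all, but run the contraction on the weighted space $\{u:\|u\|_\K<\infty\}$ where the fixed-point map is a strict contraction on a small ball whenever $\|e^{-t\la}\varphi\|_\K$ is small. Here one uses the decay estimate of Theorem~\ref{ste} together with $q_c<\tfrac{d}{\tilde s_1}$ to show $\|e^{-t\la}\varphi\|_\K\lesssim \|\omega\|_\infty$, which closes the argument. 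Uniqueness in the small ball of the $\K$-norm then applies to both $u_S$ and $(u_S)_\lambda$ and yields their identity, completing the proof.
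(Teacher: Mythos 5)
Your proposal is correct and follows essentially the same route as the paper: existence comes from the small-data global theory (Theorem \ref{global} \eqref{global-3}, i.e.\ the contraction of Theorem \ref{global2} in the scale-invariant Kato class $\sup_{t>0}t^\beta\|\cdot\|_r$, verified via $|\varphi|\leq\|\omega\|_\infty|\cdot|^{-\frac{2-b}{\alpha}}$ and homogeneity), and self-similarity follows from $\varphi_\lambda=\varphi$, the scaling invariance of that norm, and uniqueness of the fixed point in the small ball $B_M$ — which is exactly how the paper argues, including your correct observation that uniqueness must be taken in the Kato class rather than in $L^{q_c}$ since $\varphi\notin L^{q_c}(\rd)$.
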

Using this self-similar solution,  the next theorem gives the information about  the asymptotic behaviour of  global solutions achieved by Theorem \ref{global}, provided the data satisfies certain bounds.
\begin{theorem}[Asymptotic behaviour]\label{asym}
Let $d\geq2,$  $0 < b < \min(2, d )$ and $\frac{d}{\tilde{s}_2+2}<q_c<\frac{d}{\tilde{s}_1}$ i.e. $\frac{2-b}{\tilde{s}_2+2}<\alpha<\frac{2-b}{\tilde{s}_1}$ and
\[
\frac{2-b}{\alpha}\leq\sigma<\tilde{s}_2+2.
\]
Let $\varphi\in \mathcal{S}'(\rd)$ be such that
\[
|\varphi(x)|\leq c( 1+|x|^2)^{-\sigma/2}, \quad \forall x\in\rd
\]
for $c > 0$ sufficiently small,  and 
\[|\varphi(x)|= \omega(x)|x|^{-\sigma}, \quad \forall |x|\geq A \]
for some constant $A>0$ and  some $\omega\in L^\infty(\rd)$ homogeneous of degree $0$ with $\|\omega\|_\infty$ sufficiently small.

Let $u,u_{\mathcal{S}}$ be the unique solutions to \eqref{0} with data $\varphi$, $\omega(x)|x|^{-\frac{2-b}{\alpha}}$ given by Theorem \ref{global}, Theorem \ref{selfsimilar} respectively. Then
\begin{enumerate}
\item\label{asym1} if $\sigma=\frac{2-b}{\alpha}$ and $q\in[r,\frac{d}{\tilde{s}_1})$  there exists $\delta > 0$ such that  \begin{equation}\label{p10}
\|u(t)-u_{\mathcal{S}}(t)\|_q \leq Ct^{-(\frac{\sigma}{2}-\frac{d}{2q})-\delta}, \text{ for all }t>0
\end{equation}
where $C$ is a positive constant.
In particular, if $\omega \neq 0$, there exist $c_1, c_2>0$  such that for $t$ large
\[
c_1 t^{-(\frac{2-b}{2\alpha}-\frac{d}{2q})} \leq \|u(t)\|_q \leq  c_2 t^{-(\frac{\sigma}{2}-\frac{d}{2q})}.
\]

\item\label{asym2}  if $\frac{2-b}{\alpha}<\sigma<(2-b)(\frac{\tilde{s}_2+2-b}{\tilde{s}_1\alpha}-1)$ and $q\in[r_1,\frac{d}{\tilde{s}_1})$ where $r_1$ as in Lemma \ref{11}, there exists $\delta > 0$ such that 
$$\|u(t)-e^{-t\la}(\omega(x)|x|^{-\sigma})\|_q \leq Ct^{-(\frac{\sigma}{2}-\frac{d}{2q})-\delta}
$$
for all $t$ large enough. 
In particular, if $\omega \neq 0$, there exist $c_1, c_2>0$  such that for $t$ large
\[
c_1 t^{-(\frac{\sigma}{2}-\frac{d}{2q})} \leq \|u(t)\|_q \leq  c_2 t^{-(\frac{\sigma}{2}-\frac{d}{2q})}.
\]
\end{enumerate}
\end{theorem}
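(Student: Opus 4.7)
The plan is to compare $u$ with the self-similar solution $u_S$ (in part \eqref{asym1}) or with the linear evolution $v(t):=e^{-t\mathcal{L}_a}(\omega|\cdot|^{-\sigma})$ (in part \eqref{asym2}) via Duhamel's formula, and to run a fixed-point argument in a weighted space so as to extract an additional $t^{-\delta}$ beyond the baseline scale-invariant decay $t^{-(\sigma/2-d/(2q))}$ satisfied separately by $u$, $u_S$ and $v$.

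For part \eqref{asym1}, set $\varphi_S:=\omega(x)|x|^{-(2-b)/\alpha}$ and $w:=u-u_S$, so that
\[
w(t)=e^{-t\mathcal{L}_a}(\varphi-\varphi_S)+\mu\int_0^t e^{-(t-s)\mathcal{L}_a}\bigl[|\cdot|^{-b}(|u|^\alpha u-|u_S|^\alpha u_S)\bigr]\,ds.
\]
The hypothesis $\varphi(x)=\omega(x)|x|^{-\sigma}$ for $|x|\geq A$, combined with $\sigma=(2-b)/\alpha$, forces $\varphi-\varphi_S$ to be supported in $B_A$; the singularity $|x|^{-(2-b)/\alpha}$ of $\varphi_S$ near the origin then places $\varphi-\varphi_S\in L^{p_1}(\rd)$ for every $p_1<q_c$. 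Choosing such a $p_1$ with $\tilde s_1<d/p_1\leq d/q$, Theorem \ref{ste} yields
\[
\|e^{-t\mathcal{L}_a}(\varphi-\varphi_S)\|_q\leq C\,t^{-(\sigma/2-d/(2q))-\delta},\qquad \delta:=\tfrac{d}{2p_1}-\tfrac{\sigma}{2}>0.
\]
The nonlinear integral is then handled by the pointwise bound $\bigl||u|^\alpha u-|u_S|^\alpha u_S\bigr|\lesssim |w|(|u|^\alpha+|u_S|^\alpha)$, H\"older's inequality with the weighted $L^r$ decay on $u,u_S$ supplied by Theorems \ref{global} and \ref{selfsimilar}, and one more application of Theorem \ref{ste}. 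A standard contraction on
\[
X:=\Bigl\{w\in C((0,\infty);L^q):\sup_{t>0}t^{(\sigma/2-d/(2q))+\delta}\|w(t)\|_q<\infty\Bigr\}
\]
closes the estimate and produces \eqref{p10}. The lower bound on $\|u(t)\|_q$ follows by triangle inequality together with the self-similar identity $u_S(t,x)=t^{-\sigma/2}u_S(1,x/\sqrt t)$, which gives $\|u_S(t)\|_q\gtrsim t^{-(\sigma/2-d/(2q))}$ whenever $\omega\not\equiv 0$.

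Part \eqref{asym2} follows the same scheme with $u_S$ replaced by $v$; the difference $w=u-v$ obeys
\[
w(t)=e^{-t\mathcal{L}_a}(\varphi-\omega|\cdot|^{-\sigma})+\mu\int_0^t e^{-(t-s)\mathcal{L}_a}\bigl[|\cdot|^{-b}|u|^\alpha u\bigr]\,ds,
\]
with $\varphi-\omega|\cdot|^{-\sigma}$ again compactly supported in $B_A$. The upper bound $\sigma<(2-b)\bigl(\tfrac{\tilde s_2+2-b}{\tilde s_1\alpha}-1\bigr)$ is precisely what guarantees that the auxiliary exponents $(p_1,r_1)$ furnished by Lemma \ref{11} simultaneously make the nonlinear Duhamel integral convergent at $s=0$ and $s=t$ and allow the extra $t^{-\delta}$ to propagate through the iteration. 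The main obstacle throughout is the combinatorial bookkeeping of these auxiliary Lebesgue exponents under the admissibility constraint $\tilde s_1<d/p<\tilde s_2+2$ of Theorem \ref{ste}, the integrability of the compactly supported data and of the nonlinearity at the origin and at infinity, and the convergence of the Duhamel integral at both endpoints; producing a strictly positive $\delta$ amounts to verifying that the resulting admissible region of exponents is nonempty, which is exactly what the quantitative hypotheses on $\sigma,\alpha,b$ in Theorem \ref{asym} are tuned to ensure.
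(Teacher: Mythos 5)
Your part \eqref{asym1} starts the same way the paper does (write $\psi=\omega|\cdot|^{-\frac{2-b}{\alpha}}$, observe $\varphi-\psi$ is supported in $\{|x|\le A\}$ and lies in $L^{s}$ for every $s<q_c$, and convert this via Theorem \ref{ste} into the extra factor $t^{-\delta}$ for the linear part), and the absorption of the nonlinear difference for $q=r$ is exactly the stability estimate \eqref{9} of Theorem \ref{global2}. The gap is in your claim that ``a standard contraction on $X$'' in the norm $\sup_t t^{\beta(q)+\delta}\|w(t)\|_q$, with the Duhamel integral taken over all of $(0,t)$, closes for every $q\in[r,\frac{d}{\tilde s_1})$. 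It does not: if you measure $u,u_{\mathcal S},w$ in $L^q$ the $s$-singularity is $s^{-(\alpha+1)\beta(q)-\delta}$, and $(\alpha+1)\beta(q)$ can exceed $1$ (e.g.\ $a\ge0$, $\alpha$ near $\frac{2-b}{d}$, $q$ near $\frac{d}{\tilde s_1}$); if instead you keep the nonlinearity in $L^{r}$-based norms and smooth up to $L^q$, the $(t-s)$-exponent $\frac d2(\frac{\alpha+1}{r}-\frac1q)+\frac b2$ can exceed $1$ for $q$ large. This is precisely why the paper proves the estimate first for $q=r$ only, and then bootstraps by writing $u(t)-u_{\mathcal S}(t)=e^{-\frac t2\la}(u(t/2)-u_{\mathcal S}(t/2))+\mu\int_{t/2}^{t}\cdots$, using the $L^r\to L^q$ smoothing for the first term and the a priori bound \eqref{g24} on $(t/2,t)$, where the $s=0$ singularity never enters. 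Some such two-step device is needed; as stated your iteration fails at the endpoints of the time integral for large $q$.

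Part \eqref{asym2} is not ``the same scheme'', and this is the more serious gap. For $w=u-e^{-t\la}(\omega|\cdot|^{-\sigma})$ the Duhamel term is $\mu\int_0^t e^{-(t-s)\la}(|\cdot|^{-b}|u|^\alpha u)\,ds$: it contains no factor of $w$, so there is no contraction to run and no smallness to absorb; one must show directly that this integral decays like $t^{-\beta_1(q)-\delta}$ with $\beta_1(q)=\frac{\sigma}{2}-\frac{d}{2q}$ strictly bigger than the scale-invariant rate $\frac{2-b}{2\alpha}-\frac{d}{2q}$. The decay of $u$ supplied by Theorem \ref{global} (or Theorem \ref{global2}\eqref{g24}) is only the scale-invariant rate, and feeding it into the integral reproduces that same rate — it can never produce the required gain. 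The paper's actual mechanism is to re-construct the solution by a fixed point in the intersection space carrying two weighted norms, $\sup_t t^{\beta_1}\|u(t)\|_{r_1}\le M$ and $\sup_t t^{\beta_2}\|u(t)\|_{r_2}\le M$ (Theorem \ref{asym 1}, with the exponent set of Lemma \ref{11}), and then to interpolate between the two rates with a $\delta$-tuned parameter $\theta_\delta$ (Lemma \ref{13}) so that the Duhamel integral is simultaneously convergent and of size exactly $t^{-\beta_1-\delta}$ in $L^{r_1}$; passing to general $q\in[r_1,\frac d{\tilde s_1})$ and to the stated two-sided bounds again requires the $t/2$-splitting together with the a priori Lemmas \ref{c1} and \ref{23} (Theorem \ref{b2}), and only for $t\ge t_q$. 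Your proposal compresses all of this into ``bookkeeping of auxiliary exponents'' and cites Lemma \ref{11} for a pair $(p_1,r_1)$ it does not provide; the double-norm construction and the $\delta$-dependent interpolation are the key ideas of this part and are missing. (Minor further points: in part \eqref{asym1} you also need $\delta$ small enough that $\beta(r)(\alpha+1)+\delta<1$ before invoking \eqref{9}, and the hypothesis on $\sigma$ in part \eqref{asym2} enters as the condition $\alpha_1>\frac{\tilde s_1\alpha}{\tilde s_2+2-b-\tilde s_1\alpha}$ with $\sigma=\frac{2-b}{\alpha_1}$, not merely as nonemptiness of an exponent region you leave unspecified.)
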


\begin{remark}
As remarked earlier, see Remark \ref{r1}, when $a=0$, one can actually gets the above results even for $q=\frac{d}{\tilde{s}_1}=\infty$, recovering the result in \cite{{slimene2017well}}.
\end{remark}

\begin{remark}
In the case \eqref{asym1}, since $\frac{\sigma}{2}-\frac{d}{2q}=\frac{2-b}{2\alpha}-\frac{d}{2q}>0$, it follows from \eqref{p10}, that for $t$ large, the solution is close to a nonlinear self-similar solution. Thus in  this case, the solution has  nonlinear behaviour near $t=\infty$. By similar reasoning, in the case \eqref{asym2}, the solution  has linear behaviour near $t=\infty$, which can be related to scattering phenomenon. In both the case the final conclusion says $\|u(t)\|_q\to0$ as $t\to\infty$.
\end{remark}

\begin{remark}
The method of proof of Theorem \ref{asym} \eqref{asym2} differs at certain stage from the one in \cite{slimene2017well} (that deals with $a=0$ case) due the fact that in the case $a\neq0$, one do not have the decay estimate \eqref{d} for $q=\infty$, see Remark \ref{r2}. 
\end{remark}
\begin{remark}\label{rfu} \
\begin{enumerate}
\item  Notice that 
\begin{equation*}
\tilde{s}_1=\tilde{s}_1(a)=\begin{cases}
s_1(a)&\text{ for }a\in[-\frac{(d-2)^2}{4},0)\\
0 &\text{ for }a\in[0,\infty)
\end{cases},
\qquad
\tilde{s}_2=\tilde{s}_2(a)=\begin{cases}
s_2(a)&\text{ for }a\in[-\frac{(d-2)^2}{4},0)\\
d-2 &\text{ for }a\in[0,\infty)
\end{cases}.
\end{equation*}
\item\label{rfu2}
 For $a=0,$ 
Theorem \ref{ste} says \eqref{d} valid for $1<p\leq q<\infty$.  However,  since $e^{-t\mathcal{L}_0}f=e^{t\Delta}f=k_t*f$ where $k_t(x)=(4\pi t)^{-d/2}\exp(-|x|^2/(4t))$,  the inequality \eqref{d}  holds for all $1\leq p\leq q\leq\infty$: In fact 
by Young's inequality, $\|e^{t\Delta}f\|_q\leq\|k_t\|_r\|f\|_p=t^{-\frac{d}{2r'}}\|k_1\|_r\|f\|_p$  with $\frac{1}{r}=1+\frac{1}{q}-\frac{1}{p}$ i.e. $\frac{1}{r'}=\frac{1}{p}-\frac{1}{q}$  (assumption $p\leq q$ is  to make sure %
 $r\geq1$).
\item 
For $-\frac{(d-2)^2}{4}\leq a<0$, 
from Theorem \ref{ste}, \eqref{d} is valid for $1<\frac{d}{s_2+2}<p\leq q<\frac{d}{s_1}<\infty$. Therefore in this case, we get results valid with more restrictions (involving $s_1,s_2$) compared to the case $a=0$.  For example, for the local theory, compare the regions   for the case (a) and the case (b)  in Figure \ref{f2}.

\item On the other hand, for $a>0$, 
from Theorem \ref{ste}, \eqref{d} is valid for all $1<p\leq q<\infty$. 
Thus in this case the results match with the results for case $a=0$ (and do not have restrictions in involving $s_1,s_2$) except the end point restrictions. 
\end{enumerate} 
\end{remark}

The paper is organized as follows.  In Section \ref{keat},  we prove key estimate 
for $e^{-t\la}(|\cdot|^{-b} f)$.   In Section \ref{es},  we prove Theorems \ref{local}, \ref{global} and \ref{selfsimilar}.  In Section \ref{AB},  we prove Theorem \ref{asym}. \\
\noindent
\textit{Notations}.  The notation $A \lesssim B $ means $A \leq cB$ for some universal constant $c > 0.$ The Schwartz space is denoted by  $\mathcal{S}(\mathbb R^{d})$,  and the space of tempered distributions is  denoted by $\mathcal{S'}(\mathbb R^{d}).$ 
\section{Key  Ingradient}\label{keat}
In order to incorporate the inhomogeneous nonlinearity,  we first establish  some fixed-time estimate for $e^{-t\mathcal{L}_a}|\cdot|^{-b}$ in the next proposition.
\begin{proposition}\label{est0}
Let $d\geq2$, $a\geq-\frac{(d-2)^2}{4}$, $0\leq b<d$ and
 \begin{equation}\label{p1}
\tilde{s}_1<\frac{d}{q}\leq b+\frac{d}{p}<\tilde{s}_2+2
\end{equation} 
then for $t>0$
 \[
 \|e^{-t\la}(|\cdot|^{-b}f)\|_{L^q}\leq ct^{-\frac{d
 }{2}(\frac{1}{p}-\frac{1}{q})-\frac{b}{2}}\|f\|_{L^p}.
 \]
\end{proposition}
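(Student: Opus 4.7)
The plan has two main ingredients: a scaling reduction to the fixed time $t=1$, and a spatial decomposition of the singular weight $|y|^{-b}$.

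\textbf{Scaling.} The operator $\la$ is homogeneous of degree $-2$ under the dilation $S_\mu f(x)=f(\mu x)$, which gives the intertwining $e^{-t\la}S_{1/\sqrt t}=S_{1/\sqrt t}e^{-\la}$. Writing $|y|^{-b}f(y)=t^{-b/2}(S_{1/\sqrt t}g)(y)$ with $g(z)=|z|^{-b}f_t(z)$ and $f_t(z)=f(\sqrt t\,z)$, this identity yields
\[
\bigl(e^{-t\la}(|\cdot|^{-b}f)\bigr)(x)=t^{-b/2}\bigl(e^{-\la}(|\cdot|^{-b}f_t)\bigr)(x/\sqrt t).
\]
Taking the $L^q$ norm in $x$ produces a factor $t^{d/(2q)-b/2}$, while $\|f_t\|_p=t^{-d/(2p)}\|f\|_p$ supplies the remaining $t^{-d/(2p)}$; multiplying gives the claimed decay $t^{-d/2(1/p-1/q)-b/2}$. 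So it suffices to prove $\|e^{-\la}(|\cdot|^{-b}g)\|_q\leq C\|g\|_p$ under \eqref{p1}.

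\textbf{Spatial split at $t=1$.} Write $|y|^{-b}=|y|^{-b}\chi_{B_1}+|y|^{-b}\chi_{B_1^c}$ with $B_1$ the unit ball. The interior factor $|y|^{-b}\chi_{B_1}$ lies in $L^{\rho}$ for every $\rho<d/b$, so H\"older gives $\|(|y|^{-b}\chi_{B_1})g\|_{p_1}\leq C\|g\|_p$ with $1/p_1=1/p+1/\rho>1/p+b/d$. Taking $\rho$ close enough to $d/b$ from below places $d/p_1$ in $(d/q,\tilde{s}_2+2)$, and Theorem \ref{ste} delivers $\|e^{-\la}((|y|^{-b}\chi_{B_1})g)\|_q\leq C\|g\|_p$. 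The exterior factor $|y|^{-b}\chi_{B_1^c}$ lies in $L^{\rho}$ for every $\rho>d/b$, and the same scheme produces $1/p_2<1/p+b/d$; it works whenever the middle inequality $d/q\leq b+d/p$ is strict, by picking $\rho$ slightly above $d/b$ so that $d/p_2\in[d/q,\tilde{s}_2+2)$ and invoking Theorem \ref{ste}. Summing the two contributions completes the strict case.

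\textbf{Main obstacle.} The delicate point is the endpoint $d/q=b+d/p$ in the exterior piece, since $|y|^{-b}\chi_{B_1^c}\notin L^{d/b}$ and the naive H\"older-into-Lebesgue misses the critical exponent by a hair. The natural remedy is to observe that $|y|^{-b}\chi_{B_1^c}\in L^{d/b,\infty}$, apply H\"older in Lorentz spaces to place $(|y|^{-b}\chi_{B_1^c})g$ in $L^{q,p}$, and upgrade Theorem \ref{ste} via real interpolation between two nearby admissible Lebesgue endpoints sandwiching $q$ to get $e^{-\la}\colon L^{q,p}\to L^q$. Combined with the interior bound, this closes the argument even on the boundary; everything else is a routine rescaling plus H\"older calculation.
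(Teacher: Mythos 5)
Your scaling reduction and your treatment of the strict case $\frac{d}{q}<b+\frac{d}{p}$ coincide with the paper's own proof: the same dilation intertwining $e^{-t\la}D_\lambda=D_\lambda e^{-\lambda^2 t\la}$ (the paper derives it from uniqueness for the Banach-valued ODE; you assert it, which is harmless), and the same splitting $|\cdot|^{-b}=\chi_{\{|x|\le1\}}|\cdot|^{-b}+\chi_{\{|x|>1\}}|\cdot|^{-b}$ with the interior piece in $L^{\rho}$ for $\rho<d/b$, the exterior piece in $L^{\rho}$ for $\rho>d/b$, followed by H\"older and Theorem \ref{ste}. Up to the endpoint, this is exactly the published argument and it is correct.

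The endpoint $\frac{d}{q}=b+\frac{d}{p}$ is where your proposal has a genuine gap, and it cannot be closed. At the endpoint one has $p>q$ (since $\frac1q=\frac bd+\frac1p$ with $b>0$), so you are asking for $e^{-\la}\colon L^{q,p}\to L^{q}$ with second Lorentz index larger than the first. You cannot produce this by real interpolation ``sandwiching $q$'': Theorem \ref{ste} only gives $e^{-\la}\colon L^{p_0}\to L^{q}$ for $p_0\le q$, and there is no bound from $L^{q_1}$ into $L^q$ with $q_1>q$ (the semigroup gains integrability but does not improve decay at infinity), so interpolating the available bounds only yields source spaces $L^{\sigma,s}$ with $\sigma<q$, never $L^{q,p}$. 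Worse, the asserted mapping is false: already for $a=0$, take $h(y)=|y|^{-d/q}(\log|y|)^{-\gamma}\chi_{\{|y|\ge2\}}$ with $\frac1p<\gamma\le\frac1q$; then $h\in L^{q,p}\setminus L^{q}$, while $e^{\Delta}h(x)\ge c\,h(x)$ for $|x|\ge3$ (integrate the Gaussian kernel over the unit ball centred at $x$), so $e^{\Delta}h\notin L^{q}$. Writing $h=|\cdot|^{-b}f$ with $f(y)=|y|^{-d/p}(\log|y|)^{-\gamma}\chi_{\{|y|\ge2\}}\in L^{p}$ shows that the endpoint inequality of the proposition itself fails with an $L^q$ target (note the time exponent vanishes there), so no argument can rescue it. For what it is worth, the paper's own proof also covers only the strict case, since the choice of $\delta>0$ in \eqref{p2} requires $\frac1q<\frac bd+\frac1p$, and only the strict form of \eqref{p1} is ever used later in the paper. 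The correct course is therefore to prove and use the proposition with $\frac dq<b+\frac dp$, or, if you want the equality case, to weaken the conclusion to the Lorentz target $L^{q,p}$: your O'Neil step combined with the self-map $e^{-\la}\colon L^{q,p}\to L^{q,p}$ (obtained by interpolating two admissible diagonal bounds $L^{q_0}\to L^{q_0}$, $L^{q_1}\to L^{q_1}$ with $q_0<q<q_1$) does give that weaker statement.
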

\begin{proof}
Assume first that $t=1$. Put $m=\frac{d}{b}.$ Since $\frac{1}{q}<\frac{1}{m}+\frac{1}{p}<\frac{\tilde{s}_2+2}{d},$  we can choose $\epsilon,\delta>0$ so that 
\begin{equation}\label{p2}
\frac{1}{q}<\frac{1}{m+\delta}+\frac{1}{p}<\frac{1}{m-\epsilon}+\frac{1}{p}<\frac{\tilde{s}_2+2}{d}.
\end{equation}
 Split $|\cdot|^{-b}$ as follows
\[
|\cdot|^{-b}=k_1+k_2 \quad\text{with }k_1\in L^{m-\epsilon}(\rd), \quad k_2\in L^{m+\delta}(\rd),
\]
for example one may take
$$k_1= \chi_{ \{|x|\leq 1 \}} |\cdot|^{-b},  \quad k_2= \chi_{ \{|x|> 1 \}} |\cdot|^{-b}.    $$ 
Let $$\frac{1}{r_1}=\frac{1}{m-\epsilon}+\frac{1}{p}, \quad \frac{1}{r_2}=\frac{1}{m+\delta}+\frac{1}{p}$$ and note 
 that $\tilde{s}_1<\frac{d}{q}<\frac{d}{r_i}<\tilde{s}_2+2$. 
By Theorem \ref{ste} and H\"older's inequality,  we have 
\begin{eqnarray*}
 \|e^{-\la}(|\cdot|^{-b}f)\|_q&\leq&\|e^{-\la}(k_1f)\|_q+\|e^{-\la}(k_2f)\|_q\\
 &\lesssim&\|k_1f\|_{r_1}+\|k_2f\|_{r_2}\\
 &\leq&(\|k_1\|_{m-\epsilon}+\|k_2\|_{m+\delta})\|f\|_{p}\lesssim\|f\|_p.
\end{eqnarray*}
Thus the case $t=1$ is proved.

 
 For $\varphi\in\mathcal{S}(\rd)$, set $D_\lambda\varphi=\varphi(\lambda\ \cdot)$. Then we claim  that  $$e^{-t\la}(D_\lambda\varphi) =D_\lambda(e^{-\lambda^2t\la}\varphi).$$
 In fact if $v(t)=e^{-t\la}(D_\lambda\varphi)$, then $v$ solves
\begin{equation}\label{si}
\begin{cases} v_t=\la v\\
v(0,x)=D_\lambda\varphi(x)=\varphi(\lambda x).
\end{cases}
\end{equation} 
Put  $w(t)=u(\lambda^2t)$ where $u(t)=e^{-t\la}\varphi$.  Then
 we have 
\begin{eqnarray*}
(D_\lambda w(t))_t & = & D_\lambda w_t(t)\\
& = & \lambda^2D_\lambda u_t(\lambda^2t)\\
&= & \lambda^2D_\lambda \la u(\lambda^2t) \quad  (\because  u_t=\la u)\\
&= & \lambda^2D_\lambda \la w(t)\\
& = & \lambda^2(\la w(t))(\lambda\cdot)\\
& = & \la(w(t,\lambda \cdot))   \\
& = & \la(D_\lambda w(t)),
\end{eqnarray*}
and 
$$(D_\lambda w)(0)=(D_\lambda u)(0)=D_\lambda \varphi.$$
 Thus,  $D_\lambda w$ also satisfies \eqref{si}.
 Therefore,  by uniqueness of solution of Banach valued ODE (semigroup theory), one has $$v(t)=D_\lambda w(t).$$ This establishes the claim.

 Using the above claim we have $e^{\lambda^2t\la }\varphi=D_\lambda^{-1}e^{-t\la}(D_\lambda\varphi)=D_{\lambda^{-1}}e^{-t\la}(D_\lambda\varphi)$, putting $\lambda=1/\sqrt{t}$ one has $e^{\la}\varphi=D_{\sqrt{t}}e^{-t\la}(D_{1/\sqrt{t}}\varphi)$. Then from the case $t=1$ it follows that
 \[
 \|D_{\sqrt{t}}e^{-t\la}D_{1/\sqrt{t}}(|\cdot|^{-b}\varphi)\|_q\lesssim\|\varphi\|_p.
 \]
Since $D_\lambda|\cdot|^{-b}=\lambda^{-b}|\cdot|^{-b}$, we get,
\[
t^{\frac{b}{2}} \|D_{\sqrt{t}}e^{-t\la}(|\cdot|^{-b}D_{1/\sqrt{t}}\varphi)\|_q\lesssim\|\varphi\|_p.
 \]
Replacing $\varphi$ by $D_{\sqrt{t}}\varphi$ we have 
 \[
t^{\frac{b}{2}} \|D_{\sqrt{t}}e^{-t\la}(|\cdot|^{-b}\varphi)\|_q\lesssim\|D_{\sqrt{t}}\varphi\|_p
 \]which implies
 \[
 t^{\frac{b}{2}-\frac{d}{2q}} \|e^{-t\la}(|\cdot|^{-b}\varphi)\|_q\lesssim t^{-\frac{d}{2p}}\|\varphi\|_p
 \]using $\|D_\lambda f\|_p=\lambda^{-d/p}\|f\|_p$. This completes the proof.
\end{proof}

\begin{remark}
In view of Remark \ref{rfu} \eqref{rfu2}, first strict inequality namely $\tilde{s}_1<\frac{d}{q}$  in \eqref{p1} can be relaxed to $\tilde{s}_1\leq\frac{d}{q}$ when $a=0$. But same cannot be done for the last inequality $b+\frac{d}{p}<\tilde{s}_2+2$ as a continuity argument is used to achieve \eqref{p2}.
\end{remark}
\begin{remark} When $a=0$, the above result holds even in dimension $d=1$. Then restriction for $a\neq0$ is due to the fact that the operator $\la$ is not defined on full $\R$.
\end{remark}
\section{Existence of Solution}\label{es}
\subsection{Local Existence}
\begin{proof}[{\bf Proof of Theorem \ref{local} \eqref{i}}]
Let $K_t(\varphi)=e^{-t\la}(|\cdot|^{-b}|\varphi|^\alpha \varphi)$.  Since 
$\frac{d(\alpha+1)}{\tilde{s}_2+2-b}<q<\frac{d}{\tilde{s}_1},$ we have  \[
\tilde{s}_1<\frac{d}{q}<b+\frac{d}{q/(\alpha+1)}<\tilde{s}_2+2.
\] 
By  Proposition \ref{est0},  we may obtain
\begin{eqnarray*}
\|K_t(\varphi)-K_t(\psi)\|_q&\lesssim&t^{-\frac{d
 }{2}(\frac{\alpha+1}{q}-\frac{1}{q})-\frac{b}{2}}\||\varphi|^\alpha \varphi-|\psi|^\alpha \psi\|_{\frac{q}{\alpha+1}}\\
 &\lesssim &t^{-\frac{d\alpha}{2q}-\frac{b}{2}}\|(|\varphi|^\alpha+|\psi|^\alpha)|\varphi-\psi|\|_{\frac{q}{\alpha+1}}\\
 &\lesssim &t^{-\frac{d\alpha}{2q}-\frac{b}{2}}(\|\varphi\|_q^\alpha+\|\psi\|_q^\alpha)\|\varphi-\psi\|_q\\
&\lesssim &t^{-\frac{d\alpha}{2q}-\frac{b}{2}}M^\alpha\|\varphi-\psi\|_q,
\end{eqnarray*}
provided $\|\varphi\|_q,\|\psi\|_q\leq M$. Note that 
\begin{itemize}
\item $K_t:L^q(\rd)\to L^q(\rd)$ is locally Lipschitz with Lipschitz constant $C_M(t)=t^{-\frac{d\alpha}{2q}-\frac{b}{2}}M^\alpha$ in $\{\varphi\in L^q(\rd):\|\varphi\|\leq M\}$
\item 
$C_M\in L^1(0,\epsilon)$ as $\frac{d\alpha}{2q}+\frac{b}{2}<1,$ i.e. $q>\frac{d\alpha}{2-b}=q_c$ 
\item $e^{-s\la }K_t=K_{s+t}$ 
\item $t\mapsto K_t(0)\equiv0\in L_t^1(0,\epsilon).$
\end{itemize}
By \cite[Theorem 1, p. 279]{weissler1979semilinear}, the result follows. 
In order to ensure room for $q$, in between $\max(\frac{d(\alpha+1)}{\tilde{s}_2+2-b},q_c)$ and $\frac{d}{\tilde{s}_1},$ the hypothesis $0< \alpha < \frac{2-b}{\tilde{s}_1}$ is  imposed,  while $0<b< 2$ is to make $q_c>0$.
\end{proof}
\begin{proof}[{\bf Proof of Theorem \ref{local} \eqref{ii}}] {\bf  Part (i) with the case $q>q_c$.} (the case $q=q_c$ will be treated along with the global existence proof.)

Since $\alpha < \frac{2-b}{\tilde{s}_1}< \frac{2-b}{\tilde{s}_1}+ ( \frac{\tilde{s}_2}{\tilde{s}_1}-1)$ and $\frac{d}{\tilde{s}_2+2}<q<\frac{d}{\tilde{s}_1},$ we have \[
\max\left(\tilde{s}_1,\frac{{d}/{q}-b}{\alpha+1}\right)<\min\left(\frac{\tilde{s}_2+2-b}{\alpha+1},\frac{d}{q}\right).
\] Therefore, one can choose $r$ such that\[
\max\left(\tilde{s}_1,\frac{{d}/{q}-b}{\alpha+1}\right)<\frac{d}{r}<\min\left(\frac{\tilde{s}_2+2-b}{\alpha+1},\frac{d}{q}\right),
\] which implies
\begin{equation}\label{2}
 \tilde{s}_1<\frac{d}{r}<\frac{d}{q}<b+\frac{d(\alpha+1)}{r}<\tilde{s}_2+2.
\end{equation}
Thus Proposition \ref{est0} can be used with exponent pairs $(\frac{r}{\alpha+1},q)$ and $(\frac{r}{\alpha+1},r)$. From $q>q_c$, it follows that
\[
\frac{d(\alpha+1)}{q}-2<\frac{d}{q}-b.
\]
This and \eqref{2} implies that $\frac{d(\alpha+1)}{q}-2<\frac{d}{q}-b<\frac{d(\alpha+1)}{r}$ which gives $d(\frac{1}{q}-\frac{1}{r})(\alpha+1)<2$ i.e. $\beta(\alpha+1)<1$ where $\beta=\frac{d}{2}(\frac{1}{q}-\frac{1}{r})$.
Introduce the space
$$B_M^T=\{u\in C([0,T];L^q(\rd))\cap C((0,T];L^r(\rd)):\max[\sup_{t\in [0,T ]} \|u(t)\|_q, \sup_{t\in[0,T ]} t^\beta\|u(t)\|_r]\leq M\}.$$ This space is endowed with the metric
\[
d(u,v)=:\max[\sup_{t\in [0,T ]} \|u(t)-v(t)\|_q, \sup_{t\in[0,T ]} t^\beta\|u(t)-v(t)\|_r].
\]
Consider the mapping 
\[
\mathcal{J}_\varphi(u)(t)=e^{-t\la}\varphi+\mu\int_0^t e^{-(t-s)\la}(|\cdot|^{-b}|u(s)|^\alpha u(s))ds.
\]
Let $\varphi,\psi\in L^q(\rd)$ and $u,v\in B_M^T.$  By  Proposition \ref{est0}  with exponent pairs $(\frac{r}{\alpha+1},q)$  we get
\begin{eqnarray*}
&&\|\mathcal{J}_\varphi(u)(t)-\mathcal{J}_\psi(v)(t)\|_q\\
&\lesssim&\|\varphi-\psi\|_q+\int_0^t(t-s)^{-\frac{d}{2}(\frac{\alpha+1}{r}-\frac{1}{q})-\frac{b}{2}}\||u(s)|^\alpha u(s)-|v(s)|^\alpha v(s)\|_{\frac{r}{\alpha+1}}ds\\
&\lesssim&\|\varphi-\psi\|_q+\int_0^t(t-s)^{-\frac{d}{2}(\frac{\alpha+1}{r}-\frac{1}{q})-\frac{b}{2}}(\|u(s)\|_r^\alpha+\|v(s)\|_r^\alpha)\| u(s)- v(s)\|_rds\\
&\lesssim&\|\varphi-\psi\|_q+M^\alpha\int_0^t(t-s)^{-\frac{d}{2}(\frac{\alpha+1}{r}-\frac{1}{q})-\frac{b}{2}}s^{-\beta\alpha}\| u(s)- v(s)\|_rds\\
&\lesssim&\|\varphi-\psi\|_q+M^\alpha d(u,v)\int_0^t(t-s)^{-\frac{d}{2}(\frac{\alpha+1}{r}-\frac{1}{q})-\frac{b}{2}}s^{-\beta(\alpha+1)}ds\\
&\lesssim&\|\varphi-\psi\|_q+M^\alpha d(u,v)t^{1-\frac{d\alpha}{2q}-\frac{b}{2}}\int_0^1(1-\sigma)^{-\frac{d}{2}(\frac{\alpha+1}{r}-\frac{1}{q})-\frac{b}{2}}\sigma^{-\beta(\alpha+1)}d\sigma
\end{eqnarray*}
as $\beta=\frac{d}{2}(\frac{1}{q}-\frac{1}{r})$. It follows from $q>q_c$ that $1-\frac{d\alpha}{2q}-\frac{b}{2}>0$ and from $r>q$ that 
\[
\frac{d}{2}(\frac{\alpha+1}{r}-\frac{1}{q})+\frac{b}{2}<\frac{d}{2}(\frac{\alpha+1}{r}-\frac{1}{r})+\frac{b}{2}=\frac{d\alpha}{2r}+\frac{b}{2}<\frac{d\alpha}{2q}+\frac{b}{2}<1.
\]
This together with $\beta(\alpha+1)<1$ imply that $\int_0^1(1-\sigma)^{-\frac{d}{2}(\frac{\alpha+1}{r}-\frac{1}{q})-\frac{b}{2}}\sigma^{-\beta(\alpha+1)}d\sigma<\infty.$ Hence,
\begin{equation}\label{3}
\|\mathcal{J}_\varphi(u)(t)-\mathcal{J}_\psi(v)(t)\|_q\lesssim\|\varphi-\psi\|_q+M^\alpha T^{1-\frac{d\alpha}{2q}-\frac{b}{2}}d(u,v).
\end{equation}
Similarly,  by Proposition \ref{est0}  with exponent pairs $(\frac{r}{\alpha+1},r)$  we get
\begin{eqnarray*}
&&\|\mathcal{J}_\varphi(u)(t)-\mathcal{J}_\psi(v)(t)\|_r\\
&\lesssim&t^{-\beta}\|\varphi-\psi\|_q+\int_0^t(t-s)^{-\frac{d\alpha}{2r}-\frac{b}{2}}(\|u(s)\|_r^\alpha+\|v(s)\|_r^\alpha)\| u(s)- v(s)\|_r ds\\
&\lesssim&t^{-\beta}\|\varphi-\psi\|_q+M^\alpha d(u,v)\int_0^t(t-s)^{-\frac{d\alpha}{2r}-\frac{b}{2}}s^{-\beta(\alpha+1)} ds.\\
\end{eqnarray*}
Hence,
\begin{eqnarray}\label{4}
&&t^\beta\|\mathcal{J}_\varphi(u)(t)-\mathcal{J}_\psi(v)(t)\|_r \nonumber \\
&\lesssim&\|\varphi-\psi\|_q+M^\alpha d(u,v)t^\beta\int_0^t(t-s)^{-\frac{d\alpha}{2r}-\frac{b}{2}}s^{-\beta(\alpha+1)} ds \nonumber \\
&\lesssim&\|\varphi-\psi\|_q+M^\alpha d(u,v)t^{1-\frac{d\alpha}{2q}-\frac{b}{2}}\int_0^1(1-\sigma)^{-\frac{d\alpha}{2r}-\frac{b}{2}}\sigma^{-\beta(\alpha+1)} d\sigma \nonumber \\
& \lesssim & \|\varphi-\psi\|_q+M^\alpha T^{1-\frac{d\alpha}{2q}-\frac{b}{2}}d(u,v).
\end{eqnarray}
By  \eqref{3} and \eqref{4},  we obtain
\begin{equation}\label{5}
d(\mathcal{J}_\varphi(u),\mathcal{J}_\psi(v))\leq c\|\varphi-\psi\|_q+cM^\alpha T^{1-\frac{d\alpha}{2q}-\frac{b}{2}}d(u,v)
\end{equation} for some $c>0$.

For a given $\varphi\in L^q(\rd)$, choose $\rho>0$ so that $\|\varphi||_q\leq\rho$.  Take $M=2c\rho$. Then for $u\in B_M^T$, from \eqref{5} it follows that
\[
d(\mathcal{J}_\varphi(u),0)\leq c\rho+cM^\alpha T^{1-\frac{d\alpha}{2q}-\frac{b}{2}}d(u,0)\leq \frac{M}{2}+cM^{\alpha+1} T^{1-\frac{d\alpha}{2q}-\frac{b}{2}}\leq M
\]
provided $T>0$ small enough so that $cM^{\alpha} T^{1-\frac{d\alpha}{2q}-\frac{b}{2}}\leq\frac{1}{2}$. This is possible as $1-\frac{d\alpha}{2q}-\frac{b}{2}>0$ as a consequence of $q>q_c$ as mentioned earlier. Thus $\mathcal{J}_\varphi(u)\in B_M^T$. Note that $T\sim M^{-\alpha(1-\frac{d\alpha}{2q}-\frac{b}{2})^{-1}}\sim \rho^{-\alpha(1-\frac{d\alpha}{2q}-\frac{b}{2})^{-1}}$ and thus this time $T$ only depends on $\|\varphi\|_q$ rather than  on the profile of $\varphi$ itself.

Also for $u,v\in B_M^T$, from \eqref{5} we have
\begin{align*}
d(\mathcal{J}_\varphi(u),\mathcal{J}_\varphi(v))\leq cM^\alpha T^{1-\frac{d\alpha}{2q}-\frac{b}{2}}d(u,v)\leq\frac{1}{2}d(u,v)
\end{align*} which says $\mathcal{J}_\varphi$ is a contraction in $B_M^T$. Therefore, it has a unique fixed point say $u$ i.e. there is a unique $u\in B_M^T$ such that $\mathcal{J}_\varphi(u)=u$. This completes the proof of part (i).\\

\noindent 
 \textbf{Part (ii) (i.e. uniqueness) with the case  $q>q_c$.} Let $u_1,u_2$ be two solution satisfying 
 \[
 \sup_{t\in[0,T ]} \|u_j(t)\|_q<\infty,\qquad \sup_{t\in[0,T ]} t^{\frac{d}{2}(\frac{1}{q}-\frac{1}{r})}\|u_j(t)\|_r<\infty.
 \] 
 Choose $\tilde{M}$ big enough so that 
\[\sup_{t\in[0,T ]} \|u_j(t)\|_q<\tilde{M},\qquad\sup_{t\in[0,T ]} t^\beta\|u_j(t)\|_r<\tilde{M}.\]
Then $u_1,u_2\in B_{\tilde{M}}^T$. Let $t_0$ be the infimum of $t$ in $[0,T)$ where $u_1(t)\neq u_2(t)$, then as in the above, one can choose a $0<\tilde{T}<T-t_0$ depending on $\|u_1(t_0)\|_q=\|u_2(t_0)\|_q.$ So that $\tilde{\mathcal{J}}_{u_1(t_0)}$ given by
\[
\tilde{\mathcal{J}}_{u_1(t_0)}(u)(t)=e^{-(t-t_0)\la}\varphi+\mu\int_{t_0}^t e^{-(t-s)\la}(|\cdot|^{-b}|u(s)|^\alpha u(s))ds
\]
has a unique fixed point in $B_{\tilde{M}}^{\tilde{T}}$. Since $u_j$ are solutions to \eqref{0} they are also fixed point of $\tilde{\mathcal{J}}_{u_1(t_0)}$ and hence $u_1(t)=t_2(t)$ for $t\in[t_0,t_0+\tilde{T})$ which is a contraction.


Let $u,v$ be the unique solutions with data $\varphi,\psi$ respectively, the from \eqref{5}, it follows that
\begin{align*}
d(u,v)\leq c\|\varphi-\psi\|_q+cM^\alpha T^{1-\frac{d\alpha}{2q}-\frac{b}{2}}d(u,v)\leq c\|\varphi-\psi\|_q+\frac{1}{2}d(u,v)\Longrightarrow d(u,v)\leq 2c\|\varphi-\psi\|_q
\end{align*}implying continuous dependency of solution on data.

Blowup alternatives, part \eqref{iii} regarding $T_{max}$  are standard. 
\end{proof}
\subsection{Global Existence}
Using \eqref{2} for $q=q_c$, if    $\frac{2-b}{\tilde{s}_2+2}<\alpha<\frac{2-b}{\tilde{s}_1}$ i.e. $\frac{d}{\tilde{s}_2+2}<q_c<\frac{d}{\tilde{s}_1}$ there exists $r>q_c$ such that 
\begin{equation}\label{6}
 \tilde{s}_1<\frac{d}{r}<\frac{d}{q_c}<b+\frac{d(\alpha+1)}{r}<\tilde{s}_2+2.
\end{equation}
Set \begin{equation}\label{b1}
\beta=\frac{d}{2}(\frac{1}{q_c}-\frac{1}{r})=\frac{2-b}{2\alpha}-\frac{d}{2r}
\end{equation} as before.  Note that using $q_c=\frac{d\alpha}{2-b}$ and \eqref{6},  $$\frac{d(\alpha+1)}{q_c}-2=\frac{d}{q_c}-b<\frac{d(\alpha+1)}{r}$$ which implies $\beta(\alpha+1)<1$. Also $\frac{d\alpha}{2r}+\frac{b}{2}<\frac{d\alpha}{2q_c}+\frac{b}{2}=1.$
\begin{theorem}[global existence]\label{global2}
Let $0 < b < \min(2,d)$ and $\tilde{s}_1<\frac{\tilde{s}_2+2-b}{\alpha+1}$, $\frac{d}{\tilde{s}_2+2}<q_c<\frac{d}{\tilde{s}_1} $. Let $r$  verify \eqref{6} and $\beta$ be as defined in \eqref{b1}. 
Suppose that $\rho > 0$ and $M > 0$ satisfy the inequality\[
c\rho+cM^{\alpha+1} \leq M, 
\] where $c = c(\alpha, d, b, r) > 0$  
  is a constant and can explicitly be computed. Let $\varphi\in\mathcal{S}'(\rd)$ be 
  such that
\begin{equation}\label{8}
\sup_{t>0}t^\beta\|e^{-t\la} \varphi\|_r \leq\rho.
\end{equation}
Then there exists a unique global solution u of \eqref{0} such that
\[
\sup_{t>0}t^\beta\|u(t)\|_r \leq M. 
\]
Furthermore,
\begin{enumerate}
\item $u(t)-e^{-t\la}\varphi\in C([0,\infty);L^s(\rd))$, for $\tilde{s}_1<\frac{d}{q_c} <\frac{d}{s}<b +\frac{(\alpha+1)d}{r}<\tilde{s}_2+2$\label{g21}
\item $u(t)-e^{-t\la}\varphi\in L^\infty((0,\infty);L^{q_c}(\rd))$, if $\tilde{s}_1<\frac{d}{q_c}< b +\frac{\alpha+1}{r}<\tilde{s}_2+2$ \label{g22}
\item $\lim_{t\to0} u(t) = \varphi$ in $L^{s}(\rd)$ if $\varphi\in L^{s}(\rd)$ for $s$ satisfying $\frac{d}{q_c}\leq\frac{d}{s} < b +\frac{\alpha+1}{r}<\tilde{s}_2+2$\label{g23}
\item $\lim_{t\to0} u(t) = \varphi$ in $\mathcal{S}'(\rd)$ for $\varphi\in\mathcal{S}'(\rd)$ \label{g25}
\item $\sup_{t>0} t^{\frac{2-b}{2\alpha}-\frac{d}{2q}}\|u(t)\|_q \leq C_M<\infty,$ for all $q\in [r,\frac{d}{\tilde{s}_1})$ with $C_M\to0$ as $M\to0$\label{g24}. 
\end{enumerate}

Moreover, if $\varphi$ and $\psi$ satisfy \eqref{6} and if $u$ and $v$ be respectively the solutions of \eqref{0} with initial data $\varphi$ and $\psi$. Then
  \[
\sup_{t>0}t^{\frac{2-b}{2\alpha}-\frac{d}{2q}} \|u(t)-v(t)\|_q \leq C \sup_{t>0}t^\beta\|e^{-t\la}(\varphi-\psi)\|_r , \text{ for all }q\in[r,\frac{d}{\tilde{s}_1}).
\]
If in addition, $e^{-t\la}(\varphi-\psi)$ has the stronger decay property
\[
\sup_{t>0}t^{\beta+\delta}\|e^{-t\la}(\varphi-\psi)\|_r <\infty, 
\]
for some $\delta > 0$ such that $\beta(\alpha + 1) + \delta < 1$, and with $M$ perhaps smaller, then\begin{equation}\label{9} \sup_{t>0}t^{\beta+\delta}\|u(t)-v(t)\|_r \leq C\sup_{t>0}t^{\beta+\delta}\|e^{-t\la}(\varphi-\psi)\|_r
\end{equation}
where $C>0$ is a constant.
\end{theorem}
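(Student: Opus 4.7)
The plan is to run a Banach fixed-point argument on the complete metric space
\[
E_M = \{u : (0,\infty) \to L^r(\rd) \text{ measurable}\,:\, \sup_{t>0} t^\beta \|u(t)\|_r \leq M\}
\]
endowed with $d(u,v) = \sup_{t>0} t^\beta \|u(t)-v(t)\|_r$, applied to the Duhamel map $\mathcal{J}_\varphi$. The conditions \eqref{6} are exactly what is needed to invoke Proposition \ref{est0} with the exponent pair $(r/(\alpha+1), r)$, yielding
\[
\|e^{-(t-s)\la}(|\cdot|^{-b}|u(s)|^\alpha u(s))\|_r \lesssim (t-s)^{-d\alpha/(2r) - b/2}\|u(s)\|_r^{\alpha+1}.
\]
Combined with $\|u(s)\|_r \leq M s^{-\beta}$ and the change of variables $s=t\sigma$, the nonlinear contribution is bounded by $cM^{\alpha+1} t^{1 - d\alpha/(2r) - b/2 - \beta\alpha}\,B$, where $B = \int_0^1(1-\sigma)^{-d\alpha/(2r)-b/2}\sigma^{-\beta(\alpha+1)}\,d\sigma$. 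The key algebraic observation is that because $q_c = d\alpha/(2-b)$ is critical and $\beta\alpha = (2-b)/2 - d\alpha/(2r)$, the exponent of $t$ collapses to $-\beta$, so after multiplying by $t^\beta$ the estimate is $t$-independent; both $d\alpha/(2r)+b/2 < 1$ (from $r > q_c$) and $\beta(\alpha+1) < 1$ in \eqref{6} ensure $B < \infty$.

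\textbf{Contraction and fixed point.} Together with the hypothesis $\sup_t t^\beta \|e^{-t\la}\varphi\|_r \leq \rho$, this delivers $\sup_t t^\beta \|\mathcal{J}_\varphi(u)(t)\|_r \leq c\rho + cM^{\alpha+1}$. Applying the same computation to differences via $||u|^\alpha u - |v|^\alpha v| \lesssim (|u|^\alpha + |v|^\alpha)|u-v|$ gives
\[
d(\mathcal{J}_\varphi(u), \mathcal{J}_\varphi(v)) \leq cM^\alpha d(u,v).
\]
The stated inequality $c\rho + cM^{\alpha+1} \leq M$ ensures $\mathcal{J}_\varphi$ maps $E_M$ into itself, and by shrinking $M$ (or adjusting $c$) one has $cM^\alpha \leq 1/2$, so Banach's theorem yields a unique fixed point $u \in E_M$.

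\textbf{Verifying items \eqref{g21}--\eqref{g24}.} For \eqref{g24}, apply Proposition \ref{est0} to the identity $u = \mathcal{J}_\varphi(u)$ with exponent pair $(r/(\alpha+1), q)$ for $q \in [r, d/\tilde{s}_1)$; the weight $t^{(2-b)/(2\alpha) - d/(2q)}$ is exactly the critical-scaling weight, so the same algebraic cancellation makes the nonlinear piece $t$-free, and smallness $C_M \to 0$ as $M \to 0$ follows from the $M^{\alpha+1}$ dependence. For \eqref{g21} and \eqref{g22}, apply Proposition \ref{est0} to the Duhamel term alone with exponent pairs $(r/(\alpha+1), s)$ and $(r/(\alpha+1), q_c)$; the exponent ranges keep us in the admissible region, and the resulting power of $t$ is strictly positive in case \eqref{g21} and zero in case \eqref{g22}, giving $L^s$- and $L^{q_c}$-control respectively, with continuity in $t$ by dominated convergence. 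Property \eqref{g23} then follows by combining \eqref{g21} with the standard strong continuity $e^{-t\la}\varphi \to \varphi$ in $L^s$, and \eqref{g25} follows from \eqref{g21} by approximating in $\mathcal{S}'$ (the Duhamel piece is $L^s$-bounded, hence tempered and vanishing as $t\to 0$).

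\textbf{Continuous dependence and \eqref{9}.} Subtracting the fixed-point identities for $u$ and $v$ and rerunning the weighted estimate in $L^q$ for $q \in [r, d/\tilde{s}_1)$ gives the first stated bound directly: the linear piece contributes $\sup_t t^\beta \|e^{-t\la}(\varphi-\psi)\|_r$ and the nonlinear piece is absorbed using the smallness of $M^\alpha$. For \eqref{9}, multiply throughout by the heavier weight $t^{\beta+\delta}$; the Beta integral now carries $\sigma^{-\beta(\alpha+1)-\delta}$, which remains integrable precisely under the assumption $\beta(\alpha+1) + \delta < 1$, and the critical-scaling cancellation still works up to an offset of $\delta$. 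The main obstacle throughout is the bookkeeping of exponent ranges to verify Proposition \ref{est0} in each item \eqref{g21}--\eqref{g24}, together with the various convergence statements at $t=0$; the fixed-point argument itself is mechanical once the vanishing-$t$-power identity coming from critical scaling is observed.
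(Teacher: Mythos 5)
Your fixed-point construction on $B_M=\{u:\sup_{t>0}t^\beta\|u(t)\|_r\leq M\}$, the self-map and contraction estimates via Proposition \ref{est0} with the pair $(\tfrac{r}{\alpha+1},r)$, the exact cancellation $\beta+1-\tfrac{d\alpha}{2r}-\tfrac b2-\beta(\alpha+1)=0$, the treatment of \eqref{g21}--\eqref{g25}, and the weighted estimate \eqref{9} all coincide with the paper's argument and are fine as sketched. The genuine gap is in item \eqref{g24} (and it propagates to the ``Moreover'' bound $\sup_t t^{\beta(q)}\|u(t)-v(t)\|_q\lesssim\sup_t t^\beta\|e^{-t\la}(\varphi-\psi)\|_r$ for general $q$). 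You propose to apply Proposition \ref{est0} with the pair $(\tfrac{r}{\alpha+1},q)$ to the full Duhamel integral $\int_0^t$ for every $q\in[r,\tfrac{d}{\tilde s_1})$. The power of $t$ indeed collapses to $-\beta(q)$ by scaling, but the Beta integral need not converge: the singularity at $s=t$ carries the exponent $\tfrac d2\bigl(\tfrac{\alpha+1}{r}-\tfrac1q\bigr)+\tfrac b2=\tfrac{d\alpha}{2r}+\tfrac b2+\tfrac d2\bigl(\tfrac1r-\tfrac1q\bigr)$, and the hypothesis \eqref{6} only guarantees $\tfrac{d\alpha}{2r}+\tfrac b2<1$; the additional term $\tfrac d2(\tfrac1r-\tfrac1q)$ can push the exponent to $1$ or beyond when $q$ approaches $\tfrac{d}{\tilde s_1}$ (one would need $r\geq\tfrac{d(\alpha+1)}{\tilde s_1+2-b}$, which does not follow from $\tfrac{d(\alpha+1)}{r}<\tilde s_2+2-b$ when $\tilde s_1<\tilde s_2$). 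So the ``one-shot'' estimate fails precisely in the range of $q$ the statement is about.

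The paper circumvents this with a bootstrap: Lemma \ref{23} writes $u(t)=e^{-\frac t2\la}u(t/2)+\mu\int_{t/2}^t(\cdots)$, so the time singularity at $s=0$ disappears, the semigroup factor $e^{-\frac t2\la}$ supplies the $L^{s}\to L^{q}$ gain, and the endpoint singularity is controlled only under the extra hypothesis $\tfrac d2(\tfrac{\alpha+1}{s}-\tfrac1q)<1-\tfrac b2$, which limits how much the integrability exponent can be raised in one step. One then iterates $r_0=r<r_1<r_2<\cdots$ with the explicit step \eqref{24} of size at least $\tfrac12(\tfrac{2-b}{d}-\tfrac{\alpha}{r_0})>0$, shows the iteration reaches any $q<\tfrac{d}{\tilde s_1}$ in finitely many steps, and fills in intermediate exponents by interpolation; the same splitting at $t/2$ (decay estimate for $(r,q)$ plus Proposition \ref{est0} for $(\tfrac{q}{\alpha+1},q)$, using \eqref{g24}) is what yields the continuous-dependence bound in $L^q$. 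To repair your proof you would need to incorporate this a priori estimate and iteration (or an equivalent device); as written, the step claiming \eqref{g24} for all $q\in[r,\tfrac d{\tilde s_1})$ does not go through.
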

\begin{proof}[{\bf Proof}]
Let 
$$B_M=\{u:(0,\infty)\to L^r(\rd)): \sup_{t>0} t^\beta\|u(t)\|_r\leq M\}$$ and
\[
d(u,v)=: \sup_{t>0} t^\beta\|u(t)-v(t)\|_r.
\]
Let $\varphi,\psi\in L^q(\rd)$ and $u,v\in B_M.$  Using  Proposition \ref{est0}  with exponent pairs $(\frac{r}{\alpha+1},r)$  we get
\begin{eqnarray}\label{25}
&& t^\beta \|\mathcal{J}_\varphi(u)(t)-\mathcal{J}_\psi(v)(t)\|_r\nonumber\\
& \lesssim& t^\beta  \|e^{-t\la}(\varphi-\psi)\|_r+ t^\beta \int_0^t(t-s)^{-\frac{d\alpha}{2r}-\frac{b}{2}}(\|u(s)\|_r^\alpha+\|v(s)\|_r^\alpha)\| u(s)- v(s)\|_r ds\\
&\lesssim& t^\beta \|e^{-t\la}(\varphi-\psi)\|_r+ t^\beta M^\alpha d(u,v)\int_0^t(t-s)^{-\frac{d\alpha}{2r}-\frac{b}{2}}s^{-\beta(\alpha+1)} ds\nonumber\\
& \lesssim & t^\beta\|e^{-t\la}(\varphi-\psi)\|_r+M^\alpha d(u,v)t^{\beta+1-\frac{d\alpha}{2r}-\frac{b}{2}-\beta(\alpha+1)}\int_0^1(1-\sigma)^{-\frac{d\alpha}{2r}-\frac{b}{2}}\sigma^{-\beta(\alpha+1)} d\sigma.
\end{eqnarray}
Since $\beta+1-\frac{d\alpha}{2r}-\frac{b}{2}-\beta(\alpha+1)=0$,  we get
\begin{equation}\label{7}
d(\mathcal{J}_\varphi(u),\mathcal{J}_\psi(v))\lesssim\sup_{t>0}t^\beta\|e^{-t\la}(\varphi-\psi)\|_r+M^\alpha d(u,v).
\end{equation}
Putting $\psi,v=0$, and using the hypothesis we have
\[
\sup_{t>0}t^\beta\|\mathcal{J}_\varphi(u)(t)\|_r=d(\mathcal{J}_\varphi(u),0)\leq c\sup_{t>0}t^\beta\|e^{-t\la}\varphi\|_r+cM^\alpha d(u,0)\leq c\rho+cM^\alpha\leq M
\]implying $\mathcal{J}_\varphi(u)\in B_M$. Putting $\psi=\varphi$ in \eqref{7}
\[
d(\mathcal{J}_\varphi(u),\mathcal{J}_\varphi(v))\leq cM^\alpha d(u,v)
\]implying $\mathcal{J}_\varphi(u)$ is a contraction in $B_M$ as $cM^\alpha<1$. Hence it has a unique fixed point in $B_M$.

Now we will prove \eqref{g21}. Lets prove the continuity at $t=0$ first. Take $s$ satisfying \[
 \tilde{s}_1<\frac{d}{q_c}\leq\frac{d}{s}<b+\frac{d(\alpha+1)}{r}<\tilde{s}_2+2.
\]
Using Proposition \ref{est0} for the pair $(\frac{r}{\alpha+1},s)$ we have
\begin{eqnarray*}
\|u(t)-e^{-t\la}\varphi\|_s
&\lesssim &\int_0^t(t-\tau)^{-\frac{d}{2}(\frac{\alpha+1}{r}-\frac{1}{s})-\frac{b}{2}}\|u(\tau)\|_r^{\alpha+1}d\tau\\
&\leq&M^{\alpha+1}\int_0^t(t-\tau)^{-\frac{d}{2}(\frac{\alpha+1}{r}-\frac{1}{s})-\frac{b}{2}}\tau^{-\beta(\alpha+1)}d\tau\\
&=&M^{\alpha+1}t^{1-\frac{d}{2}(\frac{\alpha+1}{r}-\frac{1}{s})-\frac{b}{2}-\beta(\alpha+1)}\int_0^1(1-\sigma)^{-\frac{d}{2}(\frac{\alpha+1}{r}-\frac{1}{s})-\frac{b}{2}}\sigma^{-\beta(\alpha+1)}d\sigma\\
&=&M^{\alpha+1}t^{\frac{d}{2s}-\frac{2-b}{2\alpha}}\int_0^1(1-\sigma)^{-\frac{d}{2}(\frac{\alpha+1}{r}-\frac{1}{s})-\frac{b}{2}}\sigma^{-\beta(\alpha+1)}d\sigma.
\end{eqnarray*}
Note that 
\[
\frac{d}{2}(\frac{\alpha+1}{r}-\frac{1}{s})+\frac{b}{2}<\frac{d}{2}(\frac{\alpha+1}{r}-\frac{1}{q_c})+\frac{b}{2}<\frac{d}{2}(\frac{\alpha+1}{r}-\frac{1}{r})+\frac{b}{2}=\frac{d\alpha}{2r}+\frac{b}{2}<\frac{d\alpha}{2q_c}+\frac{b}{2}=1
\]and $\beta(\alpha+1)<1$ implying $\int_0^1(1-\sigma)^{-\frac{d}{2}(\frac{\alpha+1}{r}-\frac{1}{s})-\frac{b}{2}}\sigma^{-\beta(\alpha+1)}d\sigma<\infty$. Thus
\begin{equation}\label{p5}
\|u(t)-e^{-t\la}\varphi\|_s\lesssim M^{\alpha+1}t^{\frac{d}{2s}-\frac{2-b}{2\alpha}}.
\end{equation}
Since $\frac{d}{2s}-\frac{2-b}{2\alpha}>0$ (this follows fro the condition $\frac{d}{q_c}<\frac{d}{s}$ in \eqref{g21}), we get \begin{equation}\label{p6}
u(t)-e^{-t\la}\varphi\to0
\end{equation} in $L^s(\rd)$ as $t\to0+$ and hence continuity at $t=0$.  

For other $t>0$, first note that $u(t)-e^{-t\la}\varphi-u(t_0)+e^{-t_0\la}\varphi=\mu\int_0^t e^{-(t-s)\la}(|\cdot|^{-b}|u(s)|^\alpha u(s))ds-\mu\int_0^{t_0} e^{-(t_0-s)\la}(|\cdot|^{-b}|u(s)|^\alpha u(s))ds$, but with $0<t<t_0$
\begin{align}
&\|\int_0^t e^{-(t-\tau)\la}(|\cdot|^{-b}|u(\tau)|^\alpha u(\tau))d\tau-\int_0^t e^{-(t_0-\tau)\la}(|\cdot|^{-b}|u(\tau)|^\alpha u(\tau))d\tau\|_s\nonumber\\
&\leq\int_0^{t_0}\chi_{[0,t]}(\tau) \|e^{-(t-\tau)\la}(|\cdot|^{-b}|u(\tau)|^\alpha u(\tau))- e^{-(t_0-\tau)\la}(|\cdot|^{-b}|u(\tau)|^\alpha u(\tau))\|_sd\tau\to 0\label{p3}
\end{align}as $t\uparrow t_0$, using dominated convergence (in fact note that for each $\tau$, the integrand say $I(\tau)$ goes to zero as $t\uparrow t_0$ and now 
\begin{align*}
I(\tau)&\leq\chi_{[0,t]}(\tau)\left[ \|e^{-(t-\tau)\la}(|\cdot|^{-b}|u(\tau)|^\alpha u(\tau))\|_s+\| e^{-(t_0-\tau)\la}(|\cdot|^{-b}|u(\tau)|^\alpha u(\tau))\|_s\right]\\
&\lesssim\chi_{[0,t]}(\tau)\left[(t-\tau)^{-\frac{d}{2}(\frac{\alpha+1}{r}-\frac{1}{s})-\frac{b}{2}}+(t_0-\tau)^{-\frac{d}{2}(\frac{\alpha+1}{r}-\frac{1}{s})-\frac{b}{2}}\right]\|u(\tau)\|_s^{\alpha+1}\\
&\leq M^{\alpha+1}\chi_{[0,t)}\left[(t-\tau)^{-\frac{d}{2}(\frac{\alpha+1}{r}-\frac{1}{s})-\frac{b}{2}}+(t_0-\tau)^{-\frac{d}{2}(\frac{\alpha+1}{r}-\frac{1}{s})-\frac{b}{2}}\right]\tau^{-\beta(\alpha+1)}
\end{align*} which is integrable). 
On the other hand for $\frac{t_0}{2}\leq t<t_0$,
\begin{align*}
&\|\int_0^t e^{-(t_0-\tau)\la}(|\cdot|^{-b}|u(\tau)|^\alpha u(\tau))d\tau-\int_0^{t_0} e^{-(t_0-\tau)\la}(|\cdot|^{-b}|u(\tau)|^\alpha u(\tau))d\tau\|_s\\
&=\|\int_t^{t_0} e^{-(t_0-\tau)\la}(|\cdot|^{-b}|u(\tau)|^\alpha u(\tau))d\tau\|_s\\
&\leq M^{\alpha+1}\int_t^{t_0}(t_0-\tau)^{-\frac{d}{2}(\frac{\alpha+1}{r}-\frac{1}{s})-\frac{b}{2}}\tau^{-\beta(\alpha+1)}d\tau\\
&=M^{\alpha+1}t_0^{1-\frac{d}{2}(\frac{\alpha+1}{r}-\frac{1}{s})-\frac{b}{2}-\beta(\alpha+1)}\int_{t/t_0}^1(1-\sigma)^{-\frac{d}{2}(\frac{\alpha+1}{r}-\frac{1}{s})-\frac{b}{2}}\sigma^{-\beta(\alpha+1)}d\sigma\\
&\leq M^{\alpha+1}2^{2\beta(\alpha+1)}t_0^{1-\frac{d}{2}(\frac{\alpha+1}{r}-\frac{1}{s})-\frac{b}{2}-\beta(\alpha+1)}\int_{t/t_0}^1(1-\sigma)^{-\frac{d}{2}(\frac{\alpha+1}{r}-\frac{1}{s})-\frac{b}{2}}d\sigma\\
&= M^{\alpha+1}2^{2\beta(\alpha+1)}t_0^{1-\frac{d}{2}(\frac{\alpha+1}{r}-\frac{1}{s})-\frac{b}{2}-\beta(\alpha+1)}\int_0^{1-t/t_0}\sigma^{-\frac{d}{2}(\frac{\alpha+1}{r}-\frac{1}{s})-\frac{b}{2}}d\sigma\\
&= M^{\alpha+1}2^{2\beta(\alpha+1)}t_0^{1-\frac{d}{2}(\frac{\alpha+1}{r}-\frac{1}{s})-\frac{b}{2}-\beta(\alpha+1)}\frac{1}{1-\frac{d}{2}(\frac{\alpha+1}{r}-\frac{1}{s})-\frac{b}{2}}(1-\frac{t}{t_0})^{1-\frac{d}{2}(\frac{\alpha+1}{r}-\frac{1}{s})-\frac{b}{2}}
\to0
\end{align*}as $t\uparrow t_0$. Using this and \eqref{p3} we get $u(t)-e^{-t_0\la}\varphi\to u(t_0)-e^{-t_0\la}\varphi$ as $t\uparrow t_0$ and so left continuity is established. Now for $0<t_0<t$,
\begin{align}
&\|\int_0^t e^{-(t-\tau)\la}(|\cdot|^{-b}|u(\tau)|^\alpha u(\tau))d\tau-\int_0^{t_0} e^{-(t-\tau)\la}(|\cdot|^{-b}|u(\tau)|^\alpha u(\tau))d\tau\|_s\nonumber\\
&=\|\int_{t_0}^t e^{-(t-\tau)\la}(|\cdot|^{-b}|u(\tau)|^\alpha u(\tau))d\tau\|_s\nonumber\\
&\leq\int_{t_0}^t(t-\tau)^{-\frac{d}{2}(\frac{\alpha+1}{r}-\frac{1}{s})-\frac{b}{2}}\tau^{-\beta(\alpha+1)}d\tau\nonumber\\
&=t^{1-\frac{d}{2}(\frac{\alpha+1}{r}-\frac{1}{s})-\frac{b}{2}-\beta(\alpha+1)}\int_{t_0/t}^1(1-\sigma)^{-\frac{d}{2}(\frac{\alpha+1}{r}-\frac{1}{s})-\frac{b}{2}}\sigma^{-\beta(\alpha+1)}d\sigma\to0\label{p4}
\end{align}as $t\downarrow t_0$. And arguing as \eqref{p3}
\begin{align*}
&\|\int_0^{t_0} e^{-(t-\tau)\la}(|\cdot|^{-b}|u(\tau)|^\alpha u(\tau))d\tau-\int_0^{t_0} e^{-(t_0-\tau)\la}(|\cdot|^{-b}|u(\tau)|^\alpha u(\tau))d\tau\|_s\\
&\leq\int_0^{t_0}\| e^{-(t-\tau)\la}(|\cdot|^{-b}|u(\tau)|^\alpha u(\tau))- e^{-(t_0-\tau)\la}(|\cdot|^{-b}|u(\tau)|^\alpha u(\tau))\|_sd\tau\to0
\end{align*}as $t\downarrow t_0$. This together with \eqref{p4} implies $u(t)-e^{-t_0\la}\varphi\to u(t_0)-e^{-t_0\la}\varphi$ as $t\downarrow t_0$ so the right continuity is established. This proves \eqref{g21}. 

\eqref{g22} follows from \eqref{p5}.

  If $\varphi\in L^{s}(\rd)$, then $e^{-t\la}\varphi\to\varphi$ in $L^s(\rd)$  as $t\to0+$(semigroup property). 
 Thus, using \eqref{p6}, we conclude $u(t) \to\varphi$ in $L^s(\rd)$. 
This proves \eqref{g23} and hence \eqref{g25}. 

For part  \eqref{g24} we use iteration. Set $r_0=r$, $M_0=M$ we note that we have
\[
\sup_{t>0}t^{\frac{2-b}{2\alpha}-\frac{d}{2r_0}}\|u(t)\|_{r_0}\leq M_0.
\]
Take
\begin{equation}\label{24}
\frac{1}{r_1}=\begin{cases}
\frac{1}{r_0}-\frac{1}{2}(\frac{2-b}{d}-\frac{\alpha}{r_0})&\text{ if }\frac{1}{r_0}-\frac{1}{2}(\frac{2-b}{d}-\frac{\alpha}{r_0})>\frac{\tilde{s}_1}{d}\\
\text{any number in }(\frac{\tilde{s}_1}{d},\frac{1}{r_0})&\text{ otherwise}.
\end{cases}
\end{equation}
Then it is easy to see $\tilde{s}_1 <\frac{d}{r_1}<\frac{d}{r_0}$ as $\frac{2-b}{d}-\frac{\alpha}{r_0}>0\Leftrightarrow r_0>q_c$. Thus using \eqref{6} we have \begin{equation}\label{27}
\tilde{s}_1 <\frac{d}{r_1}<\frac{d}{r_0}  <b+ \frac{d(\alpha+1)}{r_0} < \tilde{s}_2+2
\end{equation}
If the first case occurs in \eqref{24}, then $\frac{d}{2}\big(\frac{1}{r_0}-\frac{1}{r_1}\big)=\frac{1}{2}(\frac{2-b}{2}-\frac{d\alpha}{2r_0})<\frac{2-b}{2}-\frac{d\alpha}{2r_0}\Leftrightarrow\frac{d}{2}\big(\frac{\alpha+1}{r_0}-\frac{1}{r_1}\big)<1-\frac{b}{2}$.
If the second case occurs in \eqref{24}, then $\frac{1}{r_0}-\frac{1}{2}(\frac{2-b}{d}-\frac{\alpha}{r_0})<\frac{\tilde{s}_1}{d}<\frac{1}{r_1}$. Therefore
$\frac{d}{2}\big(\frac{1}{r_0}-\frac{1}{r_1}\big)<\frac{d}{2}\big(\frac{1}{r_0}-\frac{1}{r_0}+\frac{1}{2}(\frac{2-b}{d}-\frac{\alpha}{r_0}\big)=\frac{1}{2}(\frac{2-b}{2}-\frac{d\alpha}{2r_0})<\frac{2-b}{2}-\frac{d\alpha}{2r_0}\Rightarrow\frac{d}{2}\big(\frac{\alpha+1}{r_0}-\frac{1}{r_1}\big)<1-\frac{b}{2}$. Thus in both case in \eqref{24}
\[
\frac{d}{2}\big(\frac{\alpha+1}{r_0}-\frac{1}{r_1}\big)<1-\frac{b}{2}.
\]
 In view of this and \eqref{27}, using Lemma \ref{23} with $(s,q)=(r_0,r_1)$ we have
   \begin{equation*}\label{}
\sup_{t>0}t^{\frac{2-b}{2\alpha}-\frac{d}{2r_1}}\|u(t)\|_{r_1}\leq cM_0(1+M_0^\alpha)=:M_1
\end{equation*} 
If the second case occors in \eqref{24}, we stop the iteration, otherwise will next choose $r_2$ from $r_1$ as we have chosen $r_1$ from $r_0$ above.

Note that this iteration must stop (the second case must occur) at finite steps.
If not we would find $\tilde{s}_1<\cdots<\frac{d}{r_{i+1}}<\frac{d}{r_i}<\cdots<\frac{d}{r_0}$ with
\begin{equation}\label{26}
\sup_{t>0}t^{\frac{2-b}{2\alpha}-\frac{d}{2r_{i+1}}}\|u(t)\|_{r_{i+1}}\leq cM_i(1+M_i^\alpha)=:M_{i+1}
\end{equation}
 and $\frac{1}{r_i}-\frac{1}{r_{i+1}}=\frac{1}{2}(\frac{2-b}{d}-\frac{\alpha}{r_i})\geq\frac{1}{2}(\frac{2-b}{d}-\frac{\alpha}{r_0})>0$. Then $\frac{1}{r_0}=(\frac{1}{r_0}-\frac{1}{r_1})+(\frac{1}{r_1}-\frac{1}{r_2})+\cdots+(\frac{1}{r_i}-\frac{1}{r_{i+1}})+\frac{1}{r_{i+1}}\geq\frac{i+1}{2}(\frac{2-b}{d}-\frac{\alpha}{r_0})\to\infty$ which is a contradiction as $\frac{1}{r_0}<\infty$.

For $q\in(r_i,r_{i+1})$ we use interpolation inequality with \eqref{26}
$\|u(t)\|_q\leq\|u(t)\|_{r_i}^\theta\|u(t)\|_{r_{i+1}}^{1-\theta}$ where $\frac{1}{q}=\frac{\theta}{r_1}+\frac{1-\theta}{r_{i+1}}$.

Further part: Using $\mathcal{J}_\varphi(u)=u$, $\mathcal{J}_\psi(v)=v$ in \eqref{25} we have
\begin{eqnarray*}
\|u(t)-v(t)\|_r&\lesssim&\|e^{-t\la}(\varphi-\psi)\|_r+\int_0^t(t-s)^{-\frac{d\alpha}{2r}-\frac{b}{2}}(\|u(s)\|_r^\alpha+\|v(s)\|_r^\alpha)\| u(s)- v(s)\|_r ds\\
&\leq&\|e^{-t\la}(\varphi-\psi)\|_r+M^\alpha\int_0^t(t-s)^{-\frac{d\alpha}{2r}-\frac{b}{2}}s^{-\beta\alpha}\| u(s)- v(s)\|_r ds.
\end{eqnarray*}
Therefore
\begin{eqnarray*}
t^{\beta+\delta}\|u(t)-v(t)\|_r&\leq&t^{\beta+\delta}\|e^{-t\la}(\varphi-\psi)\|_r+M^\alpha t^{\beta+\delta}\\
&&\cdot\int_0^t(t-s)^{-\frac{d\alpha}{2r}-\frac{b}{2}}s^{-\beta\alpha-\beta-\delta}s^{\beta+\delta}\| u(s)- v(s)\|_r ds\\
&\leq&t^{\beta+\delta}\|e^{-t\la}(\varphi-\psi)\|_r+M^\alpha t^{\beta+\delta}\sup_{\tau>0} \tau^{\beta+\delta}\| u(\tau)- v(\tau)\|_r\\
&&\cdot\int_0^t(t-s)^{-\frac{d\alpha}{2r}-\frac{b}{2}}s^{-\beta(\alpha+1)-\delta} ds\\
&\leq&t^{\beta+\delta}\|e^{-t\la}(\varphi-\psi)\|_r+M^\alpha t^{\beta+\delta -\frac{d\alpha}{2r}-\frac{b}{2}-\beta(\alpha+1)-\delta+1}\\
&&\cdot\sup_{\tau>0} \tau^{\beta+\delta}\| u(\tau)- v(\tau)\|_r\int_0^1(1-\sigma)^{-\frac{d\alpha}{2r}-\frac{b}{2}}\sigma^{-\beta(\alpha+1)-\delta} d\sigma
\end{eqnarray*}
Note that $\int_0^1(1-\sigma)^{-\frac{d\alpha}{2r}-\frac{b}{2}}\sigma^{-\beta(\alpha+1)-\delta} d\sigma<\infty$ as $\frac{d\alpha}{2r}+\frac{b}{2}<1$, $\beta(\alpha+1)+\delta<1$. Thus
\begin{equation*}
\sup_{t>0}t^{\beta+\delta}\|u(t)-v(t)\|_r\leq ct^{\beta+\delta}\|e^{-t\la}(\varphi-\psi)\|_r+cM^\alpha\sup_{\tau>0} \tau^{\beta+\delta}\| u(\tau)- v(\tau)\|_r
\end{equation*}
choosing $M>0$ so that $cM^\alpha<\frac{1}{2}$ we achieve \eqref{9}.

Moreover part: Note that  
\[
u(t)-v(t)=e^{-t\la/2}(u(t/2)-v(t/2))+\mu\int_\frac{t}{2}^t e^{-(t-s)\la}(|\cdot|^{-b}[|u(s)|^\alpha u(s)-|v(s)|^\alpha v(s)])ds.
\]
Therefore for $q\in[r,\frac{d}{\tilde{s}_1})$   using \eqref{6}, one has \[\tilde{s}_1<\frac{d}{q}<\frac{d}{r} <\tilde{s}_2+2\qquad\text{and}\qquad \tilde{s}_1<\frac{d}{q}<b+\frac{d(\alpha+1)}{q}<\tilde{s}_2+2.\]
Then using Theorem \ref{ste} for $(r,q)$ and Proposition \ref{est0} for $(\frac{q}{\alpha+1},q)$ we have
\begin{eqnarray*}
&&\|u(t)-v(t)\|_q\\
&\lesssim&t^{-\frac{d}{2}(\frac{1}{r}-\frac{1}{q})}\|u(t/2)-v(t/2)\|_r+\int_\frac{t}{2}^t (t-s)^{-\frac{d\alpha}{2q}-\frac{b}{2}}(\| u(s)\|_q^\alpha+\|v(s)\|_q^\alpha)\| u(s)- v(s)\|_qds\\
&\lesssim&t^{-\frac{d}{2}(\frac{1}{r}-\frac{1}{q})}\|u(t/2)-v(t/2)\|_r+C_M^\alpha\int_\frac{t}{2}^t (t-s)^{-\frac{d\alpha}{2q}-\frac{b}{2}}s^{-\alpha\beta(q)}\| u(s)- v(s)\|_qds,
\end{eqnarray*}
using \eqref{g24},  $\beta(q):=\frac{2-b}{2\alpha}-\frac{d}{2q}$.

Let $\beta(q)=\frac{2-b}{2\alpha}-\frac{d}{2q}$. Then
\begin{eqnarray*}
t^{\beta(q)}\|u(t)-v(t)\|_q
&\lesssim&t^\beta\|u(t/2)-v(t/2)\|_r\\
&&+C_M^\alpha t^{\beta(q)}\sup_{\tau>0} \tau^{\beta(q)}\| u(\tau)- v(\tau)\|_q\int_\frac{t}{2}^t (t-s)^{-\frac{d\alpha}{2q}-\frac{b}{2}}s^{-(\alpha+1)\beta(q)}ds\\
&\lesssim&(t/2)^\beta\|u(t/2)-v(t/2)\|_r+C_M^\alpha t^{\beta(q)-\frac{d\alpha}{2q}-\frac{b}{2}-(\alpha+1)\beta(q)+1}\\
&&\cdot\sup_{\tau>0} \tau^{\beta(q)}\| u(\tau)- v(\tau)\|_q\int_\frac{1}{2}^1 (1-\sigma)^{-\frac{d\alpha}{2q}-\frac{b}{2}}\sigma^{-(\alpha+1)\beta(q)}d\sigma.
\end{eqnarray*}
Since $\beta(q)-\frac{d\alpha}{2q}-\frac{b}{2}-(\alpha+1)\beta(q)+1=0$ and $\frac{d\alpha}{2q}+\frac{b}{2}<\frac{d\alpha}{2r}+\frac{b}{2}<1$, we have
\begin{align*}
t^{\beta(q)}\|u(t)-v(t)\|_q\leq c\sup_{\tau>0}\tau^\beta\|u(\tau)-v(\tau)\|_r+cC_M^\alpha\sup_{\tau>0} \tau^{\beta(q)}\| u(\tau)- v(\tau)\|_q
\end{align*}
Choosing $M>0$ small enough so that $cC_M<\frac{1}{2}$, we get 
\[
\sup_{t>0}t^{\beta(q)}\|u(t)-v(t)\|_q\lesssim\sup_{t>0}t^\beta\|u(t)-v(t)\|_r\lesssim\sup_{t>0}t^{\beta}\|e^{-t\la}(\varphi-\psi)\|_r
\]
by using \eqref{9} with $\delta=0$.
This completes the proof.
\end{proof}

\begin{lemma}[A priori estimate]\label{23}
Suppose $ s < q  $, 
\[
\tilde{s}_1 <\frac{d}{q}  <b+ \frac{d(\alpha+1)}{s} < \tilde{s}_2+2,\qquad\frac{d}{2}\big(\frac{\alpha+1}{s}-\frac{1}{q}\big)<1-\frac{b}{2}.
\]
Assume $u$ be solution to \eqref{0} 
satisfying 
\[
\sup_{t>0}t^{\frac{2-b}{2\alpha}-\frac{d}{2s}}\|u(t)\|_s\leq A<\infty
\]
then
\[
\sup_{t>0}t^{\frac{2-b}{2\alpha}-\frac{d}{2q}}\|u(t)\|_q\lesssim A(1+A^\alpha)=:C_A<\infty
\]
with $C_A\to0$ as $A\to0$.
\end{lemma}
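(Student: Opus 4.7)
The plan is to use the ``shifted'' mild-solution identity
\[
u(t) = e^{-(t/2)\la} u(t/2) + \mu \int_{t/2}^t e^{-(t-\tau)\la}\bigl(|\cdot|^{-b}|u(\tau)|^\alpha u(\tau)\bigr)\,d\tau,
\]
which avoids any reference to initial data (none appears in the hypotheses). Writing $\beta(q):=\frac{2-b}{2\alpha}-\frac{d}{2q}$, the input is $\|u(\tau)\|_s \leq A\,\tau^{-\beta(s)}$, and the goal is the analogous estimate with $\beta(q)$.

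For the linear piece I would apply Theorem \ref{ste} with exponent pair $(s,q)$; the admissibility $\tilde{s}_1<d/q\leq d/s<\tilde{s}_2+2$ follows from $s<q$, from $\tilde{s}_1<d/q$, and from the fact that $b+\tfrac{d(\alpha+1)}{s}<\tilde{s}_2+2$ forces $d/s<(\tilde{s}_2+2-b)/(\alpha+1)<\tilde{s}_2+2$. This yields
\[
\|e^{-(t/2)\la}u(t/2)\|_q \lesssim t^{-\frac{d}{2}(\frac{1}{s}-\frac{1}{q})}\|u(t/2)\|_s \lesssim A\,t^{-\beta(q)},
\]
after substituting the assumed decay at time $t/2$.

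For the Duhamel piece I would invoke Proposition \ref{est0} with pair $\bigl(\tfrac{s}{\alpha+1},q\bigr)$: the range \eqref{p1} there coincides exactly with the lemma's hypothesis $\tilde{s}_1<d/q<b+\tfrac{d(\alpha+1)}{s}<\tilde{s}_2+2$. This gives
\[
\|e^{-(t-\tau)\la}(|\cdot|^{-b}|u(\tau)|^\alpha u(\tau))\|_q \lesssim (t-\tau)^{-\gamma}\|u(\tau)\|_s^{\alpha+1} \leq A^{\alpha+1}(t-\tau)^{-\gamma}\tau^{-(\alpha+1)\beta(s)},
\]
where $\gamma:=\frac{d}{2}\bigl(\frac{\alpha+1}{s}-\frac{1}{q}\bigr)+\frac{b}{2}$. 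Integrating over $[t/2,t]$ and substituting $\tau=t\sigma$ reduces this to $t^{1-\gamma-(\alpha+1)\beta(s)}\int_{1/2}^1(1-\sigma)^{-\gamma}\sigma^{-(\alpha+1)\beta(s)}\,d\sigma$. A direct algebraic check using $q_c=d\alpha/(2-b)$ yields $1-\gamma-(\alpha+1)\beta(s)=-\beta(q)$, so the time exponent matches the target scaling exactly. The remaining $\sigma$-integral is finite because $\gamma<1$ by the second hypothesis (handling the endpoint $\sigma=1$) and because $\sigma$ is bounded away from $0$ on $[1/2,1]$, so $\sigma^{-(\alpha+1)\beta(s)}$ stays bounded irrespective of the sign of $\beta(s)$.

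Combining the two pieces gives $t^{\beta(q)}\|u(t)\|_q \lesssim A + A^{\alpha+1} \lesssim A(1+A^\alpha)=:C_A$, and clearly $C_A\to 0$ as $A\to 0$. There is no serious obstacle: the essential bookkeeping is simply to check that the lemma's hypotheses supply precisely the admissibility ranges required by Theorem \ref{ste} and Proposition \ref{est0}, and to verify the arithmetic $1-\gamma-(\alpha+1)\beta(s)=-\beta(q)$. The truncation at $t/2$, rather than $0$, is what allows us to dispense with an extra hypothesis like $(\alpha+1)\beta(s)<1$ that would otherwise be needed for integrability near $\tau=0$.
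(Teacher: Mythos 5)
Your proposal is correct and follows essentially the same route as the paper's proof: the same decomposition of $u(t)$ at time $t/2$, Theorem \ref{ste} with the pair $(s,q)$ for the linear piece, Proposition \ref{est0} with $\bigl(\tfrac{s}{\alpha+1},q\bigr)$ for the Duhamel piece, the substitution $\tau=t\sigma$, and the exponent identity $1-\gamma-(\alpha+1)\beta(s)=-\beta(q)$. Your explicit admissibility check for Theorem \ref{ste} and the remark that truncating at $t/2$ removes any need for $(\alpha+1)\beta(s)<1$ are accurate refinements of what the paper leaves implicit.
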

\begin{proof}[{\bf Proof}]
Note that
\[
u(t)=e^{-\frac{t}{2}\la}u(t/2)+\mu\int_{t/2}^te^{-(t-\sigma )\la}(|\cdot|^{-b} (|u(\sigma)|^\alpha u(\sigma))d\sigma.
\] 
 Using Theorem \ref{ste} for $(p,q)=(s,q)$ and Proposition \ref{est0} for $(p,q)=(\frac{s}{\alpha+1},q)$ we have 
\begin{eqnarray*}
\|u(t)\|_q
&\lesssim&t^{-\frac{d}{2}(\frac{1}{s}-\frac{1}{q})}\|u(t/2)\|_s+\int_{t/2}^t(t-\tau)^{-\frac{d}{2}(\frac{\alpha+1}{s}-\frac{1}{q})-\frac{b}{2}}\||u(\tau)|^\alpha u(\tau)\|_{\frac{s}{\alpha+1}}d\tau\\
&\lesssim&t^{\frac{d}{2q}-\frac{2-b}{2\alpha}}A+\int_{t/2}^t(t-\tau)^{-\frac{d}{2}(\frac{\alpha+1}{s}-\frac{1}{q})}\| u(\tau)\|_s^{\alpha+1}d\tau\\
&\lesssim&t^{\frac{d}{2q}-\frac{2-b}{2\alpha}}A+A^{\alpha+1}\int_{t/2}^t(t-\tau)^{-\frac{d}{2}(\frac{\alpha+1}{s}-\frac{1}{q})-\frac{b}{2}}\tau^{-\beta(s)(\alpha+1)}d\tau\\
&\lesssim&t^{\frac{d}{2q}-\frac{2-b}{2\alpha}}A+A^{\alpha+1}t^{1-\frac{d}{2}(\frac{\alpha+1}{s}-\frac{1}{q})-\frac{b}{2}-\beta(s)(\alpha+1)}\int_{1/2}^1(1-\sigma)^{-\frac{d}{2}(\frac{\alpha+1}{s}-\frac{1}{q})-\frac{b}{2}}\sigma^{-\beta(s)(\alpha+1)}d\sigma\\
&\lesssim&t^{\frac{d}{2q}-\frac{2-b}{2\alpha}}A+A^{\alpha+1}t^{\frac{d}{2q}-\frac{2-b}{2\alpha}}=A(1+A^\alpha)t^{\frac{d}{2q}-\frac{2-b}{2\alpha}}
\end{eqnarray*} as $\int_{1/2}^1(1-\sigma)^{-\frac{d}{2}(\frac{\alpha+1}{s}-\frac{1}{q})-\frac{b}{2}}\sigma^{-\beta(s)(\alpha+1)}d\sigma<\infty$.
This completes the proof.
\end{proof}
\begin{proof}[{\bf Proof of Theorem \ref{local}  \eqref{ii}}] {\bf The case $q=q_c$.} First note that if the condition \eqref{8} of Theorem \ref{global2} is satisfied on $[0,T)$ (in place of $(0,\infty)$ i.e. if
\begin{equation}\label{8a}
\sup_{t\in[0,T)}t^\beta\|e^{-t\la} \varphi\|_r \leq\rho.
\end{equation}
happens, then the conclusion holds with $(0,\infty)$ replaced by $(0,T)$ i.e. there exists a unique solution $u$ on $[0,T)$ such that \begin{equation*}
\sup_{t\in[0,T)}t^\beta\|u(t)\|_r \leq M.
\end{equation*}

In view of this, it is enough to prove \eqref{8a} with a $T>0$ for a given $\varphi\in L^{q_c}(\rd)$. For $\epsilon>0$,  there exists $\psi\in L^{q_c}(\rd)\cap L^r(\rd)$ such that $\|\varphi-\psi\|_{q_c}<\epsilon$ (by density). Then for $0\leq t<T$, by Theorem \ref{ste}
\begin{align*}
t^\beta\|e^{-t\la} \varphi\|_r &\leq t^\beta\|e^{-t\la}( \varphi-\psi)\|_r +t^\beta\|e^{-t\la} \psi\|_r \\
&\leq ct^\beta t^{-\beta}\| \varphi-\psi\|_{q_c} +ct^\beta\| \psi\|_r\\
&\leq c\epsilon+T^\beta\| \psi\|_r\leq 2c\epsilon
\end{align*} by choosing $T > 0$ small enough depending on $\epsilon$, $\|\psi\|_r$ (which again depends on  $\epsilon$). Now take $\epsilon=\frac{\rho}{2c}$, to achieve \eqref{8a}. This completes the proof. 
\end{proof}
\begin{proof}[{\bf Proof of Theorem \ref{global}}]
For part \eqref{global-1}, using Theorem \ref{ste} note that
\[
t^\beta\|e^{-t\la} \varphi\|_r\leq ct^\beta t^{-\frac{d}{2}(\frac{1}{q_c}-\frac{1}{r})}\|\varphi\|_{q_c}=c\|\varphi\|_{q_c}
\]for all $t>0$ therefore it satisfies the hypothesis \eqref{8} of Theorem \ref{global2} for $\|\varphi\|_{q_c}$ small enough. Hence the result follows from Theorem \ref{global2}.

For part \eqref{global-2}, with the given condition on $\sigma$, the data satisfies the condition in part \eqref{global-1}.

For part \eqref{global-3}, write $|\cdot|^{-\frac{2-b}{\alpha}}=\varphi_1+\varphi_2$ with $\varphi_1=|\cdot|^{-\frac{2-b}{\alpha}}\chi_{\{|x|\leq1\}}$ and $\varphi_2=|\cdot|^{-\frac{2-b}{\alpha}}\chi_{\{|x|>1\}}$, then $\varphi_1\in L^s(\rd)$ for $1\leq s<\frac{d\alpha}{2-b}=q_c$ and $\varphi_2\in L^\sigma(\rd)$ for $q_c<\sigma\leq\infty$.  Then using Theorem \ref{ste}, and the fact that $\tilde{s}_1<\frac{d}{r}<\frac{d}{q_c}<\tilde{s}_2+2$ (see \eqref{6}), $e^{-\la}\varphi_j\in L^r(\rd)$ hence $e^{-\la}|\cdot|^{-\frac{2-b}{\alpha}}\in L^r(\rd)$. By homogeneity of $|\cdot|^{-\frac{2-b}{\alpha}}$ it follows that $\sup_{t>0}t^\beta\|e^{-t\la}|\cdot|^{-\frac{2-b}{\alpha}}\|_r<\infty$. Using positivity of $e^{-t\la}$, from $|\varphi|\leq c|\cdot|^{-\frac{2-b}{\alpha}}$ it follows that, $\sup_{t>0}t^\beta\|e^{-t\la}\varphi\|_r<\infty$. Choosing $c>0$ small enough, $\varphi$ satisfies condition \eqref{8} of Theorem \ref{global2}.
\end{proof}

\subsection{Selfsimilar Solution}
\begin{proof}[{\bf Proof of Theorem \ref{selfsimilar}}]
Let $r$ as in \eqref{6}. Since $|\varphi|\leq\|\omega\|_\infty|\cdot|^{-\frac{2-b}{\alpha}}$, proceeding as in the proof of Theorem \ref{global} \eqref{global-3} above, we achieve  $\varphi$ satisfies \eqref{8}. Then \eqref{0} has a unique global solution $u$ by Theorem \ref{global2} satisfying \eqref{9}.

Since $\varphi$ is homogeneous of degree $-\frac{2-b}{d}$ it follows that $\varphi_\lambda=\varphi$ for all $\lambda$ where $\varphi_\lambda(x) = \lambda^{\frac{2-b}{\alpha}} \varphi(\lambda x)$. Therefore $\varphi_\lambda$ also satisfies \eqref{8} and has a unique global solution $\tilde{u}_\lambda$ by Theorem \ref{global2} satisfying $\sup_{t>0}t^\beta\|\tilde{u}_\lambda(t)\|_r \leq M$. By computation one has $\tilde{u}_\lambda=u_\lambda$, where $u_\lambda$ as defined in \eqref{scl}.
Since $\varphi_\lambda=\varphi$, by uniqueness of solution, we have $u_\lambda=u$.
\end{proof}
\section{Asymptotic Behaviour}\label{AB}
\subsection{Case I: Nonlinear Behaviour}\label{AB1}
\begin{proof}[{\bf Proof of Theorem \ref{asym} \eqref{asym1}}]
Set $\beta(q)=\frac{2-b}{2\alpha}-\frac{d}{2q}$ and $\psi(x)=\omega(x)|x|^{-\frac{2-b}{\alpha}}$. 
Note that $|\varphi(x)-\psi(x)|=0$ for $|x|\geq A$ and $|\varphi(x)-\psi(x)|\leq (c+\|\omega\|_\infty)|x|^{-\frac{2-b}{\alpha}}.$  Hence, 
\[
|\varphi-\psi|\leq  (c+\|\omega\|_\infty)\varphi_1
\]
where $\varphi_1=|x|^{-\frac{2-b}{\alpha}}\chi_{\{|x|\leq A\}}\in L^s(\rd)$ with any $1\leq s<\frac{d\alpha}{2-b}=q_c$. Using \eqref{6}, for any choice of $s$ with $d/q_c<d/s<\tilde{s}_2+2$, we have  by Theorem \ref{ste} and \eqref{6} that
\[
t^{\frac{d}{2}(\frac{1}{s}-\frac{1}{r})}\|e^{-t\la}(\varphi-\psi)\|_r\lesssim\| \varphi-\psi\|_s\leq(c+\|\omega\|_\infty)\|\varphi_1\|_s<\infty.
\]
This implies (letting $\delta:=\frac{d}{2}(\frac{1}{s}-\frac{1}{r})-\beta(r)=\frac{d}{2s}-\frac{2-b}{2\alpha}$) 
\begin{equation}\label{12}
\sup_{t>0} t^{\beta(r)+\delta}\|e^{-t\la}(\varphi-\psi)\|_r<\infty\quad\text{for all }0<\delta<\frac{\tilde{s}_2+2}{2}-\frac{2-b}{2\alpha}.
\end{equation}
Note that $\beta(r)(\alpha + 1)<1$ and therefore for $0<\delta\leq\frac{\tilde{s}_2+2}{2}-\frac{2-b}{2\alpha}$ small enough one has $\beta(r)(\alpha + 1)+\delta<1$, applying Theorem \ref{global2} (specifically \eqref{9}) we have
\begin{equation}\label{10}
\sup_{t>0} t^{\beta(r)+\delta}\|u(t)-u_\mathcal{S}(t)\|_r<\infty
\end{equation}
for $\delta>0$ small enough. This proves the result for $q=r$.

Note that
\[
u(t) - u_\mathcal{S}(t) = e^{-\frac{t}{2}\la }(u(t/2) - u_\mathcal{S}(t/2)) + \mu\int_{t/2}^te^{-(t-\sigma )\la}(|\cdot|^{-b} (|u(\sigma)|^\alpha u(\sigma) -|u_\mathcal{S}(\sigma)|^\alpha u_\mathcal{S}(\sigma)))d\sigma
\]
Therefore for $q\in[r,\frac{d}{\tilde{s}_1})$   using \eqref{6}, one has \[\tilde{s}_1<\frac{d}{q}<\frac{d}{r} <\tilde{s}_2+2\qquad\text{and}\qquad \tilde{s}_1<\frac{d}{q}<b+\frac{d(\alpha+1)}{q}<\tilde{s}_2+2.\]
Thus applying Theorem A with the pair $(r,q)$ and Proposition \ref{est0} with the pair $(\frac{r}{\alpha+1},q)$ we get
\begin{eqnarray*}
t^{\beta(q)+\delta}\|u(t) - u_\mathcal{S}(t)\|_{q}&\lesssim& t^{\beta(q)+\delta-\frac{d}{2}(\frac{1}{r}-\frac{1}{q})}\|u(t/2) - u_\mathcal{S}(t/2)\|_r+t^{\beta(q)+\delta}\\
&&\cdot\int_{t/2}^t(t-\sigma)^{-\frac{d}{2}(\frac{\alpha+1}{q}-\frac{1}{q})-\frac{b}{2}}\||u(\sigma)|^\alpha u(\sigma) -|u_\mathcal{S}(\sigma)|^\alpha u_\mathcal{S}(\sigma)\|_{\frac{q}{\alpha+1}}d\sigma.
\end{eqnarray*}
Using \eqref{10} for the first term in the LHS we have
\begin{eqnarray*}
t^{\beta(q)+\delta}\|u(t) - u_\mathcal{S}(t)\|_{q}&\lesssim&t^{\beta(q)+\delta-\frac{d}{2}(\frac{1}{r}-\frac{1}{q})-\beta(r)-\delta}+t^{\beta(q)+\delta}\\
&&\cdot\int_{t/2}^t(t-\sigma)^{-\frac{d\alpha}{2q}-\frac{b}{2}}(\|u(\sigma)\|_{q}^\alpha+\| u_\mathcal{S}(\sigma)\|_{q}^\alpha)\|u(\sigma) - u_\mathcal{S}(\sigma)\|_{q}d\sigma.
\end{eqnarray*}
Using Theorem \ref{global2} part \eqref{g24} we further have
\begin{eqnarray*}
t^{\beta(q)+\delta}\|u(t) - u_\mathcal{S}(t)\|_{q}
&\lesssim&t^{0}+C_M^\alpha t^{\beta(q)+\delta}\int_{t/2}^t(t-\sigma)^{-\frac{d\alpha}{2q}-\frac{b}{2}}\sigma^{-\frac{2-b}{2}+\frac{d\alpha}{2q}}\|u(\sigma) - u_\mathcal{S}(\sigma)\|_{q}d\sigma\\
&\lesssim&1+C_M^\alpha t^{\beta(q)+\delta}\int_{t/2}^t(t-\sigma)^{-\frac{d\alpha}{2q}-\frac{b}{2}}\sigma^{-\frac{2-b}{2}+\frac{d\alpha}{2q}-\beta(q)-\delta}\\
&&\cdot\sigma^{\beta(q)+\delta}\|u(\sigma) - u_\mathcal{S}(\sigma)\|_{q}d\sigma\\
&\lesssim&1+C_M^\alpha t^{\beta(q)+\delta}\int_{t/2}^t(t-\sigma)^{-\frac{d\alpha}{2q}-\frac{b}{2}}\sigma^{-\frac{2-b}{2}+\frac{d\alpha}{2q}-\beta(q)-\delta}d\sigma\\
&&\cdot\sup_{\tau>0}\tau^{\beta(q)+\delta}\|u(\tau) - u_\mathcal{S}(\tau)\|_{q}\\
&\lesssim&1+C_M^\alpha t^{0}\int_{1/2}^1(1-\sigma)^{-\frac{d\alpha}{2q}-\frac{b}{2}}\sigma^{-\frac{2-b}{2}+\frac{d\alpha}{2q}-\beta(q)-\delta}d\sigma\\
&&\cdot\sup_{\tau>0}\tau^{\beta(q)+\delta}\|u(\tau) - u_\mathcal{S}(\tau)\|_{q}
\end{eqnarray*}
where $q$ is so that $ $. Since $\frac{d\alpha}{2q}+\frac{b}{2}<\frac{d\alpha}{2r}+\frac{b}{2}<1$ we obtain 
\begin{equation*}
\sup_{t>0}t^{\beta(q)+\delta}\|u(t) - u_\mathcal{S}(t)\|_{q}\leq c+cC_M^\alpha  \sup_{t>0}t^{\beta(q)+\delta}\|u(t) - u_\mathcal{S}(t)\|_{q}
\end{equation*} and $C_M\to0$ as $M\to0$. Choosing $M>0$ small enough so that $cC_M^\alpha<1$, we achieve \begin{equation}\label{p7}
\sup_{t>0}t^{\beta(q)+\delta}\|u(t) - u_\mathcal{S}(t)\|_{q}\lesssim1 \Longrightarrow\|u(t) - u_\mathcal{S}(t)\|_{q}\leq ct^{-\beta(q)-\delta}\ \forall\ t>0.
\end{equation}

Now $u_{\mathcal{S}}(t,x)=\lambda^{\frac{2-b}{\alpha}}u_{\mathcal{S}}(\lambda^2t,\lambda x)$ for all $\lambda$, and hence taking $\lambda=\frac{1}{\sqrt{t}}$, we have $u_{\mathcal{S}}(t,x)=t^{-\frac{2-b}{2\alpha}}u_{\mathcal{S}}(1,\frac{x}{\sqrt{t}})$ and hence
\begin{align*}
\|u_{\mathcal{S}}(t)\|_q=t^{-\frac{2-b}{2\alpha}}\left(\int_{\rd}|u_{\mathcal{S}}(1,{x}/{\sqrt{t}})|^qdx\right)^{1/q}=t^{-\frac{2-b}{2\alpha}}\left(t^{\frac{d}{2}}\int_{\rd}|u_{\mathcal{S}}(1,y)|^qdy\right)^{1/q}=t^{-\beta(q)}\|u_{\mathcal{S}}(1)\|_q.
\end{align*} Therefore using \eqref{p7}, for large $t>0$ we have $\|u(t)-u_{\mathcal{S}}(t)\|_q\leq\frac{1}{2}\|u_{\mathcal{S}}(t)\|_q$ and thus
\begin{align}\label{p8}
\|u(t)\|_q\geq\|u_{\mathcal{S}}(t)\|_q-\|u(t) - u_\mathcal{S}(t)\|_q\geq \frac{1}{2}\|u_{\mathcal{S}}(t)\|_q=\frac{1}{2}t^{-\beta(q)}\|u_{\mathcal{S}}(1)\|_q.
\end{align}
Also
\begin{align}\label{p9}
\|u(t)\|_q\leq\|u_{\mathcal{S}}(t)\|_q+\|u(t) - u_\mathcal{S}(t)\|_q\leq \frac{3}{2}\|u_{\mathcal{S}}(t)\|_q=\frac{3}{2}t^{-\beta(q)}\|u_{\mathcal{S}}(1)\|_q.
\end{align}
This completes the proof.
\end{proof}

\subsection{Case II: Linear Behaviour}
For this case, we need the following technical result to be proved in the Appendix.
\begin{lemma}\label{11}
Assume that $0<b<\min(2,d)$ and $\frac{2-b}{\tilde{s}_2+2}<\alpha<\frac{2-b}{\tilde{s}_1}$. Let $\alpha_1$ be real number such that \[\max\left(\frac{2-b}{\tilde{s}_2+2},\frac{\tilde{s}_1\alpha}{\tilde{s}_2+2-b-\tilde{s}_1\alpha}\right)<\alpha_1 < \alpha<\frac{2-b}{\tilde{s}_1}.
\]
 Let $r_1$ be a real number satisfying
 \[
 \max\left(\frac{(\alpha_1+1)d}{\tilde{s}_2+2-b},\frac{d\alpha_1}{2-b}\right)<r_1<\min\left(\frac{d\alpha_1(\alpha_1+1)}{(2-b(\alpha_1+1))_+},\frac{d\alpha_1}{\tilde{s}_1\alpha}\right).
 \]
 Let \begin{align*}
  r_2&=\frac{\alpha}{\alpha_1}r_1\\
\beta_1&=\frac{2-b}{2\alpha_1}-\frac{d}{2r_1}\\
\beta_2&=\frac{2-b}{2\alpha}-\frac{d}{2r_2}\\
r_{12}&=\frac{\alpha+1}{\alpha_1+1}r_1\\
\beta_{12}&=\frac{\alpha_1+1}{\alpha+1}\beta_1.
 \end{align*}
Then one has the following
\begin{enumerate}
\item $\beta_1,\beta_2,\beta_{12}>0$
\item $\tilde{s}_1<\frac{d}{r_1}<b+\frac{(\alpha+1)d}{r_{12}}<\tilde{s}_2+2$,  $\tilde{s}_1<\frac{d}{r_2}<b+\frac{(\alpha+1)d}{r_2}<\tilde{s}_2+2$\label{11ii}
\item $ \frac{d}{2} (\frac{\alpha+1}{r_{12}} - \frac{1}{r_1} ) + \frac{b}{d} = \frac{d\alpha}{2r_2}  + \frac{b}{2} < 1$\label{11iii}
\item $\beta_2(\alpha+1),\beta_{12}(\alpha+1)<1$\label{11iv}
\item $\beta_2 -\frac{d\alpha}{2r_2}  -\frac{b}{2} - \beta_2(\alpha + 1) + 1 = 0$
\item $\beta_1- \frac{d}{2}  (\frac{\alpha+1}{r_{12}} - \frac{1}{r_1} ) -\frac{b}{2}  - \beta_{12}(\alpha + 1) + 1 = 0$.
\end{enumerate}
\end{lemma}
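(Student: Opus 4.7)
The plan is to observe that every one of the six conclusions is a purely algebraic relation among the scalars $b$, $d$, $\alpha$, $\alpha_1$, $r_1$, since $r_2$, $r_{12}$, $\beta_1$, $\beta_2$, and $\beta_{12}$ are all explicit functions of these. I would then trace each such relation back to the corresponding piece of the two compound bounds placed on $\alpha_1$ and $r_1$ in the hypothesis.

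First I would record two algebraic identities that collapse many of the ungainly expressions. The definition $r_{12} = \frac{\alpha+1}{\alpha_1+1} r_1$ gives $\frac{\alpha+1}{r_{12}} = \frac{\alpha_1+1}{r_1}$, hence
\[
\frac{d}{2}\Bigl(\frac{\alpha+1}{r_{12}} - \frac{1}{r_1}\Bigr) = \frac{d\alpha_1}{2r_1},
\]
and similarly $r_2 = \frac{\alpha}{\alpha_1} r_1$ yields $\frac{d\alpha}{2r_2} = \frac{d\alpha_1}{2r_1}$. These force the equality in (3) without effort, reducing (3) to the inequality $\frac{d\alpha_1}{2r_1} + \frac{b}{2} < 1$, i.e.\ to the lower bound $r_1 > \frac{d\alpha_1}{2-b}$. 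The same reductions produce $\beta_2 = \frac{\alpha_1}{\alpha}\beta_1$ and $\beta_{12}(\alpha+1) = \beta_1(\alpha_1+1)$; substituting these into (5) and (6) reduces them to the defining relations of $\beta_2$ and $\beta_1$, so those two items require no further work.

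Next I would dispatch (1), (2), and (4). For (1), $\beta_1 > 0$ is again the lower bound $r_1 > \frac{d\alpha_1}{2-b}$, after which the two ratios above give positivity of $\beta_2$ and $\beta_{12}$. For (2), the $r_1$-chain splits cleanly: the left inequality uses $r_1 < \frac{d\alpha_1}{\tilde{s}_1\alpha} < \frac{d}{\tilde{s}_1}$ (the second step by $\alpha_1 < \alpha$), the middle one is trivial, and the right one is the lower bound $r_1 > \frac{(\alpha_1+1)d}{\tilde{s}_2+2-b}$. The $r_2$-chain is analogous; the only mildly subtle piece is the right endpoint $b + \frac{(\alpha+1)d}{r_2} < \tilde{s}_2 + 2$, which I would rewrite as $r_1 > \frac{(\alpha+1)\alpha_1 d}{\alpha(\tilde{s}_2+2-b)}$ and then dominate by the stronger hypothesis $r_1 > \frac{(\alpha_1+1)d}{\tilde{s}_2+2-b}$, using the monotonicity inequality $\frac{(\alpha+1)\alpha_1}{\alpha} < \alpha_1 + 1$ that comes from $\alpha_1 < \alpha$. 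For (4), the key relation $\beta_{12}(\alpha+1) = \beta_1(\alpha_1+1) < 1$ translates into $r_1 < \frac{d\alpha_1(\alpha_1+1)}{(2-b(\alpha_1+1))_+}$, which is precisely the other upper bound on $r_1$; the companion statement $\beta_2(\alpha+1) < 1$ then follows, after a short monotonicity check in $\alpha_1$, because the corresponding upper threshold for $\alpha$ is larger than the one for $\alpha_1$.

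The hard part is really just bookkeeping: the six conclusions draw on four different bounds on $r_1$ and two on $\alpha_1$, and one must keep track of which one is needed where. Worth remarking is that the lower bound $\alpha_1 > \frac{\tilde{s}_1\alpha}{\tilde{s}_2+2-b-\tilde{s}_1\alpha}$ never appears in the proof of items (1)--(6) themselves; its role, together with $\frac{2-b}{\tilde{s}_2+2}<\alpha_1<\alpha<\frac{2-b}{\tilde{s}_1}$, is only to ensure that the admissible interval for $r_1$ in the statement is nonempty, i.e.\ that $\frac{(\alpha_1+1)d}{\tilde{s}_2+2-b}$ really lies below $\frac{d\alpha_1}{\tilde{s}_1\alpha}$, so that the hypothesis can in fact be realized.
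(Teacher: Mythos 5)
Your proposal is correct and follows essentially the same route as the paper, which organizes the identical algebra into the appendix Lemmas \ref{a1} and \ref{a2}: translate the hypotheses on $r_1$ into the inequalities $\tilde{s}_1<\frac{d\alpha_1}{r_1\alpha}<\frac{d}{r_1}<b+\frac{(\alpha_1+1)d}{r_1}<\tilde{s}_2+2$, $\frac{d\alpha_1}{2r_1}+\frac{b}{2}<1$, $\beta_1(\alpha_1+1)<1$, then transfer them to $r_2$, $r_{12}$, $\beta_2$, $\beta_{12}$ via $\frac{\alpha}{r_2}=\frac{\alpha_1}{r_1}$, $\frac{\alpha+1}{r_{12}}=\frac{\alpha_1+1}{r_1}$ and the monotonicity $\frac{\alpha_1(\alpha+1)}{\alpha(\alpha_1+1)}<1$, with (5)--(6) reducing to the definitions of $\beta_1,\beta_2$ (the "$\frac{b}{d}$" in item (3) being a typo for "$\frac{b}{2}$", as you implicitly treat it). Your closing remark that the lower bound on $\alpha_1$ only guarantees nonemptiness of the admissible $r_1$-interval matches the paper's proof of Lemma \ref{a1}.
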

With the above, lemma in hand we now prove a variant of Theorem \ref{global2} which will be useful to prove Theorem \ref{asym} \eqref{asym2}.
\begin{theorem}\label{asym 1}
Let $0 < b < \min(2, d )$ and $\frac{2-b}{\tilde{s}_2+2}<\alpha<\frac{2-b}{\tilde{s}_1}$. Suppose that \[\max\left(\frac{2-b}{\tilde{s}_2+2},\frac{\tilde{s}_1\alpha}{\tilde{s}_2+2-b-\tilde{s}_1\alpha}\right)<\alpha_1 < \alpha<\frac{2-b}{\tilde{s}_1}.
 \]
Let $r_1, r_2, r_{12},\beta_1,\beta_2$ are real numbers as in Lemma \ref{11}. Suppose further that $M > 0$ satisfies the
inequality
$KM^\alpha <1$, where $K$ is a positive constant. Choose $R > 0$ such that
\[cR+KM^{\alpha+1} \leq M.
\]

Let $\varphi$ be a tempered distribution such that
\begin{equation}\label{16}
\sup_{t>0} t^{\beta_1} \|e^{-t\la}\varphi\|_{r_1} \leq R,\qquad \sup_{t>0} t^{\beta_2} \|e^{-t\la}\varphi\|_{r_2} \leq R.
\end{equation}
Then there exists a unique global solution $u$ of \eqref{0} such that
\begin{equation}\label{20}
\sup_{t>0} t^{\beta_1} \| u(t)\|_{r_1}\leq M ,\qquad \sup_{t>0} t^{\beta_2} \| u(t)\|_{r_2}\leq M.
 \end{equation}
 
Furthermore, \begin{enumerate}
\item  $\sup_{t\geq t_q} t^{ \frac{2-b}{2\alpha_1}-\frac{d}{2q}} \|u(t)\|_q \leq C_M<\infty$, for all $q \in [r_1, \frac{d}{\tilde{s}_1})$ with $C_M\to0$ as $M\to0$ \label{as1}
\item  $\sup_{t>0} t^{ \frac{2-b}{2\alpha}-\frac{d}{2q}} \|u(t)\|_q\leq C_M < \infty$, for all $q \in [r_2,\frac{d}{\tilde{s}_1})$ with $C_M\to0$ as $M\to0$\label{as2}.
\end{enumerate}

\end{theorem}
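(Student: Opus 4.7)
The plan is to mimic the fixed-point argument of Theorem \ref{global2} on a space carrying two simultaneous weighted bounds, one at the scale $(\alpha_1, r_1)$ and one at the scale $(\alpha, r_2)$. Define
\[
B_M=\Bigl\{u:(0,\infty)\to\mathcal{S}'(\rd):\sup_{t>0}t^{\beta_1}\|u(t)\|_{r_1}\leq M,\ \sup_{t>0}t^{\beta_2}\|u(t)\|_{r_2}\leq M\Bigr\}
\]
with metric $d(u,v)=\max\bigl(\sup_{t>0}t^{\beta_1}\|u(t)-v(t)\|_{r_1},\ \sup_{t>0}t^{\beta_2}\|u(t)-v(t)\|_{r_2}\bigr)$. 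A key preliminary observation is that $r_1<r_{12}<r_2$ and that Hölder interpolation produces the intermediate control $\sup_{t>0}t^{\beta_{12}}\|u(t)\|_{r_{12}}\leq M$ (and the same for $u-v$) with the interpolation weight forced to be $\theta=\tfrac{1}{\alpha+1}$. This intermediate bound is precisely at the scale at which the $(\alpha+1)$-power nonlinearity naturally lands.

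Next I would estimate $\mathcal{J}_\varphi(u)-\mathcal{J}_\psi(v)$ in both norms. In $L^{r_2}$, apply Proposition \ref{est0} with the pair $(r_2/(\alpha+1),r_2)$, which is admissible by Lemma \ref{11}\eqref{11ii}--\eqref{11iii}; the time-exponent cancellation is Lemma \ref{11}(5) and the Beta integrability follows from \eqref{11iv}. The computation is the verbatim analogue of the $r_2$-step in the proof of Theorem \ref{global2}. In $L^{r_1}$, apply Proposition \ref{est0} with the pair $(r_{12}/(\alpha+1),r_1)$, using the pointwise nonlinear Lipschitz bound
\[
\bigl\| |u|^\alpha u-|v|^\alpha v\bigr\|_{r_{12}/(\alpha+1)}\lesssim (\|u\|_{r_{12}}^\alpha+\|v\|_{r_{12}}^\alpha)\|u-v\|_{r_{12}}
\]
together with the interpolated $r_{12}$-bound from the previous paragraph; the crucial identity $\beta_1-\tfrac{d}{2}(\tfrac{\alpha+1}{r_{12}}-\tfrac{1}{r_1})-\tfrac{b}{2}-\beta_{12}(\alpha+1)+1=0$ is exactly Lemma \ref{11}(6). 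Combining both estimates with hypothesis \eqref{16} yields
\[
d(\mathcal{J}_\varphi(u),\mathcal{J}_\psi(v))\leq c\max_{i=1,2}\sup_{t>0}t^{\beta_i}\|e^{-t\la}(\varphi-\psi)\|_{r_i}+KM^\alpha d(u,v),
\]
and the assumptions $cR+KM^{\alpha+1}\leq M$ and $KM^\alpha<1$ make $\mathcal{J}_\varphi$ a contraction on $B_M$, producing the unique fixed point in $B_M$.

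For parts \eqref{as1}--\eqref{as2}, I would bootstrap exactly as in Theorem \ref{global2}\eqref{g24}. For \eqref{as2}, iterate Lemma \ref{23} starting from the $r_2$-bound to promote the integrability index step by step until it sweeps out $[r_2,d/\tilde s_1)$; the telescoping sum argument from the proof of \eqref{g24} shows the iteration terminates in finitely many steps. For \eqref{as1}, split
\[
u(t)=e^{-(t/2)\la}u(t/2)+\mu\int_{t/2}^t e^{-(t-s)\la}\bigl(|\cdot|^{-b}|u(s)|^\alpha u(s)\bigr)\,ds,
\]
bound the linear piece by Theorem A on the $r_1$-control, which produces the rate $t^{d/(2q)-(2-b)/(2\alpha_1)}$ directly, and bound the Duhamel piece via Proposition \ref{est0} with the pair $(r_2/(\alpha+1),q)$ using the $r_2$-bound on $u$. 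The main obstacle throughout is exponent bookkeeping: with two distinct scaling exponents $\alpha_1,\alpha$ in play, essentially every computation rests on the algebraic identities (1)--(6) of Lemma \ref{11}, and the delicate check is that the strict inequalities supplied there yield \emph{strict} (not merely non-strict) admissibility in each invocation of Proposition \ref{est0}, so that the Beta-type integrals that appear remain finite; once Lemma \ref{11} is in place, the remainder of the argument is routine tracking.
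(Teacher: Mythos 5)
Your construction of the solution is essentially the paper's own proof: the same two-norm space $B_M$, the same Hölder interpolation to $r_{12}$ with weight $\theta=\frac{1}{\alpha+1}$ giving $\|u(s)\|_{r_{12}}\lesssim Ms^{-\beta_{12}}$, the same two applications of Proposition \ref{est0} with the pairs $(\frac{r_{12}}{\alpha+1},r_1)$ and $(\frac{r_2}{\alpha+1},r_2)$, and the identities of Lemma \ref{11} to close the contraction; likewise your treatment of part \eqref{as2} (iterating Lemma \ref{23} from the $r_2$-bound) is what the paper does.

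Part \eqref{as1}, however, is where your argument has a genuine gap. You propose to bound the Duhamel piece over $[t/2,t]$ by Proposition \ref{est0} with the pair $(\frac{r_2}{\alpha+1},q)$ using the $\alpha$-scale bound $\|u(s)\|_{r_2}\leq Ms^{-\beta_2}$. Two things go wrong. First, the admissibility condition $\frac{d}{q}\leq b+\frac{d(\alpha+1)}{r_2}$ is not guaranteed for $q$ near $r_1$: Lemma \ref{11} only gives $\frac{d}{r_1}<b+\frac{d(\alpha+1)}{r_{12}}$, and since $r_{12}<r_2$ this does not imply $\frac{d}{r_1}<b+\frac{d(\alpha+1)}{r_2}$ (e.g.\ $d=3$, $a\geq0$, $b=0.1$, $\alpha_1=0.7$, $\alpha=5$, $r_1=1.8$, $r_2=\frac{\alpha}{\alpha_1}r_1\approx12.86$ gives $\frac{d}{r_1}\approx1.67>1.5\approx b+\frac{d(\alpha+1)}{r_2}$). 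Second, and more fundamentally, even when admissible this estimate produces for the nonlinear term a decay of order $t^{1-\frac{d}{2}(\frac{\alpha+1}{r_2}-\frac{1}{q})-\frac{b}{2}-\beta_2(\alpha+1)}=t^{-(\frac{2-b}{2\alpha}-\frac{d}{2q})}$, which is strictly weaker than the required rate $t^{-(\frac{2-b}{2\alpha_1}-\frac{d}{2q})}$ for large $t$ because $\alpha_1<\alpha$; multiplying by $t^{\beta_1(q)}$ the Duhamel contribution grows like $t^{\frac{2-b}{2}(\frac{1}{\alpha_1}-\frac{1}{\alpha})}$, so the claimed supremum over $t\geq t_q$ is not obtained. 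The paper avoids this by estimating the nonlinearity through the $\alpha_1$-scale information in the \emph{same} space (Lemma \ref{c1}, starting from $s=r_1$ where $\beta_1=\beta_1(r_1)$, and then iterating exactly as in Theorem \ref{global2}\eqref{g24}); this yields the extra factor $t^{-\frac{2-b}{2}(\frac{\alpha}{\alpha_1}-1)}$, which is simply discarded for $t\geq1$ — this is precisely the origin of the restriction $t\geq t_q$ and of the hypothesis $t_0\geq1$ in Lemma \ref{c1}. Relatedly, a single step from $r_1$ to an arbitrary $q\in[r_1,\frac{d}{\tilde{s}_1})$ is in general impossible because of the constraint $\frac{d}{2}(\frac{\alpha+1}{s}-\frac{1}{q})<1-\frac{b}{2}$, so the iteration you invoke for \eqref{as2} is also needed for \eqref{as1}, with Lemma \ref{c1} in place of Lemma \ref{23}.
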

\begin{proof}[{\bf Proof}]
Let 
$$B_M:=\{u:(0,\infty)\to L^r(\rd)): \sup_{t>0} t^{\beta_1}\|u(t)\|_{r_1}\leq M, \sup_{t>0} t^{\beta_2}\|u(t)\|_{r_2}\leq M\}$$ and
\[
d(u,v)=: \max\left(\sup_{t>0} t^{\beta_1}\|u(t)-v(t)\|_{r_1},\sup_{t>0} t^{\beta_2}\|u(t)-v(t)\|_{r_2}\right)
\]
and
\[
\mathcal{J}_\varphi(u)(t)=e^{-t\la}\varphi+\mu\int_0^t e^{-(t-s)\la}(|\cdot|^{-b}|u(s)|^\alpha u(s))ds.
\]for $\varphi$ satisfies \eqref{16} and $u\in B_M$. 

Let $\varphi,\psi$ satisfy \eqref{16} and $u,v\in B_M$, then using Lemma \ref{11}, Proposition \ref{est0} for $(p,q)=(\frac{r_{12}}{\alpha+1},r_1)$ we have
\begin{eqnarray*}
&&\|\mathcal{J}_\varphi(u)(t)-\mathcal{J}_\psi(v)(t)\|_{r_1}\\
&\lesssim&\|e^{-t\la}\varphi-e^{-t\la}\psi\|_{r_1}+\int_0^t \|e^{-(t-s)\la}(|\cdot|^{-b}[|u(s)|^\alpha u(s)-|v(s)|^\alpha v(s)])\|_{r_1}ds\\
&\lesssim&\|e^{-t\la}(\varphi-\psi)\|_{r_1}+\int_0^t (t-s)^{-\frac{d}{2}(\frac{\alpha+1}{r_{12}}-\frac{1}{r_1})-\frac{b}{2}}\||u(s)|^\alpha u(s)-|v(s)|^\alpha v(s)\|_{\frac{r_{12}}{\alpha+1}}ds\\
&\lesssim&\|e^{-t\la}(\varphi-\psi)\|_{r_1}+\int_0^t (t-s)^{-\frac{d}{2}(\frac{\alpha+1}{r_{12}}-\frac{1}{r_1})-\frac{b}{2}}(\|u(s)\|_{r_{12}}^\alpha+\|v(s)\|_{r_{12}}^\alpha)\|u(s)- v(s)\|_{r_{12}}ds\\
&\lesssim&\|e^{-t\la}(\varphi-\psi)\|_{r_1}+M^\alpha d(u,v)\int_0^t (t-s)^{-\frac{d}{2}(\frac{\alpha+1}{r_{12}}-\frac{1}{r_1})-\frac{b}{2}}s^{-(\alpha+1)\beta_{12}}ds
\end{eqnarray*} as $\frac{1}{r_{12}}=\frac{1/(\alpha+1)}{r_1}+\frac{\alpha/(\alpha+1)}{r_2}$ and hence for $u\in B_M$,
\[
\|u(s)\|_{r_{12}}\leq\|u(s)\|_{r_1}^{\frac{1}{\alpha+1}}\|u(s)\|_{r_2}^{\frac{\alpha}{\alpha+1}}\leq Ms^{-\beta_1/(\alpha+1)-\alpha\beta_2/(\alpha+1)}=Ms^{-\frac{\beta_1+\alpha\beta_2}{\alpha+1}}
=Ms^{-\beta_{12}},
\]
\[
\|u(s)-v(s)\|_{r_{12}}\leq\|u(s)-v(s)\|_{r_1}^{\frac{1}{\alpha+1}}\|u(s)-v(s)\|_{r_2}^{\frac{\alpha}{\alpha+1}}\leq d(u,v)s^{-\frac{\beta_1+\alpha\beta_2}{\alpha+1}}=d(u,v)s^{-\beta_{12}}.
\]
Thus
\begin{eqnarray}\label{17}
t^{\beta_1}\|\mathcal{J}_\varphi(u)(t)-\mathcal{J}_\psi(v)(t)\|_{r_1}&\lesssim&t^{\beta_1}\|e^{-t\la}(\varphi-\psi)\|_{r_1}+M^\alpha d(u,v) t^{\beta_1-\frac{d}{2}(\frac{\alpha+1}{r_{12}}-\frac{1}{r_1})-\frac{b}{2}-(\alpha+1)\beta_{12}+1}\nonumber\\
&&\cdot\int_0^1 (1-s)^{-\frac{d}{2}(\frac{\alpha+1}{r_{12}}-\frac{1}{r_1})-\frac{b}{2}}s^{-(\alpha+1)\beta_{12}}ds\nonumber\\
&\lesssim&t^{\beta_1}\|e^{-t\la}(\varphi-\psi)\|_{r_1}+M^\alpha d(u,v)
\end{eqnarray}using Lemma \ref{11} (3), (4), (6).
Now  using Lemma \ref{11} (2), Proposition \ref{est0} for $(p,q)=(\frac{r_2}{\alpha+1},r_2)$ we have
\begin{eqnarray*}
&&\|\mathcal{J}_\varphi(u)(t)-\mathcal{J}_\psi(v)(t)\|_{r_2}\\
&\lesssim&\|e^{-t\la}(\varphi-\psi)\|_{r_2}+\int_0^t \|e^{-(t-s)\la}(|\cdot|^{-b}[|u(s)|^\alpha u(s)-|v(s)|^\alpha v(s)])\|_{r_2}ds\\
&\lesssim&\|e^{-t\la}(\varphi-\psi)\|_{r_2}+\int_0^t(t-s)^{-\frac{d}{2}(\frac{\alpha+1}{r_2}-\frac{1}{r_2})-\frac{b}{2}} \||u(s)|^\alpha u(s)-|v(s)|^\alpha v(s)\|_{\frac{r_2}{\alpha+1}}ds\\
&\lesssim&\|e^{-t\la}(\varphi-\psi)\|_{r_2}+\int_0^t(t-s)^{-\frac{d\alpha}{2r_2}-\frac{b}{2}}(\|u(s)\|_{r_2}^\alpha+\|v(s)\|_{r_2}^\alpha)\|u(s)- v(s)\|_{r_2} ds\\
&\lesssim&\|e^{-t\la}(\varphi-\psi)\|_{r_2}+M^\alpha d(u,v)\int_0^t(t-s)^{-\frac{d\alpha}{2r_2}-\frac{b}{2}}s^{-(\alpha+1)\beta_2} ds
\end{eqnarray*}
and hence using Lemma \ref{11} (3), (4), (5) we get
\begin{eqnarray}\label{18}
&&t^{\beta_2}\|\mathcal{J}_\varphi(u)(t)-\mathcal{J}_\psi(v)(t)\|_{r_2}\nonumber\\
&\lesssim&t^{\beta_2}\|e^{-t\la}(\varphi-\psi)\|_{r_2}+M^\alpha d(u,v)t^{\beta_2-\frac{d\alpha}{2r_2}-\frac{b}{2}-(\alpha+1)\beta_2+1}\int_0^1(1-s)^{-\frac{d\alpha}{2r_2}-\frac{b}{2}}s^{-(\alpha+1)\beta_2} ds\nonumber\\
&\lesssim&t^{\beta_2}\|e^{-t\la}(\varphi-\psi)\|_{r_2}+M^\alpha d(u,v).
\end{eqnarray}
Using \eqref{17}, \eqref{18} we have 
\begin{align}\label{19}
&d(\mathcal{J}_\varphi(u),\mathcal{J}_\psi(v))\nonumber\\
&\leq c\max\left(\sup_{t>0}t^{\beta_1}\|e^{-t\la}(\varphi-\psi)\|_{r_1},\sup_{t>0}t^{\beta_2}\|e^{-t\la}(\varphi-\psi)\|_{r_2}\right)+KM^\alpha d(u,v).
\end{align}
Putting $\psi=0,v=0$ in \eqref{19} one has
\[
d(\mathcal{J}_\varphi(u),0)\leq c\max\left(\sup_{t>0}t^{\beta_1}\|e^{-t\la}\varphi\|_{r_1},\sup_{t>0}t^{\beta_2}\|e^{-t\la}\varphi\|_{r_2}\right)+KM^\alpha d(u,0)\leq cR+KM^{\alpha+1}\leq M
\] and hence $\mathcal{J}_\varphi(u)\in B_M$. Thus $\mathcal{J}_\varphi$ maps from $B_M$ to itself. Putting $\psi=\varphi$ in \eqref{19}
\[
d(\mathcal{J}_\varphi(u),\mathcal{J}_\varphi(v))\leq KM^\alpha d(u,v)<d(u,v)
\]and hence $\mathcal{J}_\varphi$ is a contraction in $B_M$. Thus \eqref{0} has a unique solution in $B_M$ satisfying \eqref{20}.

\eqref{as1} follows from Lemma \ref{c1} and iteration as in proof of Theorem \ref{global2}.
\eqref{as2} follows from Lemma \ref{23} and iteration as in proof of Theorem \ref{global2}.

\end{proof}

This is a variant of Lemma \ref{23} used in the above result.

\begin{lemma}[A priori estimate]\label{c1}
Suppose $s < q  $ and
\[
\tilde{s}_1 <\frac{d}{q}  <b+ \frac{d(\alpha+1)}{s} < \tilde{s}_2+2,\qquad\frac{d}{2}\big(\frac{\alpha+1}{s}-\frac{1}{q}\big)<1-\frac{b}{2}
\]
Assume $t_0\geq1$ and $u$ be solution to \eqref{0} 
satisfying 
\[
\sup_{t>t_0}t^{\frac{2-b}{2\alpha_1}-\frac{d}{2s}}\|u(t)\|_s\leq A<\infty
\]
then
\[
\sup_{t\geq 2t_0}t^{\frac{2-b}{2\alpha_1}-\frac{d}{2q}}\|u(t)\|_q\lesssim A(1+A^\alpha)=:C_A<\infty
\]
with $C_A\to0$ as $A\to0$.
\end{lemma}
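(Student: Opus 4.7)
The plan is to mirror the proof of Lemma~\ref{23} while carefully tracking a mismatch in scaling exponents coming from using $\alpha_1$ in place of $\alpha$. First, I would split the Duhamel representation at time $t/2$:
\[
u(t) = e^{-\frac{t}{2}\la} u(t/2) + \mu\int_{t/2}^t e^{-(t-\sigma)\la}\bigl(|\cdot|^{-b}|u(\sigma)|^\alpha u(\sigma)\bigr)\,d\sigma.
\]
The restriction to $t\geq 2t_0$ is precisely what guarantees that $t/2\geq t_0$ and every $\sigma\in[t/2,t]$ lies in $(t_0,\infty)$, where the a priori bound $\|u(\sigma)\|_s \leq A\,\sigma^{\frac{d}{2s}-\frac{2-b}{2\alpha_1}}$ is available.

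Next, I would apply Theorem~\ref{ste} with the pair $(s,q)$ on the linear piece and Proposition~\ref{est0} with the pair $(\frac{s}{\alpha+1},q)$ on the nonlinear piece (both pairs are admissible by the hypotheses on $s,q$). Substituting the a priori bound, changing variables $\sigma=t\rho$, and invoking the hypothesis $\frac{d}{2}(\frac{\alpha+1}{s}-\frac{1}{q})+\frac{b}{2}<1$ so that the resulting beta-type integral on $[1/2,1]$ is finite (the factor $\rho^{-(\alpha+1)(\frac{2-b}{2\alpha_1}-\frac{d}{2s})}$ is bounded on $[1/2,1]$), I expect an estimate of the form
\[
\|u(t)\|_q \lesssim A\,t^{\frac{d}{2q}-\frac{2-b}{2\alpha_1}} + A^{\alpha+1}\, t^{\frac{d}{2q}-\frac{(2-b)(\alpha+1)}{2\alpha_1}+\frac{2-b}{2}}.
\]
Multiplying both sides by $t^{\frac{2-b}{2\alpha_1}-\frac{d}{2q}}$, the linear piece contributes $A$, while the nonlinear piece contributes $A^{\alpha+1}\,t^{\frac{2-b}{2}(1-\alpha/\alpha_1)}$.

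The main obstacle, and the key contrast with Lemma~\ref{23}, is that scaling no longer balances exactly: here we are left with the residual exponent $\frac{2-b}{2}\bigl(1-\frac{\alpha}{\alpha_1}\bigr)$, which must be nonpositive. The point is that $\alpha_1<\alpha$ (and $b<2$) makes it strictly negative, so $t$ being bounded \emph{below} (by $2t_0\geq 2$) lets us absorb this factor into a universal constant. Therefore
\[
\sup_{t\geq 2t_0} t^{\frac{2-b}{2\alpha_1}-\frac{d}{2q}}\|u(t)\|_q \lesssim A+A^{\alpha+1} = A(1+A^\alpha),
\]
and the right-hand side tends to $0$ as $A\to 0$, as required. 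All remaining bookkeeping (verifying admissibility of the pairs $(s,q)$ and $(\frac{s}{\alpha+1},q)$ for Theorem~\ref{ste} and Proposition~\ref{est0}, and integrability of the beta-type integrand near $\rho=1$) is routine and is exactly the content of the hypothesis.
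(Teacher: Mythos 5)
Your proposal is correct and follows essentially the same route as the paper's proof: split the Duhamel formula at $t/2$, apply Theorem \ref{ste} with $(s,q)$ and Proposition \ref{est0} with $(\tfrac{s}{\alpha+1},q)$, insert the a priori bound on $[t/2,t]\subset(t_0,\infty)$, and observe that the residual exponent $\tfrac{2-b}{2}\bigl(1-\tfrac{\alpha}{\alpha_1}\bigr)<0$ is harmless because $t\geq 2t_0\geq 2$. Your bookkeeping of the mismatch between $\alpha$ and $\alpha_1$ matches the paper's exponent $t^{-\frac{2-b}{2}(\frac{\alpha}{\alpha_1}-1)}$ exactly.
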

\begin{proof}[{\bf Proof}]
Note that
\[
u(t)=e^{-\frac{t}{2}\la}u(t/2)+\mu\int_{t/2}^te^{-(t-\sigma )\la}(|\cdot|^{-b} (|u(\sigma)|^\alpha u(\sigma))d\sigma
\] 
and therefore using Theorem \ref{ste} for $(p,q)=(s,q)$ and Proposition \ref{est0} for $(p,q)=(\frac{s}{\alpha+1},q)$ we have 
\begin{eqnarray*}
\|u(t)\|_q&\lesssim&\|e^{-\frac{t}{2}\la}u(t/2)\|_q+\int_{t/2}^t\|e^{-(t-\tau)\la}(|\cdot|^{-b} (|u(\tau)|^\alpha u(\tau))\|_q d\tau\\
&\lesssim&t^{-\frac{d}{2}(\frac{1}{s}-\frac{1}{q})}\|u(t/2)\|_s+\int_{t/2}^t(t-\tau)^{-\frac{d}{2}(\frac{\alpha+1}{s}-\frac{1}{q})-\frac{b}{2}}\||u(\tau)|^\alpha u(\tau)\|_{\frac{s}{\alpha+1}}d\tau\\
&\lesssim&t^{\frac{d}{2q}-\frac{2-b}{2\alpha_1}}A+\int_{t/2}^t(t-\tau)^{-\frac{d}{2}(\frac{\alpha+1}{s}-\frac{1}{q})-\frac{b}{2}}\| u(\tau)\|_s^{\alpha+1}d\tau.
\end{eqnarray*}
Now with $\beta_1(s)=\frac{2-b}{2\alpha_1}-\frac{d}{2s}$, and using $t/2\geq t_0$
\begin{eqnarray*}
\|u(t)\|_q
&\lesssim&t^{\frac{d}{2q}-\frac{2-b}{2\alpha_1}}A+A^{\alpha+1}\int_{t/2}^t(t-\tau)^{-\frac{d}{2}(\frac{\alpha+1}{s}-\frac{1}{q})-\frac{b}{2}}\tau^{-\beta_1(s)(\alpha+1)}d\tau\\
&\lesssim&t^{\frac{d}{2q}-\frac{2-b}{2\alpha_1}}A+A^{\alpha+1}t^{1-\frac{d}{2}(\frac{\alpha+1}{s}-\frac{1}{q})-\frac{b}{2}-\beta_1(s)(\alpha+1)}\int_{1/2}^1(1-\sigma)^{-\frac{d}{2}(\frac{\alpha+1}{s}-\frac{1}{q})-\frac{b}{2}}\sigma^{-\beta_1(s)(\alpha+1)}d\sigma\\
&\lesssim&t^{\frac{d}{2q}-\frac{2-b}{2\alpha_1}}A+A^{\alpha+1}t^{\frac{d}{2q}-\frac{2-b}{2\alpha_1}-\frac{2-b}{2}(\frac{\alpha}{\alpha_1}-1)}
\end{eqnarray*} as $\int_{1/2}^1(1-\sigma)^{-\frac{d}{2}(\frac{\alpha+1}{s}-\frac{1}{q})-\frac{b}{2}}\sigma^{-\beta(s)(\alpha+1)}d\sigma<\infty$.
This completes the proof as  $t\geq1$ for $t/2\geq t_0$.
\end{proof}
Following is again a technical result, to be used to prove Theorem \ref{b2} that further proves the final result.
\begin{lemma}\label{13}
Let $0 < b < \min(2, d)$ and $\frac{2-b}{\tilde{s}_2+2}<\alpha<\frac{2-b}{\tilde{s}_1}$. Let the real numbers $\alpha_1$ and $\alpha$ be such that\[\max\left(\frac{2-b}{\tilde{s}_2+2},\frac{\tilde{s}_1\alpha}{\tilde{s}_2+2-b-\tilde{s}_1\alpha}\right)<\alpha_1 < \alpha<\frac{2-b}{\tilde{s}_1}.
\]
Let $r_1,r_2,\beta_1,\beta_2$ are real numbers as in Lemma \ref{11}. Then there exists a real number $\delta_0 > 0$ such that, for all $0 < \delta < \delta_0$, there exists a real number $
         0<\theta_\delta <1$,
with the properties that, the two real numbers $\tilde{r}$ and $\tilde{\beta}$ given by
\begin{equation}\label{21}
\frac{1}{\tilde{r}}=\frac{\theta_\delta}{r_1}+\frac{1-\theta_\delta}{r_2},\qquad\tilde{\beta}=\theta_\delta\beta_1+(1-\theta_\delta)\beta_2
\end{equation}
satisfy the following conditions\begin{itemize}
\item[(i)]$\tilde{s}_1<\frac{d}{r_1} < b + \frac{d(\alpha+1)}{\tilde{r}} < \tilde{s}_2+2$
\item[(ii)]$\beta_1 +\delta-\frac{d}{2}(\frac{\alpha+1}{\tilde{r}}-\frac{1}{r_1})-\frac{b}{2}-\tilde{\beta}(\alpha+1)+1=0$
\item[(iii)]$\frac{d}{2}(\frac{\alpha+1}{\tilde{r}}-\frac{1}{r_1})$ $+\frac{b}{2}<1,\tilde{\beta}(\alpha+1)<1$.
\end{itemize}
Moreover this $\theta_\delta$ is given by
\begin{equation}\label{22}
\theta_\delta=\frac{1}{\alpha+1}+\frac{2\alpha_1\alpha}{(2-b)(\alpha-\alpha_1)(\alpha+1)}\delta.
\end{equation}
\end{lemma}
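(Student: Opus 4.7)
The plan is to treat condition (ii) as an equation in $\theta$ (for fixed $\delta$), solve it explicitly to obtain formula \eqref{22}, and verify (i), (iii), and $0<\theta_\delta<1$ by a continuity argument around the base point $(\theta,\delta)=(\tfrac{1}{\alpha+1},0)$.

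The first key observation is that at $\theta_0:=\tfrac{1}{\alpha+1}$, the quantities $\tilde r$ and $\tilde\beta$ collapse to $r_{12}$ and $\beta_{12}$ respectively. Using $r_2=\alpha r_1/\alpha_1$ from Lemma \ref{11}, a short computation yields $\tilde r^{-1}|_{\theta_0}=(\alpha_1+1)/((\alpha+1)r_1)=r_{12}^{-1}$; the companion identity $\tilde\beta|_{\theta_0}=\beta_{12}$ follows from the same relation together with the definitions of $\beta_1,\beta_2,\beta_{12}$ in Lemma \ref{11}. Once this reduction is in hand, the base point realizes exactly the hypotheses of Lemma \ref{11}: conditions (i) and (iii) at $(\theta_0,0)$ become Lemma \ref{11} \eqref{11ii}, \eqref{11iii}, \eqref{11iv}, all of which are \emph{strict}, and condition (ii) at $(\theta_0,0)$ is precisely Lemma \ref{11} (6).

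Next I would exploit the crucial structural fact that the left-hand side $g(\theta)$ of condition (ii), viewed as a function of $\theta$ alone, is \emph{affine}. A direct derivative computation, using $\tfrac{d}{d\theta}\tilde r^{-1}=\tfrac{\alpha-\alpha_1}{\alpha r_1}$ and the easily verified identity $\beta_1-\beta_2=\tfrac{\alpha-\alpha_1}{\alpha}\beta_1$, simplifies via $\beta_1+\tfrac{d}{2r_1}=\tfrac{2-b}{2\alpha_1}$ to
$$g'(\theta)=-\frac{(\alpha+1)(\alpha-\alpha_1)(2-b)}{2\alpha\alpha_1},$$
which is a nonzero negative constant by the hypothesis $\alpha_1<\alpha$. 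Since $g(\theta_0)=0$, the unique solution of $g(\theta_\delta)=-\delta$ is linear in $\delta$ and, upon inverting, yields exactly formula \eqref{22}.

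To finish, I would invoke continuity. Since $\theta_\delta$ depends linearly on $\delta$ with $\theta_0\in(0,1)$, and the three strict inequalities that make up conditions (i) and (iii) hold at $\theta_0$ by the above reduction, a single threshold $\delta_0>0$ preserves $\theta_\delta\in(0,1)$ together with (i) and (iii) throughout $0<\delta<\delta_0$. I do not anticipate any serious obstacle: the only substantive step is the bookkeeping identification $\tilde r|_{\theta_0}=r_{12}$, $\tilde\beta|_{\theta_0}=\beta_{12}$ that pins the construction to Lemma \ref{11}; everything after that is affine algebra in $\theta$ plus a one-variable continuity argument.
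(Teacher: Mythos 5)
Your proposal is correct and follows essentially the same route as the paper: treat (ii) as a linear (affine) equation in $\theta_\delta$ whose solution is exactly \eqref{22} (your derivative computation $g'(\theta)=-\frac{(\alpha+1)(\alpha-\alpha_1)(2-b)}{2\alpha\alpha_1}$ together with $\beta_1+\frac{d}{2r_1}=\frac{2-b}{2\alpha_1}$ is just a repackaging of the paper's direct algebraic reduction), and then verify (i), (iii) and $0<\theta_\delta<1$ by continuity around $\theta_0=\frac{1}{\alpha+1}$, where $\tilde r=r_{12}$, $\tilde\beta=\beta_{12}$ and the strict inequalities of Lemma \ref{11} apply. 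No gaps.
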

\begin{proof}[{\bf Proof}]
{\bf Step I:}   If \eqref{21} is true then (ii) is equlalent to \eqref{22}:\\
First note that 
$\frac{\alpha+1}{\tilde{r}}-\frac{1}{r_1}=(\alpha+1)(\frac{1}{r_1}-\frac{1}{r_2})\theta_\delta+\frac{\alpha+1}{r_2}-\frac{1}{r_1}$, $\tilde{\beta}=\theta_\delta(\beta_1-\beta_2)+\beta_2$. Thus
\begin{align*}
&\frac{d}{2}(\frac{\alpha+1}{\tilde{r}}-\frac{1}{r_1})+\tilde{\beta}(\alpha+1)\\
=&\frac{d(\alpha+1)}{2}(\frac{1}{r_1}-\frac{1}{r_2})\theta_\delta+\frac{d(\alpha+1)}{2r_2}-\frac{d}{2r_1}+\theta_\delta(\beta_1-\beta_2)(\alpha+1)+\beta_2(\alpha+1).
\end{align*}
Then
\begin{align*}
&\beta_1 -\frac{d}{2}(\frac{\alpha+1}{\tilde{r}}-\frac{1}{r_1})-\tilde{\beta}(\alpha+1)\\
&=\beta_1-\frac{d(\alpha+1)}{2}(\frac{1}{r_1}-\frac{1}{r_2})\theta_\delta-\frac{d(\alpha+1)}{2r_2}+\frac{d}{2r_1}-\theta_\delta(\beta_1-\beta_2)(\alpha+1)-\beta_2(\alpha+1)\\
&=(\beta_1+\frac{d}{2r_1})-(\alpha+1)(\beta_2+\frac{d}{2r_2})-\theta_\delta(\alpha+1)(\frac{d}{2}(\frac{1}{r_1}-\frac{1}{r_2})+\beta_1-\beta_2)\\
&=\frac{2-b}{2\alpha_1}-(\alpha+1)\frac{2-b}{2\alpha}-\theta_\delta(\alpha+1)(\frac{2-b}{2\alpha_1}-\frac{2-b}{2\alpha})
\end{align*} 
Therefore (ii) is equivalent to
\begin{align*}
&\frac{2-b}{2\alpha_1}-(\alpha+1)\frac{2-b}{2\alpha}+\delta-\frac{b}{2}+1=\theta_\delta(\alpha+1)(\frac{2-b}{2\alpha_1}-\frac{2-b}{2\alpha})\\
&\Longleftrightarrow\frac{1}{\alpha_1}-\frac{\alpha+1}{\alpha}+\frac{2}{2-b}\delta+1=\theta_\delta(\alpha+1)(\frac{1}{\alpha_1}-\frac{1}{\alpha})\\
&\Longleftrightarrow\frac{1}{\alpha_1}-\frac{1}{\alpha}+\frac{2}{2-b}\delta=\theta_\delta(\alpha+1)(\frac{1}{\alpha_1}-\frac{1}{\alpha})
\end{align*}which is equivalent to \eqref{22}. 

{\bf Step II:} Validity of (i), (iii):

Note that \[\theta_\delta=\frac{1}{\alpha+1}+\epsilon(\delta)
\]
Now $\theta_0=\frac{1}{\alpha+1}+\epsilon(0)=\frac{1}{\alpha+1}$ and for this choice of $\theta_\delta$, one has $\tilde{r}=r_{12}$, $\tilde{\beta}=\beta_{12}$. So at $\theta=0$, the inequalities (i), (iii) hold by Lemma \ref{11}. Then by continuity of $\epsilon$ with respect to $\delta$ one has (i), (iii) for $\delta>0$ small enough. 
\end{proof}

\begin{theorem}\label{b2}
Let $0 < b < \min(2, d )$ and $\frac{2-b}{\tilde{s}_2+2}<\alpha<\frac{2-b}{\tilde{s}_1}$. Let the real numbers $\alpha_1$ and $\alpha$ be such that\[\max\left(\frac{2-b}{\tilde{s}_2+2},\frac{\tilde{s}_1\alpha}{\tilde{s}_2+2-b-\tilde{s}_1\alpha}\right)<\alpha_1 < \alpha<\frac{2-b}{\tilde{s}_1}.
\]
Let $r_1, r_2$ be two real numbers as in Lemma \ref{11}.   Let $\beta_1, \beta_2$ be given by Lemma \ref{11} and define $\beta_1(q)$ by
\[\beta_1(q)=\frac{2-b}{2\alpha_1}-\frac{d}{2q}, q>1
\]
Let $\psi(x) = \omega(x)|x|^{-\frac{2-b}{\alpha_1}}$ , where $\omega \in L^\infty(S^{d-1})$ is homogeneous of degree $0$. Let $\varphi\in C_0(\rd)$ be such that\[
|\varphi(x)|\leq c( 1+|x|^2)^{-\frac{2-b}{2\alpha_1}}\text{ for all }x\in\rd,\qquad |\varphi(x)|= \omega(x)|x|^{-\frac{2-b}{\alpha_1}}\text{ for all }|x|\geq A
\]for some constant $A > 0$, where c is a small positive constant and $\|\omega\|$ is sufficiently small.

Let u be the solution of \eqref{0} with initial data $\varphi$, constructed by Theorem \ref{asym 1} and let $w$ be the self-similar solution of \eqref{0} constructed by Theorem \ref{asym 1} with $\mu = 0$, and with initial data $\psi$. Then there exists $\delta_0 > 0$ such that for all $0<\delta<\delta_0$, and with $M$ perhaps smaller, there exists $C_\delta > 0$ such that
  \begin{equation}\label{28}
  \|u(t)- w(t)\|_q \leq C_\delta t^{-\beta_1(q)-\delta}, \forall t \geq t_q,
  \end{equation}
for all $q \in [r_1, \frac{d}{\tilde{s}_1})$. In particular, if $\omega\neq  0$, 
for large time $t$ 
\[
c_1t^{-\beta_1(q)} \leq \|u(t)\|_q \leq c_2t^{-\beta_1(q)}.
\]
 
\end{theorem}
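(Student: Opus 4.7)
The plan is to estimate $v := u - w$ directly via its Duhamel representation
\[
v(t) = e^{-t\la}(\varphi - \psi) + \mu \int_0^t e^{-(t-s)\la}\bigl(|\cdot|^{-b}|u(s)|^\alpha u(s)\bigr)\, ds,
\]
which is valid because $w = e^{-t\la}\psi$ solves the linear equation. The hypotheses on $\varphi$ force $\varphi - \psi$ to vanish for $|x|\geq A$ and satisfy $|\varphi-\psi| \leq (c+\|\omega\|_\infty)|x|^{-(2-b)/\alpha_1}\chi_{\{|x|\leq A\}}$, so $\varphi - \psi \in L^s(\rd)$ for every $1 \leq s < d\alpha_1/(2-b)$. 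Choosing $s$ so that $\tfrac{d}{2}(\tfrac{1}{s}-\tfrac{1}{r_1}) = \beta_1 + \delta$ and invoking Theorem~A give
\[
\|e^{-t\la}(\varphi-\psi)\|_{r_1} \lesssim t^{-\beta_1 - \delta},
\]
which is legitimate for any $\delta > 0$ small enough that $s < d\alpha_1/(2-b)$ still holds, i.e.\ for $\delta < \delta_0$ as supplied by Lemma~\ref{13}.

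For the nonlinear piece in $L^{r_1}$, I interpolate the two bounds in \eqref{20} according to \eqref{21} to obtain $\|u(s)\|_{\tilde r} \leq M\, s^{-\tilde\beta}$, so $\||u|^{\alpha}u\|_{\tilde r/(\alpha+1)} \leq M^{\alpha+1} s^{-\tilde\beta(\alpha+1)}$. Lemma~\ref{13}\,(i) lets me apply Proposition~\ref{est0} with the pair $(\tilde r/(\alpha+1),\, r_1)$, and Lemma~\ref{13}\,(ii)-(iii) show that the resulting Beta-type integral
\[
\int_0^t (t-s)^{-\frac{d}{2}(\frac{\alpha+1}{\tilde r}-\frac{1}{r_1})-\frac{b}{2}} s^{-\tilde\beta(\alpha+1)}\, ds
\]
is finite and equals a constant multiple of $t^{-\beta_1-\delta}$. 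Summing the two contributions establishes \eqref{28} for $q = r_1$.

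To promote the estimate to $q \in (r_1, d/\tilde s_1)$, I apply the mid-time splitting already used in Theorem~\ref{asym}\,\eqref{asym1}: write $v(t) = e^{-\frac{t}{2}\la} v(t/2) + \mu\int_{t/2}^t e^{-(t-s)\la}(|\cdot|^{-b}|u|^\alpha u)\, ds$, estimate the first term in $L^q$ by Theorem~A starting from the $r_1$-bound on $v(t/2)$ to get a contribution $t^{-\frac{d}{2}(1/r_1 - 1/q)}\cdot t^{-\beta_1-\delta} = t^{-\beta_1(q)-\delta}$, and control the tail via Proposition~\ref{est0} with the pair $(\tilde r/(\alpha+1), q)$ (the hypotheses follow from Lemma~\ref{13}\,(i) together with $q \geq r_1$) combined with the a priori bound $\|u(s)\|_q \leq C_M s^{-\beta_1(q)}$ from Theorem~\ref{asym 1}\,\eqref{as1}; smallness of $C_M$ absorbs any self-referential term. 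If the admissible window in $q$ for a single pair closes before $d/\tilde s_1$, I iterate through finitely many intermediate exponents exactly as in the proof of Theorem~\ref{global2}\,\eqref{g24}. The two-sided bound on $\|u(t)\|_q$ follows from $\|w(t)\|_q = t^{-\beta_1(q)}\|w(1)\|_q$ (a consequence of the homogeneity of $\psi$ and the scaling identity for $e^{-t\la}$ established inside the proof of Proposition~\ref{est0}) combined with \eqref{28} and the triangle inequality, exactly as at the end of Theorem~\ref{asym}\,\eqref{asym1}.

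The principal obstacle is reconciling the exponent identity in Lemma~\ref{13}\,(ii) with the strict integrability condition (iii): together they pin $\theta_\delta$ into a narrow band around $1/(\alpha+1)$, the value at which $\tilde r = r_{12}$ and $\tilde\beta = \beta_{12}$ reduce to the reference configuration of Lemma~\ref{11}. The explicit formula \eqref{22} together with a continuity argument in $\delta$ around that reference configuration supplies the required $\delta_0 > 0$; for $\delta$ outside this band either the time integral diverges or the exponent pair in Proposition~\ref{est0} becomes inadmissible.
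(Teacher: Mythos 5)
Your treatment of the endpoint $q=r_1$ and of the final two-sided bound coincides with the paper's argument: the same Duhamel splitting (the paper writes $u-v$ with $v=e^{-t\la}\varphi$ and adds $e^{-t\la}(\varphi-\psi)$, which is your decomposition), the same $L^s$ bound on $\varphi-\psi$ fed into Theorem \ref{ste}, the same interpolation of \eqref{20} through \eqref{21}, and Proposition \ref{est0} with the pair $(\tilde r/(\alpha+1),r_1)$ whose exponent bookkeeping is exactly Lemma \ref{13}(ii)--(iii); the self-similarity identity $\|w(t)\|_q=t^{-\beta_1(q)}\|w(1)\|_q$ at the end is also the paper's. (One small slip: the binding restriction on $\delta$ in the linear estimate is $d/s<\tilde{s}_2+2$, needed for Theorem \ref{ste}, not $s<d\alpha_1/(2-b)$, which holds automatically once $\delta>0$.)

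The genuine gap is in the promotion step to $q\in(r_1,\frac{d}{\tilde{s}_1})$. First, with your pair $(\tilde r/(\alpha+1),q)$ the time singularity exponent is $\frac d2\bigl(\frac{\alpha+1}{\tilde r}-\frac1q\bigr)+\frac b2$; Lemma \ref{13}(iii) controls it only at $q=r_1$. For small $\delta$ one has $\frac{d(\alpha+1)}{\tilde r}\approx\frac{d(\alpha_1+1)}{r_1}$, which the construction of $r_1$ only bounds by $\tilde{s}_2+2-b$, so as $\frac dq\downarrow\tilde{s}_1$ the exponent can approach $\frac{\tilde{s}_2+2-\tilde{s}_1}{2}$, which is already $\geq\frac d2\geq1$ when $a\geq0$; the one-step tail estimate therefore fails for $q$ near $\frac{d}{\tilde{s}_1}$. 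Second, the remark that ``smallness of $C_M$ absorbs any self-referential term'' is structurally off: since $w=e^{-t\la}\psi$ solves the linear equation ($\mu=0$), the Duhamel integral for $u-w$ contains only $|u|^\alpha u$, so there is no self-referential term and no absorption is needed; correspondingly, invoking $\|u(s)\|_q\leq C_M s^{-\beta_1(q)}$ is inconsistent with the pair $(\tilde r/(\alpha+1),q)$, which consumes $\|u(s)\|_{\tilde r}$, not $\|u(s)\|_q$. The paper's actual route for $q\in[r_2,\frac{d}{\tilde{s}_1})$ is to use Proposition \ref{est0} with the pair $(q/(\alpha+1),q)$ (then $\frac{d\alpha}{2q}+\frac b2<1$ holds automatically because $q\geq r_2>\frac{d\alpha}{2-b}$) and to interpolate the two a priori decays \eqref{as1} and \eqref{as2} with weights $\theta_\delta$, $1-\theta_\delta$, which makes the resulting power of $t$ exactly zero after multiplying by $t^{\beta_1(q)+\delta}$; the remaining range $q\in(r_1,r_2)$ is then obtained by interpolating the $r_1$- and $r_2$-estimates. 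Your fallback of iterating as in \eqref{g24} could probably be made to work (the exponent count gives $\delta-\frac{(2-b)(\alpha-\alpha_1)}{2\alpha_1}\leq0$ for $\delta$ small), but as written it is neither specified nor carried out, and the pair/absorption confusion is what must be repaired.
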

\begin{proof}[{\bf Proof}]
Let $\psi(x)=\omega(x)|x|^{-\frac{2-b}{2\alpha_1}}$, $x\in\rd$.
Let $\varphi_1=\varphi\chi_{\{|x|\leq1\}}$ and $\varphi_2=\varphi\chi_{\{|x|>1\}}$ so that $\varphi=\varphi_1+\varphi_2$.
Note that 
\[
|\varphi(x)|\leq c( 1+|x|^2)^{-\frac{2-b}{2\alpha_1}}\leq c|x|^{-\frac{2-b}{\alpha_1}}, \forall\ x\in\rd
\]
and thus $\varphi_1\in L^s(\rd)$ for $1\leq s<\frac{d\alpha_1}{2-b}$ and $\varphi_2\in L^\sigma(\rd)$ for $\frac{d\alpha_1}{2-b}<\sigma\leq\infty$.
Since $\alpha>\alpha_1$,
\[
|\varphi(x)|\leq c( 1+|x|^2)^{-\frac{2-b}{2\alpha_1}}\leq c( 1+|x|^2)^{-\frac{2-b}{2\alpha}}\leq c|x|^{-\frac{2-b}{\alpha}}, \forall\ x\in\rd.
\]
Then applying Theorem A
\begin{align*}
\|e^{-\la}\varphi\|_{r_1}&\leq \|e^{-\la}\varphi_1\|_{r_1}+\|e^{-\la}\varphi_2\|_{r_1}
\lesssim \|\varphi_1\|_s+\|\varphi_2\|_\sigma<\infty
\end{align*}by choosing $s,\sigma$ so that $\tilde{s}_1<\frac{d}{r_1}<\frac{d}{\sigma}<\frac{2-b}{\alpha_1}<\frac{d}{s}<\tilde{s}_2+2$. Similarly using $r_2>\frac{d\alpha}{2-b}$ one finds $\|e^{-\la}\varphi\|_{r_2}<\infty$. Then using the homogeneity of $|\cdot|^{-\frac{2-b}{\alpha_1}}$, $|\cdot|^{-\frac{2-b}{\alpha}}$ (and positivity of $e^{-t\la}$) we achieve
\[
\sup_{t>0}t^{\beta_1}\|e^{-t\la}\varphi\|_{r_1}\leq R,\qquad \sup_{t>0}t^{\beta_2}\|e^{-t\la}\varphi\|_{r_2}\leq R
\]after possibly choosing $c$ smaller. 

Proceeding as \eqref{12}, one finds \begin{equation}\label{14}
\sup_{t>0} t^{\beta_1+\delta}\|e^{-t\la}(\varphi-\psi)\|_{r_1}<\infty\quad\text{for all }0<\delta\leq\frac{\tilde{s}_2+2}{2}-\frac{2-b}{2\alpha_1}.
\end{equation}
for $\delta>0$ small enough.

Let $v(t)=e^{-t\la}\varphi$ then $u(t)=v(t)+\mu\int_0^te^{-(t-\sigma )\la}(|\cdot|^{-b} (|u(\sigma)|^\alpha u(\sigma))d\sigma$, therefore using Proposition \ref{est0}, Lemma \ref{13}
\begin{eqnarray*}
\|u(t)-v(t)\|_{r_1}&\lesssim&\int_0^t\|e^{-(t-\sigma )\la}(|\cdot|^{-b} (|u(\sigma)|^\alpha u(\sigma))\|_{r_1}d\sigma\\
&\lesssim&\int_0^t(t-\sigma )^{-\frac{d}{2}(\frac{\alpha+1}{\tilde{r}}-\frac{1}{r_1})-\frac{b}{2}}\||u(\sigma)|^\alpha u(\sigma))\|_{\frac{\tilde{r}}{\alpha+1}}d\sigma\\
&=&\int_0^t(t-\sigma )^{-\frac{d}{2}(\frac{\alpha+1}{\tilde{r}}-\frac{1}{r_1})-\frac{b}{2}}\| u(\sigma))\|_{\tilde{r}}^{\alpha+1}d\sigma.
\end{eqnarray*}
Now note that
\[
\| u(\sigma))\|_{\tilde{r}}\leq\| u(\sigma))\|_{r_1}^{\theta_\delta}\| u(\sigma))\|_{r_2}^{1-\theta_\delta}
\leq M \sigma^{-\beta_1\theta_\delta-\beta_2(1-\theta_\delta)}=M \sigma^{-\tilde{\beta}}\]
and hence
\begin{eqnarray*}
t^{\beta_1+\delta}\|u(t)-v(t)\|_{r_1}&\lesssim&M^{\alpha+1}t^{\beta_1+\delta}\int_0^t(t-\sigma )^{-\frac{d}{2}(\frac{\alpha+1}{\tilde{r}}-\frac{1}{r_1})-\frac{b}{2}}\sigma^{-(\alpha+1)\tilde{\beta}}d\sigma\\
&\lesssim&M^{\alpha+1}t^{\beta_1+\delta+1-\frac{d}{2}(\frac{\alpha+1}{\tilde{r}}-\frac{1}{r_1})-\frac{b}{2}-(\alpha+1)\tilde{\beta}}\int_0^1(1-\sigma )^{-\frac{d}{2}(\frac{\alpha+1}{\tilde{r}}-\frac{1}{r_1})-\frac{b}{2}}\sigma^{-(\alpha+1)\tilde{\beta}}d\sigma\\
&=&M^{\alpha+1}\int_0^1(1-\sigma )^{-\frac{d}{2}(\frac{\alpha+1}{\tilde{r}}-\frac{1}{r_1})-\frac{b}{2}}\sigma^{-(\alpha+1)\tilde{\beta}}d\sigma<\infty
\end{eqnarray*}using Lemma \ref{13}. This together with \eqref{14} imply
\begin{equation}\label{15}
\|u(t)-w(t)\|_{r_1}\leq\|u(t)-v(t)\|_{r_1}+\|e^{-t\la}\varphi-e^{-t\la}\psi\|_{r_1}\lesssim t^{-\beta_1-\delta}
\end{equation}
which proves \eqref{28} for $q=r_1$.

Now 
\[
u(t)-w(t)=e^{-\frac{t}{2}\la}(u(t/2)-w(t/2))+\mu\int_{t/2}^te^{-(t-\sigma )\la}(|\cdot|^{-b} (|u(\sigma)|^\alpha u(\sigma))d\sigma
\]
using Lemma \ref{13} for $q\in[r_2,\frac{d}{\tilde{s}_1})$,
\[
\tilde{s}_1<\frac{d}{q}<\frac{d}{r_1} <\tilde{s}_2+2\qquad\text{and}\qquad \tilde{s}_1<\frac{d}{q}<b+\frac{d(\alpha+1)}{q}<b+\frac{d(\alpha+1)}{r_2}
<\tilde{s}_2+2.
\]
\begin{eqnarray*}
&&\|u(t)-w(t)\|_q\\
&\lesssim& t^{-\frac{d}{2}(\frac{1}{r_1}-\frac{1}{q})}\|u(t/2) - w(t/2)\|_{r_1}+\int_{t/2}^t(t-\sigma)^{-\frac{d}{2}(\frac{\alpha+1}{q}-\frac{1}{q})-\frac{b}{2}}\||u(\sigma)|^\alpha u(\sigma) \|_{\frac{q}{\alpha+1}}d\sigma\\
&\lesssim& t^{-\frac{d}{2}(\frac{1}{r_1}-\frac{1}{q})-\beta_1-\delta}+\int_{t/2}^t(t-\sigma)^{-\frac{d\alpha}{2q}-\frac{b}{2}}\|u(\sigma) \|_q^{\alpha+1} d\sigma.
\end{eqnarray*}using \eqref{15}.
Now  from \eqref{as1}, \eqref{as2} in Theorem \ref{asym 1}, for $\sigma\geq t_q$, $\|u(\sigma) \|_q\leq C_M\sigma^{-\beta_1(q)}$ as well as $\|u(\sigma) \|_q\leq C_M\sigma^{-\beta(q)}$ for all $q\in[r_2,\frac{d}{\tilde{s}_1})$. Therefore for $\sigma\geq t_q$, 
\[
\|u(\sigma) \|_q^{\alpha+1}\leq C_M^{\alpha+1}\sigma^{-(\alpha+1)[\beta_1(q)\theta_\delta+\beta(q)(1-\theta_\delta)]}
\]
 and $\beta_1(q)\theta_\delta+\beta(q)(1-\theta_\delta)=\beta_1\theta_\delta+\beta_2(1-\theta_\delta)+\frac{d}{2}(\frac{1}{r_1}-\frac{1}{q})\theta_\delta+\frac{d}{2}(\frac{1}{r_2}-\frac{1}{q})(1-\theta_\delta)=\tilde{\beta}+\frac{d}{2}(\frac{\theta_\delta}{r_1}+\frac{1-\theta_\delta}{r_1})-\frac{d}{2q}=\tilde{\beta}+\frac{d}{2\tilde{r}}-\frac{d}{2q}$.
  Therefore for $t\geq 2t_q$, 
\begin{eqnarray*}
&&t^{\beta_1(q)+\delta}\|u(t)-w(t)\|_q\\
&\lesssim& t^{\beta_1(q)-\beta_1-\frac{d}{2}(\frac{1}{r_1}-\frac{1}{q})}+t^{\beta_1(q)+\delta}\int_{t/2}^t(t-\sigma)^{-\frac{d\alpha}{2q}-\frac{b}{2}}\|u(\sigma) \|_q^{\alpha+1} d\sigma\\
&\lesssim& t^0+C_M^{\alpha+1}t^{\beta_1(q)+\delta}\int_{t/2}^t(t-\sigma)^{-\frac{d\alpha}{2q}-\frac{b}{2}}\sigma^{-(\alpha+1)(\tilde{\beta}+\frac{d}{2\tilde{r}}-\frac{d}{2q})} d\sigma\\
&\lesssim& 1+C_M^{\alpha+1}t^{\beta_1(q)+\delta-\frac{d\alpha}{2q}-\frac{b}{2}-(\alpha+1)(\tilde{\beta}+\frac{d}{2\tilde{r}}-\frac{d}{2q})+1}\int_{1/2}^1(1-\sigma)^{-\frac{d\alpha}{2q}-\frac{b}{2}}\sigma^{-(\alpha+1)(\tilde{\beta}+\frac{d}{2\tilde{r}}-\frac{d}{2q})} d\sigma.
\end{eqnarray*}
But $\beta_1(q)+\delta-\frac{d\alpha}{2q}-\frac{b}{2}-(\alpha+1)(\tilde{\beta}+\frac{d}{2\tilde{r}}-\frac{d}{2q})+1=\beta_1(q)+\delta-\frac{d}{2}(\frac{\alpha+1}{q}-\frac{1}{q})-\frac{b}{2}-(\alpha+1)(\tilde{\beta}+\frac{d}{2\tilde{r}}-\frac{d}{2q})+1=\beta_1+\frac{d}{2}(\frac{1}{r_1}-\frac{1}{q})+\delta-\frac{d}{2}(\frac{\alpha+1}{\tilde{r}}-\frac{1}{r_1})-\frac{d}{2}(\frac{1}{r_1}-\frac{1}{q})-\frac{b}{2}-(\alpha+1)(\tilde{\beta}+\frac{d}{2q}-\frac{d}{2q})+1=\beta_1+\delta-\frac{d}{2}(\frac{\alpha+1}{\tilde{r}}-\frac{1}{r_1})-\frac{b}{2}-(\alpha+1)\tilde{\beta}+1=0$ and $\frac{d\alpha}{2q}+\frac{b}{2}<1$. Thus $t^{\beta_1(q)+\delta}\|u(t)-w(t)\|_q\lesssim1$ for $t\geq 2t_q$.
This proves \eqref{28} for $q\in[r_2,\frac{d}{\tilde{s}_1})$. To prove  \eqref{28} for $(r_1,r_2)$, we use interpolation. The final conclusion follows as in the proof in nonlinear case, see \eqref{p8}, \eqref{p9} in Subsection \ref{AB1}.
\end{proof}
\begin{remark}\label{r2}
After proving \eqref{28} for $q=r_1$ (i.e. \eqref{15}) the authors in \cite{slimene2017well} proves it for $q=\infty$ and interpolates them to achieve \eqref{28} for $q\in(r_1,\infty)$. Since we do not have the decay estimate \eqref{d} for $q=\infty$, we could not achieve \eqref{28} for $q=\infty$ and hence we need to take a different path to achieve the result.
\end{remark}

%
\begin{proof}[{\bf Proof of Theorem \ref{asym} \eqref{asym2}}]
Since $\sigma>\frac{2-b}{\alpha}$, we can find $\alpha_1<\alpha$ so that $\sigma=\frac{2-b}{\alpha_1}$. To apply Theorem \ref{b2} we need
\[\max\left(\frac{2-b}{\tilde{s}_2+2},\frac{\tilde{s}_1\alpha}{\tilde{s}_2+2-b-\tilde{s}_1\alpha}\right)<\alpha_1 < \alpha<\frac{2-b}{\tilde{s}_1}.
\]which will follow if 
$\frac{2-b}{\tilde{s}_2+2}<\frac{2-b}{\sigma}$ i.e. $\sigma< \tilde{s}_2+2$ and $\frac{\tilde{s}_1\alpha}{\tilde{s}_2+2-b-\tilde{s}_1\alpha}<\frac{2-b}{\sigma}$ i.e. $\sigma<\frac{2-b}{\tilde{s}_1\alpha}(\tilde{s}_2+2-b-\tilde{s}_1\alpha)$.
\end{proof}
\section*{Appendix}
\renewcommand*{\theAL}{A\arabic{AL}}
\begin{AL}\label{a1}
Let $\max\left(\frac{2-b}{\tilde{s}_2+2},\frac{\tilde{s}_1\alpha}{\tilde{s}_2+2-b-\tilde{s}_1\alpha}\right)<\alpha_1<\alpha<\frac{2-b}{\tilde{s}_1}$. Then one can choose $r_1$ so that \begin{itemize}
\item[(i)] $\tilde{s}_1<\frac{d\alpha_1}{r_1\alpha}<\frac{d}{r_1}<b+\frac{(\alpha_1+1)d}{r_1}<\tilde{s}_2+2$
\item[(ii)]$\frac{d\alpha_1}{2r_1}  + \frac{b}{2} < 1$
\item[(iii)] $\beta_1(\alpha_1+1)<1$.
\end{itemize}
\end{AL}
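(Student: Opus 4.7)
The plan is to reduce each of the three conditions (i), (ii), (iii) to a one-sided bound on $r_1$ (equivalently on $1/r_1$), and then to verify that the intersection of the resulting intervals is non-empty, using the two hypotheses $\alpha_1>\frac{2-b}{\tilde{s}_2+2}$ and $\alpha_1>\frac{\tilde{s}_1\alpha}{\tilde{s}_2+2-b-\tilde{s}_1\alpha}$ together with $\alpha<\frac{2-b}{\tilde{s}_1}$ in a precisely matching way.

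First I would peel off the trivial inequalities in (i): the middle one $\frac{d\alpha_1}{r_1\alpha}<\frac{d}{r_1}$ is automatic from $\alpha_1<\alpha$, and $\frac{d}{r_1}<b+\frac{(\alpha_1+1)d}{r_1}$ is automatic since $b\ge 0$ and $\alpha_1>0$. The genuine content of (i) is then
\[
\frac{(\alpha_1+1)d}{\tilde{s}_2+2-b}<r_1<\frac{d\alpha_1}{\tilde{s}_1\alpha}.
\]
A short computation shows (ii) is equivalent to $r_1>\frac{d\alpha_1}{2-b}$ (which also guarantees $\beta_1>0$), and (iii), namely $\frac{(2-b)(\alpha_1+1)}{2\alpha_1}-\frac{(\alpha_1+1)d}{2r_1}<1$, rearranges to
\[
r_1<\frac{d\alpha_1(\alpha_1+1)}{2-b(\alpha_1+1)}\quad\text{if}\ 2-b(\alpha_1+1)>0,
\]
and is vacuous otherwise. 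Hence, to choose $r_1$, it suffices to check
\[
\max\!\left(\frac{(\alpha_1+1)d}{\tilde{s}_2+2-b},\frac{d\alpha_1}{2-b}\right)<\min\!\left(\frac{d\alpha_1(\alpha_1+1)}{(2-b(\alpha_1+1))_+},\frac{d\alpha_1}{\tilde{s}_1\alpha}\right).
\]

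I would verify the four corresponding pairwise inequalities one by one. The comparison $\frac{d\alpha_1}{2-b}<\frac{d\alpha_1}{\tilde{s}_1\alpha}$ is exactly $\tilde{s}_1\alpha<2-b$, which is the hypothesis $\alpha<\frac{2-b}{\tilde{s}_1}$. The comparison $\frac{(\alpha_1+1)d}{\tilde{s}_2+2-b}<\frac{d\alpha_1}{\tilde{s}_1\alpha}$ cross-multiplies to $(\alpha_1+1)\tilde{s}_1\alpha<\alpha_1(\tilde{s}_2+2-b)$, i.e. $\alpha_1(\tilde{s}_2+2-b-\tilde{s}_1\alpha)>\tilde{s}_1\alpha$, which is precisely the hypothesis $\alpha_1>\frac{\tilde{s}_1\alpha}{\tilde{s}_2+2-b-\tilde{s}_1\alpha}$ (note that $\tilde{s}_2+2-b-\tilde{s}_1\alpha>0$ follows from $\alpha<\frac{2-b}{\tilde{s}_1}\le\frac{\tilde{s}_2+2-b}{\tilde{s}_1}$).

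For the remaining two, assume $2-b(\alpha_1+1)>0$ (otherwise the right-hand $\min$ is $\frac{d\alpha_1}{\tilde{s}_1\alpha}$ and there is nothing more to check). The comparison $\frac{d\alpha_1}{2-b}<\frac{d\alpha_1(\alpha_1+1)}{2-b(\alpha_1+1)}$ simplifies to $2<2(\alpha_1+1)$, which is immediate. Finally, $\frac{(\alpha_1+1)d}{\tilde{s}_2+2-b}<\frac{d\alpha_1(\alpha_1+1)}{2-b(\alpha_1+1)}$ cross-multiplies (after cancellation) to $2-b<\alpha_1(\tilde{s}_2+2)$, i.e. $\alpha_1>\frac{2-b}{\tilde{s}_2+2}$, which is the other hypothesis. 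I do not anticipate a main obstacle here: each of the two non-trivial assumptions on $\alpha_1$ is tailor-made to give exactly one of the two delicate comparisons, and $\alpha<\frac{2-b}{\tilde{s}_1}$ handles the third. With all four inequalities verified, the admissible interval for $r_1$ is non-empty, and any $r_1$ inside it satisfies (i)--(iii).
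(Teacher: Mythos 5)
Your proposal is correct and follows essentially the same route as the paper: it reduces (i)--(iii) to the two-sided constraint $\max\left(\frac{(\alpha_1+1)d}{\tilde{s}_2+2-b},\frac{d\alpha_1}{2-b}\right)<r_1<\min\left(\frac{d\alpha_1(\alpha_1+1)}{(2-b(\alpha_1+1))_+},\frac{d\alpha_1}{\tilde{s}_1\alpha}\right)$ and then checks non-emptiness from the hypotheses on $\alpha_1$ and $\alpha$. The only difference is that you spell out the four pairwise comparisons that the paper leaves as an unelaborated ``which is possible if'' claim, which is a welcome but not substantively different elaboration.
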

\begin{proof}[{\bf Proof}]
(i), (ii), (iii) are equivalent to $\frac{(\alpha_1+1)d}{\tilde{s}_2+2-b}<r_1<\frac{d\alpha_1}{\tilde{s}_1\alpha}$, $\frac{d\alpha_1}{2-b}<r_1$, $r_1<\frac{d\alpha_1(\alpha_1+1)}{2-b(\alpha_1+1)}$ if $2-b(\alpha_1+1)>0$. Then to make room for $r_1$, one thus needs \begin{equation*}
\begin{rcases}
\frac{(\alpha_1+1)d}{\tilde{s}_2+2-b}\\
\frac{d\alpha_1}{2-b}
\end{rcases}
<r_1<
\begin{cases}\frac{d\alpha_1}{\tilde{s}_1\alpha}\\
\frac{d\alpha_1(\alpha_1+1)}{2-b(\alpha_1+1)}
\end{cases}
\end{equation*}which is possible if $\max(\frac{2-b}{\tilde{s}_2+2},\frac{\tilde{s}_1\alpha}{\tilde{s}_2+2-b-\tilde{s}_1\alpha})<\alpha_1<\alpha<\frac{2-b}{\tilde{s}_1}$.
\end{proof}


\begin{AL}\label{a2}
Let $\max\left(\frac{2-b}{\tilde{s}_2+2},\frac{\tilde{s}_1\alpha}{\tilde{s}_2+2-b-\tilde{s}_1\alpha}\right)<\alpha_1<\alpha<\frac{2-b}{\tilde{s}_1}$.
Let $r_1$, $r_2$ be as in Lemma \ref{11}. Then 
\begin{itemize}
\item[(i)] $r_1<r_2$
\item[(ii)] $\tilde{s}_1<\frac{d}{r_1}<b +\frac{(\alpha_1+1)d}{r_1} <\tilde{s}_2+2$, $\tilde{s}_1<\frac{d}{r_2}<b +\frac{(\alpha+1)d}{r_2} <\tilde{s}_2+2$
\item[(iii)] $ \frac{d\alpha_1}{2r_1}  + \frac{b}{2} < 1$, $\frac{d\alpha}{2r_2}  + \frac{b}{2} < 1$
\item[(iv)] $\beta_1(\alpha_1+1)<1$, $\beta_2(\alpha+1)<1$
\end{itemize}
\end{AL}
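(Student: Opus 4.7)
The plan is to exploit the specific algebraic relation $r_2=(\alpha/\alpha_1)r_1$ and the parallel relation $\beta_2=(\alpha_1/\alpha)\beta_1$ (which one verifies at the outset by a one-line factoring in the definitions of $\beta_1$ and $\beta_2$). Once these two scaling identities are in place, each of the four items of the lemma reduces to a short algebraic step; the ``$r_1$-versions'' come directly from Lemma \ref{a1}, and the ``$r_2$-versions'' follow by combining the scaling identities with the inequality $\alpha_1<\alpha$.

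For item (i), I would simply note that $\alpha/\alpha_1>1$ gives $r_2>r_1$ at once. For the first half of (ii), (iii), (iv) there is nothing to do: these are precisely conclusions (i)--(iii) of Lemma \ref{a1}. For the second half of (iii), the substitution $d\alpha/(2r_2)=d\alpha_1/(2r_1)$ makes the statement identical to the first half. For the second half of (iv), I would write
\[
\beta_2(\alpha+1)=\tfrac{\alpha_1}{\alpha}\beta_1(\alpha+1)=\beta_1\!\left(\alpha_1+\tfrac{\alpha_1}{\alpha}\right)<\beta_1(\alpha_1+1)<1,
\]
using $\alpha_1<\alpha$ in the penultimate step and the first half of (iv) in the last.

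The only item that requires a little care is the second half of (ii). The leftmost inequality $\tilde s_1<d/r_2=d\alpha_1/(r_1\alpha)$ is exactly the leftmost inequality in Lemma \ref{a1}(i), and the middle inequality $d/r_2<b+(\alpha+1)d/r_2$ is trivial. For the rightmost one, I would substitute $r_2=(\alpha/\alpha_1)r_1$ to reduce it to $b+(\alpha+1)\alpha_1 d/(\alpha r_1)<\tilde s_2+2$, and then observe that since $\alpha_1<\alpha$,
\[
\tfrac{(\alpha+1)\alpha_1}{\alpha}=\alpha_1+\tfrac{\alpha_1}{\alpha}<\alpha_1+1.
\]
Combined with the bound $(\alpha_1+1)d/r_1<\tilde s_2+2-b$ coming from the lower bound $r_1>(\alpha_1+1)d/(\tilde s_2+2-b)$ guaranteed in Lemma \ref{11}, this closes the inequality.

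The main obstacle, to the extent there is one, is making the bookkeeping transparent: one must keep track of which bounds on $r_1$ come from which part of Lemma \ref{a1} / Lemma \ref{11}, and recognize the two scaling identities $r_2=(\alpha/\alpha_1)r_1$ and $\beta_2=(\alpha_1/\alpha)\beta_1$ that drive everything. Once these identities are isolated, the proof is essentially mechanical, and the inequality $\alpha_1<\alpha$ serves as the single qualitative input that upgrades the $r_1$-statements of Lemma \ref{a1} to the $r_2$-statements of the present lemma.
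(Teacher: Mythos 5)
Your proposal is correct and takes essentially the same route as the paper: the $r_1$-statements are read off from Lemma \ref{a1} (equivalently from the defining interval for $r_1$ in Lemma \ref{11}), and the $r_2$-statements follow from the scaling identities $r_2=\frac{\alpha}{\alpha_1}r_1$, $\beta_2=\frac{\alpha_1}{\alpha}\beta_1$ together with $\alpha_1<\alpha$. The only point left implicit is the positivity $\beta_1>0$ (guaranteed by $r_1>\frac{d\alpha_1}{2-b}$ in Lemma \ref{11}), which your monotonicity step in (iv) uses, just as the paper's proof does.
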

\begin{proof}[{\bf Proof}]
(i) is follows from $\alpha>\alpha_1$. 

First part of (ii) is a consequence of Lemma \ref{a1}. Note that $\frac{(\alpha+1)\alpha_1}{\alpha(\alpha_1+1)}<1$ and hence $\frac{(\alpha+1)}{r_2}<\frac{(\alpha_1+1)}{r_1}$. This together with $\tilde{s}_1<\frac{d\alpha_1}{r_1\alpha}$ 
imply the second part of (ii). 

(iii) follows from (i) and Lemma \ref{a1} (ii).

First one in (iv) is exactly part (iii) in Lemma \ref{a1}. Since $\beta_2(\alpha+1)=\beta_1(\alpha_1+1)\frac{(\alpha+1)\alpha_1}{\alpha(\alpha_1+1)}$ the last inequality in (iv) follows from the fact $\frac{(\alpha+1)\alpha_1}{\alpha(\alpha_1+1)}<1$.
\end{proof}

\begin{proof}[{\bf Proof of Lemma \ref{11}}]
(1) $\beta_1>0$ follows from $r_1>\frac{d\alpha}{2-b}$. Then $\beta_2,\beta_{12}>0$ follows from their definitions.

(2), (3), (4) are essentially consequences parts (ii), (iii), (iv) of Lemma \ref{a2} respectively.
 
 (5), (6) follows by simple computation.
\end{proof}


\noindent
{\textbf{Acknowledgement}:} D.G. B is thankful to DST-INSPIRE (DST/INSPIRE/04/2016/001507) for the research grant. S. H. acknowledges Dept of Atomic Energy, Govt of India, for the financial support and Harish-Chandra Research Institute for the research facilities provided. 
\bibliographystyle{siam}
\bibliography{BH-heat}
\end{document}